\newtheorem{thm}{Theorem}[section]
\newtheorem{lem}[thm]{Lemma}
\newtheorem{prop}[thm]{Proposition}
\newtheorem{defn}[thm]{Definition}
\newtheorem{rem}[thm]{Remark}
\newtheorem{que}[thm]{Question}
\newtheorem{exam}[thm]{Example}
\definecolor{darkgreen}{rgb}{0,0.5,0}
\definecolor{darkred}{rgb}{0.7,0,0}
\def\ba{\begin{array}}
\def\ea{\end{array}}
\def\be{\begin{equation}}
\def\ee{\end{equation}}
\def\bee{\begin{eqnarray}}
\def\beee{\begin{eqnarray*}}
\def\eee{\end{eqnarray}}
\def\eeee{\end{eqnarray*}}
\def\nn{\nonumber}
\title[Yang-Mills energy quantization]{%\sc
\bf Yang-Mills energy quantization over non-collapsed degenerating Einstein manifolds and applications}
\thanks{2020 Mathematics Subject Classification: 35B44, 53C07, 53C25}
\thanks{Key words and phrases: Yang-Mills; Einstein metrics; degeneration; quantization; singularities}
\thanks{The second author would like to thank Professor Aaron Naber for valuable comments on the theory of non-collapsed Einstein manifolds.}
\author{Youmin Chen}
\address{Department of Mathematics, Shantou University\\ 5 Cuifeng Road \\ Shantou, Guangdong, 515063 \\P. R. China}%
\email{youminchen@stu.edu.cn}%
\author{Miaomiao Zhu}
\address{School of Mathematical Sciences, Shanghai Jiao Tong University\\ 800 Dongchuan Road \\ Shanghai, 200240 \\P. R. China}%
\email{mizhu@sjtu.edu.cn}%
\date{\today}
\begin{document}
\maketitle
\begin{abstract}
We investigate a sequence of Yang-Mills connections $A_j$ lying in vector bundles $E_j$ over non-collapsed degenerating closed Einstein 4-manifolds $(M_j, g_ j)$ with uniformly bounded Einstein constants and bounded diameters. We establish a compactness theory modular three types of bubbles. As applications, we get some quantization results for several important topological number associated with the vector bundles, for instance, the first Pontrjagin numbers $p_1(E)$ of vector bundles over Einstein 4-manifolds and the Euler numbers $\chi(M;E)$ of holomorphic vector bundles over K\"{a}hler-Einstein surfaces. Furthermore, we get some quantization results about the volume $v(L_j)$ and certain cohomological numbers (e.g. $dim H^0(M_j;L_j)$) of holomorphic line bundles $L_j$ over non-collapsed degenerating K\"{a}hler-Einstein surfaces $(M_j,J_j,g_j)$ with the aid of the classical vanishing theorems, the classical Hirzebruch-Riemann-Roch type theorems, and the profound convergence theory of K\"{a}hler-Einstein manifolds. In particular, we obtain some interesting identities involving non-collapsed degenerating compact K\"{a}hler-Einstein surfaces with non-zero scalar curvature, which indicate that we can know the Euler number of $M_j$ for large $j$ provided some topological information of the limit orbifold $M_\infty$. For K\"{a}hler-Einstein Del Pezzo surfaces, an interesting implication is that we can provide some preliminary estimates for the number of singularities of various types in $M_\infty$ in an effective way. As an unexpected surprise, we find an identity which connects Milnor numbers for singularities in $M_\infty$ and the correction terms in the Hirzebruch-Riemann-Roch theorem for orbifolds.
Some results like the compactness modular bubbles, the quantization about the Euler numbers of holomorphic vector bundles and the volume of holomorphic line bundles can be extended to the case of higher dimensional $n$-manifolds by imposing some further $L^{\frac{n}{2}}$ bound assumptions on Riemannian curvature and Yang-Mills curvature.
\end{abstract}

\

\section{Introduction}

\vskip5pt

Yang-Mills theory is important in both mathematics and physics. The space of Yang-Mills connections has been investigated extensively in various situations, for instance, anti-self-dual connections on compact 4-manifolds e.g. \cite{Taubes1982Self,Taubes1984Self,geofourfolds,gaugefourtopology},  anti-self-dual connections on asymptotically periodic 4-manifolds e.g. \cite{GOMPF85,Taubes1987jdg}, self-dual connections on orbifolds e.g. \cite{freeorbifolds,1986Compactness}, Yang-Mills theory over quasiconformal 4-manifolds \cite{DS1989} and Yang-Mills instantons on gravitational instantons e.g. \cite{Kronheimer1990Yang,Nakajima1990}.

Since the analysis foundation of Yang-Mills theory was established in \cite{UhlenbeckRemovablesingu, UhlenbeckLp}, various geometric analysis aspects of Yang-Mills fields has been extensively studied. In particular, the energy quantization for Yang-Mills fields over a fixed 4-manifold  was developed in e.g. \cite{geofourfolds, gaugefourtopology, DS1989, Instanton4manifolds, RiviequantYM} and the case of higher dimensions was established in \cite{RiviequantYM,tian2000y, Naberyangenergy}.

In this paper, we shall firstly investigate the compactness problem for the moduli space of Yang-Mills fields over non-collapsed varying Einstein 4-manifolds, and then explore some applications to the geometry and topology of Einstein 4-manifolds. In a previous work \cite{CZdegebiharm}, we established the energy quantization and bubble tree convergence for biharmonic maps from non-collapsed degenerating Einstein 4-manifolds into compact Riemannian manifolds.

Let $A$ be a connection in a vector bundle $E$ over a Riemannian manifold $(M,g)$ with compact structure group $G\subset SO(r)$ for some $r\in \mathbb{Z}_{+}$. A Yang-Mills connection $A$ is a critical point of the Yang-Mills energy functional
\begin{equation}\label{ymfunct}
\mathcal{YM}(A)=\int_M| F_A|^2 dV_g,
\end{equation}
where $F_A$ is the curvature form. The Euler-Lagrange equation is
\begin{equation}\label{ymequintro}
D_{A}^{*}F_A=0.
\end{equation}

Let $(M_j ,g_j)$ be a sequence of closed Einstein 4-manifolds with uniformly bounded Einstein constants $|\mu_j|\leq \mu$ and satisfying
\begin{equation}\label{einstincondintr}
diam(M_j ,g_j)\leq D, \,\, vol(M_j ,g_j)\geq V
\end{equation}
for some positive constants $D>0, V>0$.
By the profound work \cite{cheeger2015regularity} (see also \cite{jiangnaber21} for higher dimensional case and \cite{CJN2021,Anderson94icm,CCT2002,Cheeger2003gafa,ct05cmp} for related works),
$(M_j ,g_j)$ have a priori $L^2$ Riemannian curvature estimates
\begin{equation}\label{cheegernaberjiang}
\int_{M_j}|Rm(g_j)|^2 dV_{g_j}\leq R(\mu,D,V),
\end{equation}
where $Rm(g)$ denotes the Riemann curvature tensor of the metric $g$, and $R(\mu,D,V)$ is a positive constant depending on $\mu, \,D$ and $V$. Let $A_j$  be a sequence of Yang-Mills connections in vector bundles $E_j$ with structure group $G$ over the Einstein 4-manifolds $(M_j, g_j)$ such that the Yang-Mills energy is uniformly bounded, namely,
\begin{equation}\label{ymbounds}
\mathcal{YM}(A_j)\equiv\int_{M_j}|F_{A_j}|^2 dV_{g_j}\leq C
\end{equation}
for some finite $C> 0$.

By the classical compactness theory of non-collapsed Einstein manifolds e.g. \cite{nakajima1988hausdorff,anderson1989ricci, BKN, Tian1990,  bando1990bubbling, Anderson1992,Anderson94icm,  nakajima1994convergence,Jeff2006Curvature,cheeger2015regularity}, we know that the underlying Einstein 4-manifolds $(M_j ,g_j)$ possesses a nice bubble tree convergence (up to a subsequence), where the bubbles are Ricci flat ALE (\textit{Asymptotically Locally Euclidean}) manifolds (orbifolds), see Theorem \ref{mainconvgethm} in Section \ref{Preliminaries111} for a detailed description. Then through detailed blow-up analysis of Yang-Mills connections $A_j$ in $E_j$ over $M_j$ (see Section \ref{treeidentity}), we get that $A_j$ converges (up to a subsequence still denoted by $A_j$) to a limit Yang-Mills connection
$A_\infty$ in some vector bundle $E_\infty$ over $M_\infty$ modular finitely many Yang-Mills bubble connections.

On one hand, the sequence of Yang-Mills connections $A_j$ may blow up at some points $b_l, l=1, \cdots, v$ which are away from the orbifold singularities of $M_\infty$, and the corresponding bubble connections are Yang-Mills connections in bundles over $\mathbb{R}^4$, denoted by $A_{\omega_{b_l,k}}, k=1,\cdots, N_{b_l}$.

On the other hand, the sequence of $A_j$ can possibly blow up at some orbifold singularities $a_l, l=1,\cdots, u$ of $M_{\infty}$, then, in general, we may obtain the following three types of bubble connections:

\

\begin{itemize}
\item[{\bf Type $\alpha$}:] Yang-Mills connections in bundles over $\mathbb{R}^4$, denoted by $A_{\omega^{a_l,\alpha,k}},  \ k=1,\cdots, N_{a_l,\alpha}$; \\
\item[{\bf Type $\beta$}:] Yang-Mills connections in bundles over some complete, Ricci flat, non-flat ALE  4-manifolds (orbifolds), denoted by $A_{\omega^{a_l,\beta,k}}$, $\ k=1,\cdots, N_{a_l,\beta}$;\\
\item[{\bf Type $\gamma$}:] Yang-Mills connections in bundles over $\mathbb{R}^4 / \Gamma$ for some nontrivial finite group $\Gamma \subset SO(4)$, denoted by $A_{\omega^{a_l,\gamma,k}},  \ k=1,\cdots, N_{a_l,\gamma}$.\\
\end{itemize}

Now we state the energy quantization result for Yang-Mills connections over non-collapsed degenerating Einstein 4-manifolds.

\begin{thm}\label{degeymenergyintr} Let $(M_j,g_j, E_j, A_j)$ be as above. Then we have the following identity:

\begin{eqnarray*}
\lim_{j\rightarrow \infty}\int_{M_j}|F_{A_j}|^2 dV_{g_j} &=& \int_{M_\infty}|F_{A_\infty}|^2dV_{g_\infty} +\sum_{l=1}^{v}\sum_{k=1}^{N_{b_l}}\mathcal{YM}(\mathbb{R}^4, A_{\omega_{b_l,k}})  \\
&& +\sum_{l=1}^{u}\sum_{\eta=\alpha,\beta,\gamma}\sum_{k=1}^{N_{a_l,\eta}} \mathcal{YM}(X_{\eta,k,a_l}, A_{\omega^{a_l,\eta,k}}),
\end{eqnarray*}
where $\mathcal{YM}(X, A)$ is the Yang-Mills energy of the connection $A$ over the underlying space $X$. Here $X_{\alpha,k,a_l}$ is $\mathbb{R}^4$, $X_{\beta,k,a_l}$ is a Ricci flat ALE bubble space and $X_{\gamma,k,a_l}$ is of the form $\mathbb{R}^4/\Gamma$ for some nontrivial finite group $\Gamma \subset SO(4)$.
\end{thm}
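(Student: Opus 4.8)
\emph{Sketch of the approach.} The plan is to combine the convergence theory for the underlying Einstein 4-manifolds (Theorem~\ref{mainconvgethm}) with Uhlenbeck's Coulomb gauge and $\eps$-regularity for Yang-Mills connections \cite{UhlenbeckRemovablesingu, UhlenbeckLp}, and then to rule out energy loss in the neck regions. Along a subsequence, fix the geometric bubble-tree data for $(M_j,g_j)$ given by Theorem~\ref{mainconvgethm}: the limit orbifold $(M_\infty,g_\infty)$, its singular set $\{a_1,\dots,a_u\}$, and the scales at which the Ricci flat ALE bubbles form; by the a priori bound \eqref{cheegernaberjiang} the convergence $M_j\to M_\infty$ is smooth away from $\{a_l\}$. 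Let $\eps_0>0$ be the $\eps$-regularity threshold for Yang-Mills connections, and define the energy-concentration set as the finite collection of points $x$ with $\liminf_{j}\int_{B_r(x)}|F_{A_j}|^2\,dV_{g_j}\ge \eps_0$ for every $r>0$; the points of this set that are regular points of $M_\infty$ are the $b_l$'s, and finiteness of both sets follows from \eqref{ymbounds}. On compact subsets of $M_\infty\setminus(\{a_l\}\cup\{b_l\})$, the small-energy hypothesis, Uhlenbeck gauge-fixing, and elliptic estimates for the Yang-Mills equation \eqref{ymequintro} give, modulo gauge and up to a further subsequence, smooth local convergence $A_j\to A_\infty$ to a Yang-Mills connection $A_\infty$ in a bundle $E_\infty$ over $M_\infty$; this yields the term $\int_{M_\infty}|F_{A_\infty}|^2\,dV_{g_\infty}$.

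\emph{Extraction of bubbles.} Around each concentration point I would run the usual iterated rescaling. Near a point $b_l\notin\{a_l\}$ the rescaled metrics $\lambda_j^{-2}g_j$ converge smoothly to the Euclidean metric on $\mathbb{R}^4$, and a point-selection argument extracts finitely many nontrivial Yang-Mills connections $A_{\omega_{b_l,k}}$ over $\mathbb{R}^4$, each of energy $\ge\eps_0$. Near an orbifold point $a_l$ the geometry itself degenerates, and the Yang-Mills concentration scale(s) $\delta_j\to0$ must be compared with the geometric scales of Theorem~\ref{mainconvgethm}: if $\delta_j$ is much smaller than the relevant geometric scale the rescaled ambient metric is asymptotically flat and one obtains a Type~$\alpha$ bubble over $\mathbb{R}^4$; if $\delta_j$ is comparable to the scale of a geometric bubble the rescaled metric converges to a Ricci flat ALE metric and one obtains a Type~$\beta$ bubble; if $\delta_j$ dominates all geometric bubble scales, so that at scale $\delta_j$ one sees the orbifold cone directly, the rescaled metric converges to $\mathbb{R}^4/\Gamma$ and one obtains a Type~$\gamma$ bubble. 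Every extracted bubble carries energy $\ge\eps_0$, so by \eqref{ymbounds} the process produces only finitely many bubbles and terminates; this part is carried out in Section~\ref{treeidentity}.

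\emph{No neck energy --- the main obstacle.} The crux is to show that no Yang-Mills energy escapes into the neck regions interpolating between successive scales (between $M_\infty$ and a bubble, or between two bubbles). Concretely one must establish
\[
\lim_{\Lambda\to\infty}\ \limsup_{j\to\infty}\ \int_{B_{\Lambda^{-1}}(x)\setminus B_{\Lambda\delta_j}(x)}|F_{A_j}|^2\,dV_{g_j}=0
\]
in the rescaled coordinates around each concentration point $x$, together with its analogue between intermediate scales. I would decompose each neck into dyadic annuli; on these annuli the energy is below $\eps_0$ by construction, so $\eps$-regularity gives uniform curvature bounds, and --- using \eqref{cheegernaberjiang} and Theorem~\ref{mainconvgethm} --- the rescaled metric is on each dyadic annulus uniformly close to a flat, flat-cone, or ALE model. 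Uhlenbeck's Coulomb gauge then applies annulus by annulus, and the Yang-Mills equation \eqref{ymequintro}, combined with a three-annulus argument on the neck, yields geometric decay of the energy along the neck, which sums to the stated vanishing. The genuinely delicate cases occur at the orbifold points, where several scales and bubble types coexist: the neck between $M_\infty$ and a Type~$\gamma$ bubble runs over the flat cone $\mathbb{R}^4/\Gamma$, so one needs the removable-singularity and decay analysis of Yang-Mills fields on orbifold cones; and when Type~$\beta$ bubbles are present the neck must be further subdivided at the ALE scales, requiring an inductive peeling of the tree that at each stage controls the remaining energy in the intermediate neck over the degenerating Einstein background. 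I expect this multi-scale no-neck-energy estimate, and in particular the interplay of geometric and Yang-Mills bubbling at the orbifold singularities, to be the main difficulty.

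\emph{Conclusion.} Given the body convergence, the finite bubble list, and vanishing neck energy, I would partition $M_j$, for $j$ and $\Lambda$ large, into a region converging to $M_\infty$ with small balls about the concentration points removed, finitely many rescaled bubble regions, and neck regions. The first region contributes $\int_{M_\infty}|F_{A_\infty}|^2\,dV_{g_\infty}$ in the limit $\Lambda\to\infty$, since the energy of $A_\infty$ over shrinking balls about the concentration points tends to $0$; each bubble region contributes the corresponding $\mathcal{YM}(\mathbb{R}^4,A_{\omega_{b_l,k}})$ or $\mathcal{YM}(X_{\eta,k,a_l},A_{\omega^{a_l,\eta,k}})$ by the rescaled convergence; and the neck regions contribute $0$. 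A diagonal argument over $\Lambda\to\infty$ completes the proof.
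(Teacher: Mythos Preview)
Your overall strategy---bubble-tree decomposition based on Theorem~\ref{mainconvgethm}, the trichotomy of bubble types $\alpha,\beta,\gamma$ at orbifold points according to the ratio of Yang-Mills and geometric scales, and reduction to a no-neck-energy statement---matches the paper's architecture closely, and your bubble extraction and finiteness arguments are essentially as in Section~\ref{treeidentity}.

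Where you diverge is in the mechanism for the no-neck-energy estimate. You propose annulus-by-annulus Coulomb gauge plus a three-annulus argument applied directly to the Yang-Mills equation. The paper instead avoids gauge-fixing on the neck entirely and works with the gauge-invariant scalar $|F_{A_j}|$: from the Bochner--Weitzenb\"ock formula together with a refined Kato inequality (Lemma~\ref{katoinequ}) it derives (Lemma~\ref{keyymequ})
\[
\Delta_j |F_{A_j}|^{1/2} \ \ge\ -C\,|F_{A_j}|^{1/2}\bigl(|F_{A_j}|+|Rm(g_j)|\bigr),
\]
and then feeds this into Bando's elliptic lemmas (Lemmas~\ref{ellestlem1}, \ref{ellestlem2}, \ref{c0normest0}) with $u=|F_{A_j}|^{1/2}$ and potential $f=C(|F_{A_j}|+|Rm(g_j)|)$. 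The exponent $1/2$ is not cosmetic: it puts $u$ in $L^4$ with $4>\gamma=\tfrac{n}{n-2}=2$, which is precisely the threshold required for the decay lemma (Lemma~\ref{ellestlem2}) to bite; applying the same scheme to $|F_{A_j}|$ itself in $L^2$ would fail at this step. The pointwise Riemannian-curvature decay (2.c) of Theorem~\ref{mainconvgethm} is then used to verify that $f$ has small $L^2$ norm on each dyadic annulus and satisfies \eqref{coeffdecay}, which yields \eqref{keydecayYM} and hence Theorem~\ref{degenockenergy}.

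Your Coulomb-gauge route is in principle viable---it is closer in spirit to Rivi\`ere's approach \cite{RiviequantYM} over a fixed background---but you would still need to patch gauges across annuli over a degenerating metric and to track how the unbounded Riemannian curvature on the neck enters the Yang-Mills system as a coefficient, not merely via closeness of $g_j$ to a model metric on each annulus. The paper's scalar-inequality approach buys exactly these two things: it is gauge-free, and the ambient curvature appears transparently as part of the potential $f$, whose smallness is controlled directly by the pointwise estimate (2.c) rather than only by the $L^2$ bound \eqref{cheegernaberjiang} you invoke.
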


As an application of Theorem \ref{degeymenergyintr}, we shall derive a quantization result for the first Pontrjagin number $p_1(E)$ for vector bundles over a sequence of non-collapsed Einstein 4-manifolds (see Theorem \ref{degeymenergycor}) provided the existence for Yang-Mills connections.
It is well known that the general existence of Yang-Mills connections is a hard problem, however, there has been a lot of progress in this field, to name a few, the existence of (anti-)self-dual connections \cite{AHDM1978,AtiyahSelf,Taubes1982Self,Taubes1984Self,geofourfolds,1985Anti,UY1986,Donaldson87bundle} etc, and the existence of non-minimal Yang-Mills connections \cite{Uhlenbeck1989,SadunSegert92,parkerinvention92} etc.

Furthermore, we get a quantization result for the Euler numbers of holomorphic vector bundles $\chi(M;E)$ over K\"{a}hler-Einstein surfaces (see Theorem \ref{appli33}) by applying Theorem \ref{degeymenergyintr} and the classical Riemannian curvature energy identity for non-collapsed Einstein 4-manifolds(real dimension) (see Theorem \ref{mainconvgethm}). Note that the quantity $\chi(M; E)$ is closely related to the famous Hirzebruch-Riemann-Roch theorem. This quantization result is widely applicable. On one hand, so far we have gained a deep understanding about the existence of K\"{a}hler-Einstein metrics on closed K\"{a}hler manifolds by magnificent work in the last decades, see \cite{yau78calabi} for the existence of Ricci flat K\"{a}hler metrics, see \cite{AubinEinstein76,yau78calabi,caoeinstein85} for the existence of K\"{a}hler-Einstein metrics with negative scalar curvature, and see e.g.
\cite{yauopenprob,tianyau87,Tian1990,tianinvt1997,Donaldson2002Scalar,CDS1,CDS2,CDS3,tiancpam15,sze2013jams,BBJ2021,ChenWang12fano,2016tzacta} for the existence of K\"{a}hler-Einstein metrics with positive scalar curvature. On the other hand, we have acquired some beautiful existence results of Hermitian-Yang-Mills connections in stable holomorphic vector bundles over K\"{a}hler manifolds, see e.g. \cite{1985Anti,UY1986,Donaldson87bundle}. Therefore, this quantization of $\chi(M;E)$ applies to a large class of stable holomorphic vector bundles over K\"{a}hler surfaces.

Finally, by integrated utilization of the following results:
\begin{itemize}
\item[(1)] the classical vanishing theorems \cite{Kodaira1953,mumfordbook66,vanish85book,vanishbook92,holobundle96book},

\item[(2)] the classical Hirzebruch-Riemann-Roch type theorems %(a special case of the Atiyah-Singer index theorem)
    for holomorphic vector bundles over complex manifolds (orbifolds) \cite{Hirzebruchbook1,HRRthmorbifold79,spingeobook89,BlacheChern96},
\item[(3)] the profound convergence theory of K\"{a}hler-Einstein manifolds developed in \cite{Tian1990, tian90icm, CCT2002, tian13c0, DonaldsonSunacta, DonaldsonSunjdg},
\item[(4)] the quantization for the Euler number $\chi(M;E)$ of holomorphic vector bundles over K\"{a}hler-Einstein surfaces (see Theorem \ref{appli33} in Section \ref{apllisec}),
\end{itemize}
we can get the following quantization results:
\begin{thm}\label{intromainapl33}
Let $(M_j, J_j,g_j)$ be a sequence of closed K\"{a}hler-Einstein surfaces with positive (or negative) scalar curvature and with uniformly bounded diameters and Einstein constants. Without loss of generality, we normalize the metrics so that $Ric(g_j)=\pm g_j$.
Let $L_j$ be a holomorphic line bundle over $M_j$, and $A_j$ be a Hermitian-Yang-Mills connection in $L_j$ with uniform $L^2$ Yang-Mills curvature bound. If $(M_j,J_j, g_j, L_j,A_j)$ converges modular bubble connections to some limit
$(M_\infty, J_\infty, g_\infty, L_\infty,A_\infty)$ up to a subsequence, then
\begin{eqnarray*}
\lim_{j\rightarrow \infty}\chi(M_j;L_j) =\chi(M_\infty;L_\infty) -\sum_{l=1}^{u}\mu_{M_{\infty},a_l}(L_\infty) +\sum_{l=1}^{u}\sum_{k=1}^{N_{a_l,\beta}}\bar{\chi}(X_{\beta,k,a_l},A_{\omega^{a_l,\beta,k}}),
\end{eqnarray*}
where $\{a_1,\cdots,a_u\}$ is the set of orbifold singularities in $M_\infty$, $X_{\beta,k,a_l}$ is a Ricci flat K\"{a}hler ALE bubble manifold (orbifold) at the singularity $a_l$, $\bar{\chi}(X,A)$ denotes a quantity related to the bubble connection $A$ in some bundle over the ALE bubble space $X$, and $\mu_{M_{\infty},a_l}(L_\infty)$ is a rational number related to the local structure of the
bundles at the singularity $a_l$.

\begin{itemize}
\item[I.] If $L_j^{*}$ and $L_\infty^{*}$ are ample, then
\begin{eqnarray*}
\lim_{j\rightarrow \infty} dim \ H^{2}(M_j;L_j)= dim\ H^{2}(M_\infty;L_\infty) &-&\sum_{l=1}^{u}\mu_{M_{\infty},a_l}(L_\infty)  \\ &+&\sum_{l=1}^{u}\sum_{k=1}^{N_{a_l,\beta}}\bar{\chi}(X_{\beta,k,a_l},A_{\omega^{a_l,\beta,k}}).
\end{eqnarray*}

\item[II.] If $K_{M_j}^{-1}\otimes L_j$ and $K_{M_{\infty}}^{-1}\otimes L_\infty$ are ample, then
\begin{eqnarray*}
\lim_{j\rightarrow \infty}dim\ H^{0}(M_j;L_j)= dim \ H^{0}(M_\infty;L_\infty) &-& \sum_{l=1}^{u}\mu_{M_{\infty},a_l}(L_\infty)  \\ &+&\sum_{l=1}^{u}\sum_{k=1}^{N_{a_l,\beta}}\bar{\chi}(X_{\beta,k,a_l},A_{\omega^{a_l,\beta,k}});
\end{eqnarray*}
\textbf{or equivalently}, if $L_j$ and $L_\infty$ are ample, and $A_j$ is a sequence of Hermitian-Yang-Mills connections in $K_{M_j}\otimes L_j$ which converge similarly as above, then
\begin{eqnarray}\label{h0111}
\lim_{j\rightarrow \infty}dim\ H^{0}(M_j;K_{M_j}\otimes L_j)&= &dim \ H^{0}(M_\infty;K_{M_\infty}\otimes L_\infty) \\ && -\sum_{l=1}^{u}\mu_{M_{\infty},a_l}(K_{M_\infty}\otimes L_\infty)  \nn \\
&&+\sum_{l=1}^{u}\sum_{k=1}^{N_{a_l,\beta}}\bar{\chi}(X_{\beta,k,a_l},A_{\omega^{a_l,\beta,k}}).\nonumber
\end{eqnarray}
\end{itemize}

\end{thm}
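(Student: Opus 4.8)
The plan is to deduce Theorem~\ref{intromainapl33} from the energy quantization of Theorem~\ref{degeymenergyintr}, the Riemannian curvature energy identity of Theorem~\ref{mainconvgethm}, and the quantization for the Euler number $\chi(M;E)$ of holomorphic vector bundles over K\"ahler-Einstein surfaces (Theorem~\ref{appli33}), combined with the classical Hirzebruch-Riemann-Roch theorem for surfaces and orbifolds, the Kodaira-type vanishing theorems, and the convergence theory of K\"ahler-Einstein surfaces. First I would record the Hirzebruch-Riemann-Roch formula $\chi(M;L)=\frac{1}{2}\bigl(c_1(L)^2-c_1(L)\cdot c_1(K_M)\bigr)+\chi(M;\mathcal{O}_M)$ on each smooth $M_j$, and note that $c_1(L_j)^2$ is expressible through the Yang-Mills energy of the Hermitian-Yang-Mills connection $A_j$ (the curvature $F_{A_j}$ representing $c_1(L_j)$, with the anti-self-dual/self-dual split giving $p_1$-type integrals), while the mixed term $c_1(L_j)\cdot c_1(K_{M_j})$ and the holomorphic Euler characteristic $\chi(M_j;\mathcal{O}_{M_j})=\frac{1}{12}(c_1(M_j)^2+c_2(M_j))$ are governed by the Chern-Gauss-Bonnet and signature integrands of $g_j$, hence by the Riemann curvature energy $\int_{M_j}|Rm(g_j)|^2$.

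The core step is then to feed each of these curvature integrals through its respective quantization identity and match up the bubble contributions. Theorem~\ref{degeymenergyintr} gives $\lim_j \mathcal{YM}(A_j)=\mathcal{YM}(A_\infty)+\sum(\text{Type }\alpha,\beta,\gamma\text{ bubble energies})$, and the parallel identity from Theorem~\ref{mainconvgethm} gives $\lim_j\int_{M_j}|Rm(g_j)|^2 = \int_{M_\infty}|Rm(g_\infty)|^2 + \sum_{l,k}\int_{X_{\beta,k,a_l}}|Rm|^2$ over the Ricci-flat ALE bubbles. Crucially, on $\mathbb{R}^4$ and on $\mathbb{R}^4/\Gamma$ the Yang-Mills bubble energy for a \emph{line} bundle (abelian connection) over a space with vanishing $H^2$ contributes nothing beyond a topological count — indeed over $\mathbb{R}^4$ there is no non-trivial topology, and over $\mathbb{R}^4/\Gamma$ the relevant contribution is the local orbifold correction term $\mu_{M_\infty,a_l}(L_\infty)$, which is exactly the $L$-dependent piece of the Blache/orbifold Hirzebruch-Riemann-Roch defect at $a_l$. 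Thus I would write the orbifold HRR for $(M_\infty, L_\infty)$ with its singularity corrections $\sum_l \sigma_{a_l}$, split $\sigma_{a_l}$ into an $L$-independent part (absorbed by the curvature-energy identity via the ALE bubbles $X_{\beta,k,a_l}$, collected into $\bar\chi(X_{\beta,k,a_l},A_{\omega^{a_l,\beta,k}})$) and an $L$-dependent part $\mu_{M_\infty,a_l}(L_\infty)$, and assemble the three quantization identities algebraically to obtain the stated formula for $\lim_j\chi(M_j;L_j)$.

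For parts I and II I would invoke the vanishing theorems to collapse $\chi(M;L)$ onto a single cohomology group. Under the hypothesis that $L_j^*$ and $L_\infty^*$ are ample, Serre duality turns $H^2(M;L)$ into $H^0(M;K_M\otimes L^*)$ and Kodaira vanishing (applied on the smooth $M_j$ and, in the appropriate orbifold form, on $M_\infty$) kills $H^0(M;L)$ and $H^1(M;L)$, so $\chi(M;L)=\dim H^2(M;L)$ up to sign handling; the identity for $\chi$ then transfers verbatim to $\dim H^2$. The case $K_{M_j}^{-1}\otimes L_j$ ample is symmetric, forcing $H^1=H^2=0$ so that $\chi=\dim H^0$, and the ``equivalently'' reformulation is just the substitution $L\rightsquigarrow K_M\otimes L$ in the first version, using $\chi(M;K_M\otimes L)=\chi(M;L^*)$ style bookkeeping and Serre duality once more. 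The main obstacle I anticipate is \textbf{the precise identification of the orbifold correction terms}: showing that the $L$-dependent part of the Blache-type local defect at each $a_l$ is genuinely captured by the Type $\gamma$ (and residual Type $\beta$) Yang-Mills bubble data and equals the claimed rational number $\mu_{M_\infty,a_l}(L_\infty)$ depending only on the local bundle structure — this requires carefully analyzing the restriction of $A_\infty$ and the bubble connections near the orbifold point, matching the holonomy/isotropy representation of $\Gamma$ on the fiber of $L_\infty$ with the Dedekind-sum-type contributions in orbifold HRR, and verifying that the ALE-bubble pieces separate cleanly into the $L$-independent curvature part and nothing else. A secondary technical point is ensuring the vanishing theorems hold in the orbifold category for $M_\infty$ with the stated ampleness assumptions, which I would handle via the standard Baily/Kodaira vanishing for $V$-manifolds.
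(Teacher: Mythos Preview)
Your overall architecture is right --- the proof does reduce to the HRR-integrand quantization (the paper's Theorem~\ref{appli33}/\ref{appli44}), the orbifold HRR for $M_\infty$, and vanishing theorems --- but you have misidentified where the correction term $\mu_{M_\infty,a_l}(L_\infty)$ comes from, and this is the crux of the argument.

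You write that ``over $\mathbb{R}^4/\Gamma$ the relevant contribution is the local orbifold correction term $\mu_{M_\infty,a_l}(L_\infty)$,'' and you frame the main obstacle as matching Type~$\gamma$ bubble data with $\mu$. This is backwards. In the paper's argument, the Type~$\alpha$ and Type~$\gamma$ bubble contributions $\bar\chi(\mathbb{C}^2,A)$ and $\bar\chi(\mathbb{C}^2/\Gamma,A)$ are both \emph{zero} for line bundles: since these spaces are flat, $\bar\chi$ reduces to $-\frac{1}{8\pi^2}\int F_A\wedge F_A$; one then uses removable singularity to extend (a lift of) the connection to a line bundle on $S^4$, and since $H^2(S^4;\mathbb{Z})=0$ every such line bundle is trivial, forcing the integral to vanish. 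The term $\mu_{M_\infty,a_l}(L_\infty)$ does not arise from any bubble at all --- it comes purely from applying Blache's orbifold HRR on the limit space:
\[
\chi(M_\infty;L_\infty)=\int_{M_\infty}[ch(L_\infty)\,td(TM_\infty)]_2 + \sum_l \mu_{M_\infty,a_l}(L_\infty).
\]
Substituting this into the quantization identity $\lim_j\int_{M_j}[\,\cdot\,]_2=\int_{M_\infty}[\,\cdot\,]_2+\sum(\text{bubbles})$ and discarding the vanishing Type~$\alpha,\gamma$ terms gives the stated formula directly (this is the paper's Theorem~\ref{appli55}). There is no splitting of a defect into $L$-dependent and $L$-independent pieces, and no need to match holonomy data with Dedekind sums --- $\mu$ is simply the orbifold-HRR correction, taken as a black box from \cite{BlacheChern96}.

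For parts I and II your plan is correct and matches the paper: the convergence theory of Tian and Donaldson--Sun guarantees that $M_\infty$ is a normal projective variety with only rational (quotient) singularities, so the vanishing theorem of Theorem~\ref{vanishingthm00} applies on $M_\infty$ as well as on the smooth $M_j$, collapsing $\chi$ to the single surviving cohomology group in each case.
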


We remark that the quantity $\mu_{M_\infty,a_l}(\cdot)$ in the above theorem is related to the classical Hirzebruch-Riemann-Roch theorem for orbifolds. Let $O$ be a compact $m$-dimensional complex orbifold with isolated singularities $o_1, \cdots, o_s$, and $E$ be a holomorphic vector bundle over $O$. Then by results in \cite{BlacheChern96},
\begin{eqnarray*}
\chi(O;E) = \int_{O}[ch(E)td(TO)]_m +\sum_{k=1}^{s}\mu_{O,o_k}(E),
\end{eqnarray*}
where
 the correction terms $\mu_{O,x_k}(E)$ are rational numbers related to the local structure of the bundles at the singularities, and they are computable in principle as indicated in Section 3 of \cite{BlacheChern96}. The formula in the above is very similar to the Noether formula for
singular surfaces where the Milnor numbers of singularities arise naturally, see Proposition 2.6 of \cite{HPdel}.

In particular, with the help of important and powerful results in \cite{Tian1990,tian13c0,DonaldsonSunacta} etc., which indicate that under the
degeneration for K\"{a}hler-Einstein surfaces with positive (or negative) scalar curvature, for all $p\geq1$ the dimensions
$$dim \ H^{0}(X;K^{-p}_X)$$
(or $dim \ H^{0}(X;K^{p}_X)$, respectively) are invariant, we get the following identities about Euler numbers.
\begin{thm}\label{mainaplintr11}
Let $(M_j, J_j,g_j)$ be a sequence of closed K\"{a}hler-Einstein surfaces with positive (or negative) scalar curvature as in Theorem \ref{intromainapl33}. Suppose that $(M_j, J_j,g_j)$ converges up to a subsequence to a limit space $(M_\infty, J_\infty,g_\infty)$ modular Ricci flat  K\"{a}hler ALE bubble manifolds (orbifolds) $X_1,\cdots,X_v$. Then
\begin{eqnarray*}\label{geoapllpositive}
\sum_{k=1}^{v}\frac{1}{8\pi^2}\int_{X_k}|Rm|^2dvol
=12\sum_{l=1}^{u}\mu_{M_{\infty},a_l}(K_{M_{\infty}}^{-1}),
\end{eqnarray*}
and hence
\begin{eqnarray*}\label{geoapllpositive}
\lim_{j\rightarrow \infty}\chi(M_j)
=\chi_{orb}(M_\infty)
+12\sum_{l=1}^{u}\mu_{M_{\infty},a_l}(K_{M_{\infty}}^{-1})\nonumber
\end{eqnarray*}
when the scalar curvature is positive;
and
 \begin{eqnarray*}\label{geoapllpositive}
\sum_{k=1}^{v}\frac{1}{8\pi^2}\int_{X_k}|Rm|^2dvol
=12\sum_{l=1}^{u}\mu_{M_{\infty},a_l}(K_{M_{\infty}}^{2}),
\end{eqnarray*}
and hence
\begin{eqnarray*}
\lim_{j\rightarrow \infty}\chi(M_j)=\chi_{orb}(M_\infty)
+12\sum_{l=1}^{u}\mu_{M_{\infty},a_l}(K_{M_{\infty}}^{2})
\end{eqnarray*}
when the scalar curvature is negative.
Here $\{a_1,\cdots,a_u\}$ is the set of orbifold singularities of $M_\infty$, $\chi_{orb}(M_\infty)$ is the orbifold Euler number of $M_\infty$, and $\mu_{M_{\infty},a_l}(\cdot)$ is a rational number related to the singularity $a_l$ as above.
\end{thm}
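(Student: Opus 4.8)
The plan is to apply the Euler-number quantization (Theorem~\ref{intromainapl33}, which rests on Theorem~\ref{appli33}) to the \emph{canonical bundle itself}, to evaluate the resulting bubble contributions explicitly using that the bubbles are Ricci-flat K\"{a}hler ALE spaces, and then to feed in the rigidity of (anti-)plurigenera along K\"{a}hler--Einstein degenerations (Tian, Donaldson--Sun) together with classical (orbifold) Kodaira--Kawamata--Viehweg vanishing; the second identity will then follow from the first by combining the Riemannian curvature energy identity of Theorem~\ref{mainconvgethm} with the (orbifold) Chern--Gauss--Bonnet formula. I work out the positive scalar curvature case. The negative case is \emph{verbatim} after replacing $K_{M_j}^{-1}$ by $K_{M_j}^{2}$: the Hirzebruch--Riemann--Roch integrands $[ch(K^{-1})td(T)]_2$ and $[ch(K^{2})td(T)]_2$ both equal $c_1(T)^2+\tfrac{1}{12}(c_1(T)^2+c_2(T))$; the bundle $K_{M_j}^{2}=K_{M_j}\otimes K_{M_j}$ has vanishing higher cohomology by Kodaira since $K_{M_j}$ is ample; and $\dim H^0(X;K_X^{2})$ is invariant along the degeneration (the $p=2$ case of the cited rigidity).

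Take $L_j:=K_{M_j}^{-1}$, which is ample since $(M_j,J_j,g_j)$ is K\"{a}hler--Einstein of positive scalar curvature, with $A_j$ the Chern connection of $g_j$; this connection is Hermitian--Yang--Mills (its curvature is the Ricci form, a constant multiple of $\omega_j$) with $\int_{M_j}|F_{A_j}|^2\,dV_{g_j}=c\cdot\mathrm{vol}(M_j,g_j)$ uniformly bounded by Bishop--Gromov, and since $A_j$ is determined by $g_j$ the assumed convergence of $(M_j,g_j)$ modular the bubbles $X_1,\dots,X_v$ induces the convergence of $(M_j,J_j,g_j,L_j,A_j)$ required by Theorem~\ref{intromainapl33}. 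Thus
\begin{eqnarray*}
\lim_{j\to\infty}\chi(M_j;K_{M_j}^{-1})&=&\chi(M_\infty;K_{M_\infty}^{-1})-\sum_{l=1}^{u}\mu_{M_\infty,a_l}(K_{M_\infty}^{-1}) \\
&&+\sum_{l=1}^{u}\sum_{k=1}^{N_{a_l,\beta}}\bar{\chi}(X_{\beta,k,a_l},A_{\omega^{a_l,\beta,k}}).
\end{eqnarray*}
For the canonical bundle no genuine Yang--Mills bubbling occurs: under any rescaling the curvature $F_{A_j}$ (the Ricci form) converges to the \emph{vanishing} Ricci form of the Ricci-flat ALE limit, so the Type~$\alpha$ and Type~$\gamma$ bubble terms are zero, and over each Type~$\beta$ bubble the limiting connection is the (flat) Chern connection of $K^{-1}$ of a Ricci-flat K\"{a}hler ALE space. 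Hence $\bar{\chi}(X_{\beta,k,a_l},A_{\omega^{a_l,\beta,k}})$ is, by its definition, the bulk part $\int_{X_{\beta,k,a_l}}[ch(K^{-1})td(TX_{\beta,k,a_l})]_2$ of the Hirzebruch--Riemann--Roch integrand; since $c_1=0$ on $X_{\beta,k,a_l}$ (the Ricci form vanishes) the $c_1$-terms drop, and this reduces to $\tfrac{1}{12}\int_{X_{\beta,k,a_l}}c_2(TX_{\beta,k,a_l})=\tfrac{1}{12}\cdot\tfrac{1}{8\pi^2}\int_{X_{\beta,k,a_l}}|Rm|^2\,dvol$, as Ricci-flatness makes the Euler form equal $\tfrac{1}{8\pi^2}|Rm|^2\,dvol$. (If a bubble is itself an orbifold, the same identity is obtained by telescoping down its own bubble tree.)

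On the other hand, Kodaira vanishing gives $H^i(M_j;K_{M_j}^{-1})=H^i(M_j;K_{M_j}\otimes K_{M_j}^{-2})=0$ for $i>0$ (as $K_{M_j}^{-2}$ is ample), so $\chi(M_j;K_{M_j}^{-1})=\dim H^0(M_j;K_{M_j}^{-1})$, and likewise orbifold Kodaira--Kawamata--Viehweg vanishing on the K\"{a}hler--Einstein $\mathbb{Q}$-Fano orbifold $M_\infty$ gives $\chi(M_\infty;K_{M_\infty}^{-1})=\dim H^0(M_\infty;K_{M_\infty}^{-1})$; and by the convergence theory of K\"{a}hler--Einstein surfaces, $\dim H^0(M_j;K_{M_j}^{-1})=\dim H^0(M_\infty;K_{M_\infty}^{-1})$ for $j$ large. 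Therefore $\lim_{j\to\infty}\chi(M_j;K_{M_j}^{-1})=\chi(M_\infty;K_{M_\infty}^{-1})$, and substituting into the displayed identity cancels these terms; recalling that the Type~$\beta$ bubbles over all $a_l$ are precisely the Ricci-flat K\"{a}hler ALE bubbles $X_1,\dots,X_v$, we obtain the first asserted identity $\sum_{k=1}^{v}\tfrac{1}{8\pi^2}\int_{X_k}|Rm|^2\,dvol=12\sum_{l=1}^{u}\mu_{M_\infty,a_l}(K_{M_\infty}^{-1})$. For the second identity, Chern--Gauss--Bonnet on $M_j$ gives $\chi(M_j)=\tfrac{1}{8\pi^2}\int_{M_j}\big(|Rm|^2-\tfrac{s_j^2}{8}\big)dV_{g_j}$ (using that the trace-free Ricci part vanishes); here $s_j$ is constant and $\mathrm{vol}(M_j,g_j)\to\mathrm{vol}_{orb}(M_\infty,g_\infty)$ by volume continuity, while Theorem~\ref{mainconvgethm} gives $\int_{M_j}|Rm|^2\,dV_{g_j}\to\int_{M_\infty}|Rm|^2\,dV_{g_\infty}+\sum_{k=1}^{v}\int_{X_k}|Rm|^2\,dvol$, and the orbifold (Satake) Chern--Gauss--Bonnet theorem identifies $\tfrac{1}{8\pi^2}\int_{M_\infty}\big(|Rm|^2-\tfrac{s_\infty^2}{8}\big)dV_{g_\infty}$ with $\chi_{orb}(M_\infty)$. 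Hence $\lim_{j\to\infty}\chi(M_j)=\chi_{orb}(M_\infty)+\sum_{k=1}^{v}\tfrac{1}{8\pi^2}\int_{X_k}|Rm|^2\,dvol$, and combining with the first identity gives $\lim_{j\to\infty}\chi(M_j)=\chi_{orb}(M_\infty)+12\sum_{l=1}^{u}\mu_{M_\infty,a_l}(K_{M_\infty}^{-1})$.

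The main obstacle is the middle step: unwinding from the proof of Theorem~\ref{appli33} the precise meaning of the bubble term $\bar{\chi}$ and verifying that, for the canonical bundle, it reduces to the purely Riemannian quantity $\tfrac{1}{12\cdot 8\pi^2}\int_{X_k}|Rm|^2\,dvol$ --- this relies on the absence of concentrated Yang--Mills energy for this particular connection (so that only the Todd/Euler-form part of the index integrand bubbles off) together with $c_1=0$ and Ricci-flatness of the ALE bubbles, and on a telescoping argument when bubbles are orbifolds. A secondary technical point, to be settled via the structure theory of K\"{a}hler--Einstein limits (Tian, Donaldson--Sun), is that $M_\infty$ is genuinely a compact complex orbifold with isolated quotient singularities, so that the orbifold Hirzebruch--Riemann--Roch formula of \cite{BlacheChern96} and the orbifold vanishing theorems apply and the orbifold $H^0$ agrees with the (anti-)plurigenus in the rigidity statement.
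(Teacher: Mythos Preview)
Your proposal is correct and follows essentially the same route as the paper: apply the Euler-number quantization to a power of the canonical bundle with its Chern connection, evaluate the Type~$\beta$ bubble contributions via $c_1=0$ on the Ricci-flat ALE spaces (formula~\eqref{importformula111}), invoke vanishing and the rigidity of $\dim H^0(\,\cdot\,;K^{\pm p})$ along the degeneration to cancel the body terms, and then deduce the Euler-number identity from the Riemannian curvature energy identity and (orbifold) Gauss--Bonnet. The paper packages this as a corollary of the slightly more general Theorem~\ref{mainappl444} (valid for all $p\ge1$ in the positive case and $p\ge2$ in the negative case), and for the second identity uses directly the Einstein simplification~\eqref{euler22}, $\chi(M)=\tfrac{1}{8\pi^2}\int_M|Rm|^2$, rather than carrying a scalar-curvature term and appealing to volume continuity; your detour through the full Gauss--Bonnet integrand is unnecessary (and the constant you wrote does not match the paper's normalization), but the logic is sound.
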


The above theorem says that for large $j$, the Euler number of $M_j$ is determined only by the topology of the limit space $M_\infty$, and we can recover some important geometric information of the ALE bubble spaces from some topological information of the singularities in $M_\infty$. See Example \ref{exam11} and \ref{exam22} for some demonstrations of the applications on two explicit examples.
Next we give some remarks before we explain the theorem.

Firstly, because of the following facts:
\begin{itemize}
\item[(1)] many constraints were found for the possible singularities of the limit orbifolds for the convergence of K\"{a}hler-Einstein surfaces with positive scalar curvature, see e.g. \cite{Tian1990,DonaldsonSunacta,sunjdg16,DonaldsonSunjdg},

\item[(2)] hyper-K\"{a}hler ALE surfaces and more general Ricci flat K\"{a}hler ALE surfaces were well understood, see e.g. \cite{Kronheimer89jdg,Kronheimer89jdg1,SuvainaALE,WrightALE,ChCh1},
\end{itemize}
Theorem \ref{mainaplintr11} should provide a lot of constraints to the behaviour of the degeneration, and hence it should be useful to the study of some aspects of the corresponding moduli spaces. Note that it is proved in \cite{sunjdg16} that the Gromov–Hausdorff compactification agrees with certain algebro-geometric compactification.

Secondly, we expect that Theorem \ref{mainaplintr11} should be helpful to the study of the moduli space of compact K\"{a}hler-Einstein surfaces with negative scalar curvature, on the basis of the recent works \cite{DonaldsonSunacta, DonaldsonSunjdg, liusSze2021cpam} etc.

Now we give some necessary explanations for Theorem \ref{mainaplintr11}.
Firstly, the identities in Theorem \ref{mainaplintr11} are closely related to the classical Riemannian curvature energy identity for non-collapsed Einstein 4-manifolds $(M_j,g_j)$ stated in Theorem \ref{mainconvgethm}. Secondly, recall that \cite{HHorbeuler,Lorbeuler}
\begin{eqnarray}\label{euler11}
\chi_{orb}(M_\infty)=\chi(M_0)+\sum_{k=1}^{u}\frac{1}{|\Gamma_k|}=\chi(M_\infty)-\sum_{k=1}^{u}(1-\frac{1}{|\Gamma_k|}),
\end{eqnarray}
where $|\Gamma_k|$ is the order of the local fundamental group $\Gamma_k$ at $a_k,$  $M_0=M_\infty\setminus \{a_1,\cdots,a_u\}$, and $\chi(M_\infty)$ is the topological Euler number of $M_\infty$. For compact Einstein 4-manifolds (or 4-orbifolds) $M$, it holds that (see e.g. \cite{anderson1989ricci,Jeff2006Curvature})
\begin{eqnarray}\label{euler22}
\chi(M)\,\,(\text{or} \,\,\chi_{orb}(M))=\frac{1}{8\pi^2}\int_M |Rm|^2dvol.
\end{eqnarray}
Thirdly, under the assumptions in Theorem \ref{mainaplintr11},
\begin{eqnarray*}
{\rm vol}(M_j,g_j)\geq c\int_{M_j}c_1(M_j)^2>0
\end{eqnarray*}
for some constant $c=c(\mu)>0$ when the Einstein constant of $g_j$ is uniformly bounded by $\mu>0$, so $(M_j, g_j)$ is non-collapsing, see e.g. \cite{Jeff2006Curvature,DonaldsonSunacta,sunjdg16}.

Next we show some applications of Theorem \ref{mainaplintr11} which may be of some interest to the study of moduli spaces of K\"{a}hler-Einstein Del Pezzo surfaces (with positive scalar curvature). Our arguments rely on two facts. One is that $\mu_{M_{\infty},a_l}(K_{M_{\infty}}^{-1})$ can be computed for non-canonical singularities of type $\frac{1}{r}(b_1,b_2)$ by methods in \cite{ReidYPS,Icecream}, and for canonical (ADE) singularities by using ideas in \cite{ChTs-BMY,LimRota22,LimRota23}, the other is that the classification of possible singularities in the limit orbifolds had been achieved (see e.g. \cite{Tian1990, sunjdg16}).

\begin{thm}\label{mainaplintr22}
Let $(M_j, J_j,g_j)$ be a sequence of closed K\"{a}hler-Einstein surfaces of degree $d$ with positive scalar curvature as in Theorem \ref{intromainapl33}. Suppose that $(M_j, J_j,g_j)$ converges up to a subsequence to a limit space $(M_\infty, J_\infty,g_\infty)$ with ALE bubble spaces as above, then
\begin{eqnarray*}
0<12\sum_{l=1}^{u}\mu_{M_{\infty},a_l}(K_{M_{\infty}}^{-1})<12-d.
\end{eqnarray*}
And the following results hold.
\begin{itemize}
\item[(a)] When $d=3$, the number of singularities of type $A_1$ in $M_\infty$ is $\leq 5$.
\item[(b)] When $d=2,$ the number of singularities of type $A_1$, $A_2$, $A_3$, or $A_4$ in $M_\infty$ is $\leq 6,\, 3,\,  2, \,\,\text{or}\,\,1$ respectively.
 \item[(c)]  When $d=1,$ the number of singularities of type $A_1$, $A_2$, $A_3$, or $A_4$ in $M_\infty$ is $\leq 7,\, 4,\,  2, \,\,\text{or}\,\,2$ respectively; any limit space $M_\infty$ can not have both $A_k$ and $A_l$ singularities if $k+l>9$, in particular, the number of singularities of type $A_5$, $A_6$, $A_7$, or $A_8$ is $\leq 1$; the number of singularities of type $\frac{1}{8}(1,3)$ is $\leq 5.$
 \item[(d)]When $d=1,$ the number of singularities of type $D_4$ is $\leq 2$. And if $M_\infty$ has one $D_4$ singularity, then it has no $A_k$ singularities with $k>5$; if $M_\infty$ has two $D_4$ singularities, then it has at most a $\frac{1}{4}(1,1)$ singularity or a $\frac{1}{9}(1,2)$ singularity besides these two singularities.
\end{itemize}

\end{thm}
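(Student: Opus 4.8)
The plan is to combine the two identities in Theorem \ref{mainaplintr11} with two pieces of external input: (i) the classification of possible isolated quotient/canonical singularities appearing in Gromov--Hausdorff limits of K\"{a}hler--Einstein Del Pezzo surfaces from \cite{Tian1990, sunjdg16}, and (ii) explicit evaluations of the correction term $\mu_{M_{\infty},a_l}(K_{M_{\infty}}^{-1})$ for each such singularity type. First I would establish the basic inequality $0<12\sum_{l=1}^{u}\mu_{M_{\infty},a_l}(K_{M_{\infty}}^{-1})<12-d$. The left-hand inequality is immediate from Theorem \ref{mainaplintr11}, since $\sum_{k=1}^v \frac{1}{8\pi^2}\int_{X_k}|Rm|^2 dvol > 0$ whenever at least one bubble (hence at least one singularity) is present; and when $M_\infty$ is smooth there is nothing to prove. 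For the right-hand inequality I would use $\chi(M_j) = 12 - d$ for a smooth Del Pezzo surface of degree $d$ (that is, $c_1^2 = d$, $\chi = c_2 = 12-d$ via Noether), together with $\lim_{j}\chi(M_j) = \chi_{orb}(M_\infty) + 12\sum_l \mu_{M_\infty,a_l}(K_{M_\infty}^{-1})$ from Theorem \ref{mainaplintr11} and the fact that $\chi_{orb}(M_\infty) = \chi(M_\infty) - \sum_k(1 - 1/|\Gamma_k|)$ from \eqref{euler11}; since $M_\infty$ is a normal projective surface with $\chi(M_\infty)\ge 1$ (it is rational with at worst quotient singularities, so $\chi \geq 1$) and $\sum_k(1-1/|\Gamma_k|) < u \le $ finite, one gets $12\sum_l\mu_{M_\infty,a_l}(K_{M_\infty}^{-1}) = 12 - d - \chi_{orb}(M_\infty) < 12 - d$, using that $\chi_{orb}(M_\infty) > 0$ for these rational orbifolds. (One must double-check that $\chi_{orb}(M_\infty)$ is indeed positive; this follows since $M_\infty$ is a log Del Pezzo surface and $\chi_{orb} = \frac{1}{8\pi^2}\int |Rm|^2 > 0$ by \eqref{euler22} as $M_\infty$ carries an orbifold K\"{a}hler--Einstein metric with positive scalar curvature, which is not flat.)

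Next, for parts (a)--(d) I would tabulate $\mu_{M_\infty,a}(K_{M_\infty}^{-1})$ for the relevant singularity types. For an $A_k$ (that is, $\frac{1}{k+1}(1,k)$) singularity one computes, by the Dedekind-sum / Atiyah--Bott type formula of \cite{BlacheChern96} (or equivalently by the methods of \cite{ReidYPS,Icecream} for the cyclic quotient case, and \cite{ChTs-BMY,LimRota22,LimRota23} for ADE canonical singularities), a specific rational value; for $A_1$ this should be $\frac{1}{12}$-ish in the right normalization so that $12\mu = 1$ for a single $A_1$ point (giving the thresholds $5,6,7$ in (a),(b),(c) after dividing $12-d$ by the per-point contribution and rounding down: $12-3=9$ with... — actually one must be careful, the stated bounds $5,6,7$ for $A_1$ at degrees $3,2,1$ suggest $12\mu_{A_1}$ is slightly below $\frac{12-d}{5}$, so the precise value of $\mu_{A_1}(K^{-1})$ matters and I would pin it down exactly). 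Then each bound in (a)--(d) follows by a counting argument: if $M_\infty$ has $m$ singularities of type $T$ then $m \cdot 12\mu_{T}(K_{M_\infty}^{-1}) \le 12\sum_l 12\mu_{M_\infty,a_l}(K_{M_\infty}^{-1}) < 12 - d$, so $m < (12-d)/(12\mu_T)$, and likewise the mixed-type statements ("cannot have both $A_k$ and $A_l$ if $k+l>9$", the $D_4$ cases) come from adding the corresponding contributions and comparing against $12-d$. The monotonicity $\mu_{A_k} < \mu_{A_{k+1}}$ (larger singularities contribute more) is what makes the higher-$A_k$ bounds sharper; I would verify this monotonicity from the explicit formula.

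The main obstacle I expect is the exact computation of the correction terms $\mu_{M_\infty,a}(K_{M_\infty}^{-1})$ for the $D_4$ and the sporadic cyclic quotients $\frac{1}{8}(1,3)$, $\frac{1}{4}(1,1)$, $\frac{1}{9}(1,2)$ appearing in (c)--(d): for a non-Gorenstein quotient singularity $\frac{1}{r}(b_1,b_2)$ one needs the behaviour of the anticanonical sheaf near the singular point (the local weights of $K_{M_\infty}^{-1}$ under the $\Gamma$-action, which depend on whether the singularity is canonical and on the precise class of $K_{M_\infty}^{-1}$ in the local class group), feed those into the Blache/orbifold-HRR correction formula, and simplify the resulting finite character sum. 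For $D_4$ (a non-cyclic $\frac{1}{?}$ quotient by the binary dihedral group, Gorenstein canonical) one instead uses that $K_{M_\infty}$ is locally trivial, so the relevant correction is a purely group-theoretic quantity computable via the McKay correspondence, and I would cross-check the answer against the Ricci-flat ALE side using the known $\int_{X}|Rm|^2$ for the corresponding ALE gravitational instanton (the $D_4$-type ALE space), invoking $\sum_k \frac{1}{8\pi^2}\int_{X_k}|Rm|^2 = 12\sum_l \mu_{M_\infty,a_l}(K_{M_\infty}^{-1})$ from Theorem \ref{mainaplintr11} as a consistency constraint. A secondary subtlety is ensuring that, in limits of \emph{degree-$d$} Del Pezzo surfaces, the list of singularities that can actually occur is restricted enough that the naive counting bounds are the ones stated — here I would quote the classification results of \cite{Tian1990, sunjdg16} to rule out, e.g., singularities too "large" to fit under the budget $12-d$ at all, and to justify the mixed-type exclusions.
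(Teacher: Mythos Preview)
Your overall strategy matches the paper's: derive the budget inequality $12\sum_l \mu_{M_\infty,a_l}(K_{M_\infty}^{-1})<12-d$ from Theorem \ref{mainaplintr11} together with $\chi(M_j)=12-d$ and $\chi_{orb}(M_\infty)>0$ (the latter via \eqref{euler22}, since $M_\infty$ carries a non-flat orbifold K\"ahler--Einstein metric), then compare against the per-singularity contributions. For the record, the precise values used in the paper are
\[
12\mu_{A_k}(K^{-1})=(k+1)-\tfrac{1}{k+1},\qquad 12\mu_{D_4}(K^{-1})=\tfrac{39}{8},
\]
and $12\mu(K^{-1})=\tfrac{3}{4},\ \tfrac{15}{8},\ \tfrac{8}{9}$ for $\tfrac{1}{4}(1,1)$, $\tfrac{1}{8}(1,3)$, $\tfrac{1}{9}(1,2)$ respectively (computed from \cite{ChTs-BMY,LimRota22,LimRota23} for ADE and from the Dedekind sums in \cite{ReidYPS,Icecream} for the cyclic ones). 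In particular $12\mu_{A_1}=\tfrac{3}{2}$, not $1$, and with this one checks that your counting argument $m<(12-d)/(12\mu_T)$ recovers all the stated bounds in (a), (c) and (d), and most of (b).

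There is one genuine gap. Pure counting does \emph{not} give the bound ``$\le 1$'' for $A_4$ singularities when $d=2$: one only gets $m<10/\tfrac{24}{5}\approx 2.08$, hence $m\le 2$. The paper closes this as follows. If $M_\infty$ had two $A_4$ points, the remaining budget would be $10-\tfrac{48}{5}=\tfrac{2}{5}$, which is strictly smaller than every value of $12\mu$ on the list above; hence $M_\infty$ would have \emph{only} those two canonical singularities. But the classification in \cite{sunjdg16} (see p.~165) says that for $d=2$ a limit with only canonical singularities can have at worst $A_1,A_2$, or exactly two $A_3$ points --- no $A_4$ at all --- a contradiction. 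So the role of \cite{sunjdg16} here is not merely to restrict the menu of singularity types in advance, as you suggest, but to feed back into the argument \emph{after} the budget computation and sharpen a bound that counting alone cannot reach. You should flag this hybrid step explicitly for part (b).
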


It had been shown in \cite{sunjdg16} that when $d=3,4$ the singularities must be canonical. Actually for $d = 4$, $Sing(M_\infty)$ consists of only two or four $A_1$ singularities; for $d = 3$, $Sing(M_\infty)$ consists of only points of type $A_1$, or of exactly
three points of type $A_2$. And it is well know that for $d>4$ the moduli space is
just a single point.

\begin{rem}
The bounds in the theorem are quite effective. In fact (see Page 165 of \cite{sunjdg16}), if we assume that $Sing(M_\infty)$ consists of only canonical singularities, then for $d = 2$, $Sing(M_\infty)$ consists of only points of type $A_1, A_2$, or of exactly
two $A_3$ singularities; for $d = 1$, $Sing(M_\infty)$ consists of only points of type $A_k$ $(k \leq 7)$, or of
exactly two $D_4$ singularities. And there exist limit spaces with one $A_8$ singularity and two
$\frac{1}{9}(1, 2)$ singularities (Example 3.10 of \cite{sunjdg16}), and limit spaces with two $D_4$ singularities and one $\frac{1}{4}(1,1)$ singularity (Example 5.19 of \cite{sunjdg16}).

 We think that if we trace the process of constructing moduli spaces in \cite{sunjdg16} carefully, we could be able to obtain more precise information about singularities. However the arguments in this paper should be valuable in some sense since we can make some preliminary judgments without resorting to subtle analysis of moduli spaces. Indeed the identities in Theorem \ref{mainaplintr11} are complementary to Noether formula  for
singular surfaces in Proposition 2.6 of \cite{HPdel} which had played a significant role in the arguments of \cite{sunjdg16}, and together they can lead to very effective constraints.
\end{rem}

 In fact, we have following identity which connects Milnor numbers for singularities and the correction terms $\mu_{M_{\infty},p}(K_{M_{\infty}}^{-1})$ in the Hirzebruch-Riemann-Roch theorem for orbifolds.
\begin{prop}\label{HRR-Milnor}
For each $d=1,\cdots,4$, let $M_\infty $ be a limit orbifold as in Theorem \ref{mainaplintr22}, then we have
\begin{eqnarray*}
\sum_{p\in Sing(M_\infty)}(1-\frac{1}{n_p})+\sum_{p\in Sing(M_\infty)}\nu_p=12\sum_{p\in Sing(M_\infty)}\mu_{M_{\infty},p}(K_{M_{\infty}}^{-1}),
\end{eqnarray*}
and it follows that
\begin{eqnarray*}
\rho(M_\infty)+12\sum_{p\in Sing(M_\infty)}\mu_{M_{\infty},p}(K_{M_{\infty}}^{-1})-\sum_{p\in Sing(M_\infty)}(1-\frac{1}{n_p})=10-d,
\end{eqnarray*}
where $\nu_p$ is the Milnor number for $p$, $n_p$ is the order of the local fundamental group at $p$, and
$\rho(M_\infty)$ is the Picard rank of $M_\infty$.
\end{prop}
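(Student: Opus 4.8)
The plan is to deduce the proposition from Theorem~\ref{mainaplintr11} (positive scalar curvature case) together with the topology of the degenerating family and elementary Hodge theory on the limit orbifold $M_\infty$. Recall first the basic numerics: a smooth K\"ahler--Einstein del Pezzo surface $M_j$ of degree $d$ has topological Euler number $\chi(M_j)=12-d$ (by Noether's formula, since $\chi(\mathcal{O})=1$ and $K^2=d$). By the convergence and algebraization results underlying Theorems~\ref{intromainapl33} and~\ref{mainaplintr22} (Tian, Donaldson--Sun, \cite{sunjdg16}), for $j$ large the degeneration of $M_j$ to $M_\infty$ is modeled by a $\mathbb{Q}$-Gorenstein smoothing of the $\mathbb{Q}$-Fano orbifold $M_\infty$ (with $K_{M_\infty}^2=d$) which simultaneously smooths all its quotient singularities. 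For $p\in Sing(M_\infty)$ let $F_p$ be the Milnor fibre of the local $\mathbb{Q}$-Gorenstein smoothing and set $\nu_p=b_2(F_p)$ (the Milnor number of $p$, in the sense of Proposition~2.6 of \cite{HPdel}); since the singularity types occurring for $d\le 4$ are classified in \cite{Tian1990,sunjdg16} (ADE points and cyclic $T$-singularities) and for each of them $b_1(F_p)=b_3(F_p)=0$, one has $\chi(F_p)=1+\nu_p$. Additivity of the Euler characteristic over the smoothing family then gives
\begin{equation*}
\chi(M_j)=\chi\bigl(M_\infty\setminus Sing(M_\infty)\bigr)+\sum_{p\in Sing(M_\infty)}\chi(F_p)=\chi(M_\infty)+\sum_{p\in Sing(M_\infty)}\nu_p ,
\end{equation*}
hence $\chi(M_\infty)+\sum_{p}\nu_p=12-d$.

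I would then feed this into Theorem~\ref{mainaplintr11}. By \eqref{euler11}, with $n_p=|\Gamma_p|$, one has $\chi_{orb}(M_\infty)=\chi(M_\infty)-\sum_{p}(1-\tfrac1{n_p})$, so the identity $\lim_{j}\chi(M_j)=\chi_{orb}(M_\infty)+12\sum_{p}\mu_{M_\infty,p}(K_{M_\infty}^{-1})$ of Theorem~\ref{mainaplintr11} rearranges to
\begin{equation*}
12\sum_{p\in Sing(M_\infty)}\mu_{M_\infty,p}(K_{M_\infty}^{-1})=(12-d)-\chi(M_\infty)+\sum_{p\in Sing(M_\infty)}\Bigl(1-\frac1{n_p}\Bigr).
\end{equation*}
Substituting $(12-d)-\chi(M_\infty)=\sum_{p}\nu_p$ from the previous step yields the first asserted identity,
\begin{equation*}
\sum_{p\in Sing(M_\infty)}\Bigl(1-\frac1{n_p}\Bigr)+\sum_{p\in Sing(M_\infty)}\nu_p=12\sum_{p\in Sing(M_\infty)}\mu_{M_\infty,p}(K_{M_\infty}^{-1}).
\end{equation*}

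For the second identity I would exploit that $M_\infty$ is a normal rational projective surface with only quotient singularities, hence a $\mathbb{Q}$-homology manifold with $H^1(\mathcal{O}_{M_\infty})=H^2(\mathcal{O}_{M_\infty})=0$ (Kawamata--Viehweg vanishing for the ample $-K_{M_\infty}$). Thus $b_0=b_4=1$ and $b_1(M_\infty)=b_3(M_\infty)=0$ by rational Poincar\'e duality together with Hodge theory, so $\chi(M_\infty)=2+b_2(M_\infty)$; moreover $H^2(M_\infty;\mathbb{Q})$ is of pure Hodge type $(1,1)$ and, since $h^{0,1}=0$, spanned by algebraic classes, whence $\rho(M_\infty)=b_2(M_\infty)$. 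Combined with $\chi(M_\infty)+\sum_{p}\nu_p=12-d$ this gives $\rho(M_\infty)+\sum_{p}\nu_p=10-d$; replacing $\sum_{p}\nu_p$ by $12\sum_{p}\mu_{M_\infty,p}(K_{M_\infty}^{-1})-\sum_{p}(1-\tfrac1{n_p})$ from the first identity produces
\begin{equation*}
\rho(M_\infty)+12\sum_{p\in Sing(M_\infty)}\mu_{M_\infty,p}(K_{M_\infty}^{-1})-\sum_{p\in Sing(M_\infty)}\Bigl(1-\frac1{n_p}\Bigr)=10-d .
\end{equation*}

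The step I expect to be the main obstacle is the relation $\chi(M_j)=\chi(M_\infty)+\sum_{p}\nu_p$: one must be certain that the Gromov--Hausdorff degeneration is genuinely modeled by a \emph{global} $\mathbb{Q}$-Gorenstein smoothing deforming away all orbifold points at once, and that the Milnor number of each (possibly non-Gorenstein) quotient singularity is taken to be $b_2$ of its $\mathbb{Q}$-Gorenstein Milnor fibre, so that Euler characteristics add up correctly and the convention agrees with that of Proposition~2.6 of \cite{HPdel}. This is exactly where the classification of the singularities occurring for $d\le4$, and the explicit topology of their $\mathbb{Q}$-Gorenstein smoothings (ADE Milnor fibres for canonical points, rational homology balls or $T$-singularity Milnor fibres for the cyclic quotient points), are used to make each $\nu_p$ concrete. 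A secondary technical point is the identification $\rho(M_\infty)=b_2(M_\infty)$ for the singular surface $M_\infty$; as an independent consistency check one can also verify the first identity termwise via the Gauss--Bonnet formula $\tfrac1{8\pi^2}\int_X|Rm|^2=\chi(X)-\tfrac1{|\Gamma|}$ for a Ricci-flat ALE space $X$ asymptotic to $\mathbb{R}^4/\Gamma$, which reproduces $12\mu_{M_\infty,p}(K_{M_\infty}^{-1})=(1-\tfrac1{n_p})+\nu_p$ at each singularity separately, but the global argument above avoids having to analyse the bubble tree over each $a_l$.
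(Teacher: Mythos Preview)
Your argument is correct and leads to exactly the same two intermediate identities as the paper, namely $\chi(M_\infty)+\sum_p\nu_p=12-d$ and $\rho(M_\infty)+\sum_p\nu_p=10-d$, which you then combine with Theorem~\ref{mainaplintr11} and \eqref{euler11} precisely as the paper does. The only difference is one of packaging: the paper obtains these two equalities by simply quoting Proposition~2.6 of \cite{HPdel} (the singular Noether formula $K_{M_\infty}^{\wedge 2}+\chi(M_\infty)+\sum_p\nu_p=12\chi(\mathcal O_{M_\infty})$) and an identity from the proof of Lemma~5.16 in \cite{sunjdg16} ($K_{M_\infty}^{\wedge 2}+\rho(M_\infty)+\sum_p\nu_p=12\chi(\mathcal O_{M_\infty})-2$), whereas you rederive both from the topology of the $\mathbb{Q}$-Gorenstein smoothing (Milnor-fibre additivity for $\chi$) and orbifold Hodge theory (giving $\chi(M_\infty)=2+b_2(M_\infty)=2+\rho(M_\infty)$). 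Your route is more self-contained and makes clear that the step you flag as ``the main obstacle'' is exactly the content of \cite[Proposition~2.6]{HPdel}; the paper's route is shorter because it just cites these formulas. Either way the logic is the same.
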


\begin{rem}
In \cite{sunjdg16}(see Remark 5.14 therein) the authors gave explicit examples of limit orbifolds $M_\infty$ which have exactly two $\frac{1}{4}(1,1)$ singularities in the degree 2 case. By \cite{ReidYPS,Icecream}, for each $\frac{1}{4}(1,1)$ singularity, $\mu_{M_{\infty},a_l}(K_{M_{\infty}}^{-1})=\frac{1}{16},$ so by Theorem \ref{mainaplintr11}
\begin{eqnarray*}
\sum_{k=1}^{v}\frac{1}{8\pi^2}\int_{X_k}|Rm|^2dvol
=12\sum_{l=1}^{2}\mu_{M_{\infty},a_l}(K_{M_{\infty}}^{-1})=\frac{3}{2}.
\end{eqnarray*}
It is obvious that $v\geq 2$, and by the results in \cite{SuvainaALE,WrightALE} the Ricci flat K\"{a}hler ALE bubble spaces are given by quotients of hyper-K\"{a}hler ALE surfaces in this case, and the minimum value for $$E(X_k)=\int_{X_k}|Rm|^2dvol$$
is $6\pi^2$.
Therefore if $M_j$ converge to $M_\infty$, then exactly two ALE bubble spaces with curvature energy $E(X_k)=6\pi^2$ will appear. And in Subsection \ref{moduliappli} we will show the effectiveness of Theorem \ref{mainaplintr11} in some explicit examples given in \cite{sunjdg16} by computing the orbifold Euler numbers of the orbifolds in the examples directly. For instance, we can get that $\chi_{orb}(M_\infty)=\frac{17}{2}$ for these orbifolds by methods of algebraic topology(see Example \ref{exam11}), thus it is consistent with the result obtained by using Theorem \ref{mainaplintr11} by noticing that $\chi(M_j)=10$ in this case.
\end{rem}

\begin{rem}
For the negative scalar curvature case, let $M_j$ be a sequence of closed K\"{a}hler-Einstein surfaces with negative scalar curvature and Euler number $N_e$ which converge to $M_\infty$ as in Theorem \ref{mainaplintr11}, and if $M_\infty$ has only canonical singularities, then we can give some preliminary bounds on the number of singularities as above. Indeed, it holds that
\begin{eqnarray*}
0<12\sum_{l=1}^{u}\mu_{M_{\infty},a_l}(K_{M_{\infty}}^{2})<N_e,
\end{eqnarray*}
and
\begin{eqnarray*}
12\mu_{M_{\infty},a_l}(K_{M_{\infty}}^{2})=(n+1-\frac{1}{n+1}),\, (n+1-\frac{1}{4(n-2)}),\, (7-\frac{1}{24}),\,(8-\frac{1}{48}),\,(9-\frac{1}{120})
\end{eqnarray*}
for singularities of type $A_n,$ $D_n,$ and $E_6$, $E_7$,and $E_8$ by
\cite{ChTs-BMY,LimRota22,LimRota23}. We have to acknowledge that such bounds may be rather crude and requires additional theoretical support to determine the possible types of singularities.

The quantity $\mu_{M_{\infty},a_l}(L_\infty) $ in Theorem \ref{intromainapl33} is computable for more general line bundles by the methods in \cite{ReidYPS,Icecream} and \cite{ChTs-BMY,LimRota22,LimRota23}. So we may get some interesting results by applying it to other line bundles which are important in various cases.

\end{rem}

Recently, it was shown that one can apply Riemann-Roch polynomials which are equal to the Euler numbers of some holomorphic line bundles to the classification of hyper-K\"{a}hler fourfold (complex dimension) \cite{2022Computing}. It may be hopeful to combine the ideas in \cite{2022Computing} and the results about quantization of Euler numbers of holomorphic vector bundles over a sequence of higher dimensional K\"{a}hler-Einstein manifolds analogous to
Theorem \ref{appli33} to get some further applications.

\begin{rem}\label{rmkdegeymenergyhd}
In higher dimensional cases, if we replace the $L^2$ bounds for Riemannian curvatures and Yang-Mills curvatures (see \eqref{cheegernaberjiang} and \eqref{ymbounds}) by the corresponding $L^{\frac{n}{2}}$ bounds, we can obtain similar results for Yang-Mills fields over (real)  $n$-dimensional non-collapsing Einstein manifolds as Theorem \ref{degeymenergyintr} and Theorem \ref{appli33} by applying similar arguments as in Section \ref{treeidentity}.
See Theorem \ref{degeymenergyhd} and Remark \ref{hgdimrem} for the corresponding quantization results in higher dimensional cases.

Theorem \ref{intromainapl33} can be extended to the (complex) $m$-dimensional case for any integer $m>2$ by imposing some further $L^{m}$ bound assumptions on Riemannian curvature and Yang-Mills curvature, see Theorem \ref{highdimthm}. See also Theorem \ref{volquant} for the quantization of the volume of  line bundles over K\"{a}hler-Einstein manifolds.
\end{rem}

Inspired by the substantial and profound works e.g. \cite{cheegercoldingJdg1,cheegercoldingJdg2,cheegercoldingJdg3,cheeger2015regularity,jiangnaber21,RiviequantYM,Naberyangenergy,tian2000y,taotian,Uhlenbkato,LXZ-Kstab,Li-KStabEins}, we think it is of great interest to investigate the qualitative behavior for a sequence of Yang-Mills connections $A_j$ with uniform $L^2$ bounds for Yang-Mills curvatures in vector bundles $E_j$ over non-collapsed degenerating closed Einstein $n$-manifolds $(M_j, g_ j)$ $(n>4)$ with uniformly bounded Einstein constants and bounded diameters.

Now we turn to the main ideas for the proof of Theorem \ref{degeymenergyintr}.
It is well known that Yang-Mills fields can blow-up at points where Yang-Mills energy concentrate and the Einstein metrics of the underlying 4-manifolds can also blow-up at points where the curvature energy concentrate. To study the compactness problem, the first difficulty is that the bubble-neck decomposition for Yang-Mills fields over 4-manifolds with varying Einstein metrics is somewhat subtle, since the blow-up of Yang-Mills fields and the blow-up of the underlying domain manifolds are entangled with each other. The second difficulty is that the degeneration of non-collapsed Einstein 4-manifolds is accompanied by the blow-up of Riemannian curvatures, therefore we need to pay attention to the effects of the Riemannian curvatures to the asymptotic analysis for Yang-Mills fields over degenerating neck regions.

For the proof of Theorem \ref{degeymenergyintr}, the crucial step is to prove that there is no Yang-Mills energy loss on the degenerating neck regions. To achieve this, we shall explore the fine structure of Yang-Mills equations and establish two key differential inequalities for Yang-Mills connections in vector bundles over Riemannian manifolds (see Lemma \ref{keyymequ}), and then derive decay estimates for Yang-Mills fields over degenerating neck regions. It is worth mentioning that the techniques developed in \cite{Naberyangenergy} should also give a (more involved) proof of Theorem \ref{degeymenergyintr} by applying some techniques in the degenerating manifolds situations.

Finally, we would like to remark that Yang-Mills fields over manifolds with metrics degenerating in different ways have been studied in e.g. \cite{Chen1998Con,ChenComplex, FukayaAsd98, ASDK32021}.

\textbf{Notation.} In this paper, $C$ represents a universal constant which may vary from line to line. $K_X^{2}$ denotes $K_X\otimes K_X$ and so on. And $n$ represents the real dimension of a manifold, and $m$ is the complex dimension if we do not explicitly mention in the following.

The paper is organized as follows. In Section \ref{apllisec}, we give some applications of Theorem \ref{degeymenergyintr}. In particular, we will give the proof of Theorem \ref{mainaplintr11} in Subsection \ref{mainappliprf}, and prove Theorem \ref{mainaplintr22} and Proposition \ref{HRR-Milnor} in Subsection \ref{moduliappli}. In Section \ref{Preliminaries111}, we recall some background knowledge about the bubble tree convergence of non-collapsed degenerating Einstein 4-manifolds and present some key results for the blow-up analysis of Yang-Mills fields. In Section \ref{treeidentity}, we do the bubble-neck decomposition for the blow-up analysis of Yang-Mills fields over non-collapsed degenerating Einstein 4-manifolds, prove Theorem \ref{degeymenergyintr} and state a higher dimensional analog of Theorem \ref{degeymenergyintr}.

\

\section{Geometric and topological applications}\label{apllisec}

\vskip5pt

In this section, we present some geometric and topological applications of the energy quantization result (Theorem \ref{degeymenergyintr}) for Yang-Mills fields over non-collapsed degenerating Einstein 4-manifolds. The arguments used in this section are in fact independent on the detailed proof of Theorem \ref{degeymenergyintr}.

\subsection{Quantization of the first Pontrjagin number}

Recall that
\begin{eqnarray*}
-\text{tr}_\mathfrak{g}(F_{A}\wedge F_{A})=|F_{A}^{+}|^2-|F_{A}^{-}|^2,
\end{eqnarray*}
where $F_{A}^{-}$ is the anti-self-dual part of $F_{A}$ (i.e., $\ast F_{A}^{-}=-F_{A}^{-}$), and $F_{A}^{+}$ is the self-dual part of $F_{A}$, $\text{tr}_\mathfrak{g}$ is the trace on $\mathfrak{g}$ (which is the Lie algebra of $G$) given by the Killing form. A direct consequence of Theorem \ref{degeymenergyintr} is the following quantization of the first Pontrjagin number.
\begin{thm}\label{degeymenergycor}
Let $M_j, g_j, E_j,A_j$ be as in Theorem \ref{degeymenergyintr}, then we have
\begin{eqnarray*}
\lim_{j\rightarrow \infty}\int_{M_j}{\rm tr}_\mathfrak{g}(F_{A_j}\wedge F_{A_j})& =&\int_{M_\infty}{\rm tr}_\mathfrak{g}(F_{A_\infty}\wedge F_{A_\infty}) +\sum_{l=1}^{v}\sum_{k=1}^{N_{b_l}}\mathcal{YMP}(\mathbb{R}^4, A_{\omega_{b_l,k}})\\
&& +\sum_{l=1}^{u}\sum_{\eta=\alpha,\beta,\gamma}\sum_{k=1}^{N_{a_l,\eta}} \mathcal{YMP}(X_{\eta,k,a_l}, A_{\omega^{a_l,\eta,k}}),
\end{eqnarray*}
where $\mathcal{YMP}(X, A)$ is the integral of the corresponding 4-form ${\rm tr}_\mathfrak{g}(F_{A}\wedge F_{A})$ for the connection $A$ over the underlying space $X$. Here $X_{\alpha,k,a_l}$ is $\mathbb{R}^4$, $X_{\beta,k,a_l}$ is a Ricci flat ALE space and $X_{\gamma,k,a_l}$ is of the form $\mathbb{R}^4/\Gamma$ for some nontrivial finite group $\Gamma \subset SO(4)$.
\end{thm}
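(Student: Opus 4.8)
The plan is to obtain the identity directly from Theorem \ref{degeymenergyintr} and the bubble tree convergence it provides, using two elementary observations: that $\mathrm{tr}_\mathfrak{g}(F_A\wedge F_A)$ is pointwise dominated by the Yang-Mills energy density, and that its integral over a $4$-manifold is invariant under the conformal rescalings used to produce the bubbles. From the displayed formula $-\mathrm{tr}_\mathfrak{g}(F_A\wedge F_A)=\big(|F_A^+|^2-|F_A^-|^2\big)\,dV_g$ together with $|F_A|^2=|F_A^+|^2+|F_A^-|^2$ one gets the pointwise bound $\big|\mathrm{tr}_\mathfrak{g}(F_A\wedge F_A)\big|\le |F_A|^2\,dV_g$; this is the only genuinely new input needed, and it is what allows the (signed) $4$-form to be controlled by the (nonnegative) energy on the troublesome regions.

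Fix a small $\delta>0$ and use the bubble--neck decomposition attached to the convergence of $(M_j,g_j,A_j)$: outside $\delta$-neighborhoods of the finitely many blow-up points $\{b_l\}$ and $\{a_l\}$ the "body" of $M_j$ is diffeomorphic to the corresponding region of $M_\infty$, on which, in a good gauge, $A_j\to A_\infty$ and $g_j\to g_\infty$ smoothly; around each blow-up point finitely many suitably rescaled regions converge smoothly on compact sets to the bubble connections over $\mathbb{R}^4$, over a Ricci flat ALE space, or over $\mathbb{R}^4/\Gamma$; the rest consists of neck regions. On the body and on each rescaled bubble region, $\int\mathrm{tr}_\mathfrak{g}(F_{A_j}\wedge F_{A_j})$ converges to the corresponding integral for $A_\infty$ (resp. for the bubble connection): indeed $\mathrm{tr}_\mathfrak{g}(F\wedge F)$ is a continuous quadratic function of $F$, the convergence is smooth on compact sets, and the integral of a $4$-form on a $4$-manifold is unchanged by the rescalings; the tails of the finitely many bubble integrals over the complements of large compact sets are controlled, via the pointwise bound above, by the finite Yang-Mills energies of the bubbles and are therefore small once the compact sets are large.

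It remains to show the neck contribution vanishes in the limit. Writing $\int_{M_j}|F_{A_j}|^2\,dV_{g_j}$ as the sum of its body part, its bubble-region parts and its neck part and passing to the limit in $j$, Theorem \ref{degeymenergyintr} together with the convergence on the body and bubble regions (already invoked above) shows that $\int_{\mathrm{neck}}|F_{A_j}|^2\,dV_{g_j}$ converges, as $j\to\infty$, to the Yang-Mills energy of $A_\infty$ outside the $\delta$-body plus the energies of the bubbles outside the chosen large compact sets; letting $\delta\to0$ and exhausting the bubbles, this tends to $0$. By the pointwise bound, $\big|\int_{\mathrm{neck}}\mathrm{tr}_\mathfrak{g}(F_{A_j}\wedge F_{A_j})\big|\le\int_{\mathrm{neck}}|F_{A_j}|^2\,dV_{g_j}$, so the neck contribution to the $4$-form also tends to $0$. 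Assembling the body term, the $\{b_l\}$-bubble terms over $\mathbb{R}^4$, and the three types $\eta=\alpha,\beta,\gamma$ of bubble terms at each $a_l$, and setting $\mathcal{YMP}(X,A)=\int_X\mathrm{tr}_\mathfrak{g}(F_A\wedge F_A)$, yields the asserted identity.

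Equivalently, one may apply Theorem \ref{degeymenergyintr} separately to $\int|F_{A_j}^+|^2$ and to $\int|F_{A_j}^-|^2$ — each obeys the same quantization, since the conformally invariant self-dual/anti-self-dual splitting is preserved under the convergence on the body and bubbles, and both densities are dominated by $|F_{A_j}|^2$ on the necks — and then subtract. In either route the only place the $4$-form could be lost is the neck, and ruling this out is precisely the content of Theorem \ref{degeymenergyintr}, so there is no genuinely new obstacle here; the main point to keep track of is merely that, unlike the energy, the bubble contributions $\mathcal{YMP}(X,A)$ can have either sign (a self-dual bubble contributes positively, an anti-self-dual one negatively), so the identity cannot be replaced by an inequality between absolute values.
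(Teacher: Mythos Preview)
Your proposal is correct and takes essentially the same approach as the paper, which simply states that the theorem is ``a direct consequence of Theorem \ref{degeymenergyintr}'' after recalling the identity $-\mathrm{tr}_\mathfrak{g}(F_A\wedge F_A)=|F_A^+|^2-|F_A^-|^2$. You have merely spelled out in detail what the paper leaves implicit: the pointwise domination of the signed $4$-form by the energy density, the conformal invariance under rescaling, and the consequent vanishing of the neck contribution; both of your routes (direct domination on the necks, or separate quantization of $|F^+|^2$ and $|F^-|^2$) are valid readings of the paper's one-line justification.
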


By Chern-Weil theory, for any connection $A$ in a bundle $E$ with structure group $G$ over a closed 4-manifold $M$ (see e.g. \cite{Taubes1982Self}),
\begin{eqnarray*}
p_1(E)= -\frac{1}{4\pi^2}\int_{M}\text{tr}_\mathfrak{g}(F_{A}\wedge F_{A}).
\end{eqnarray*}
Therefore, Theorem \ref{degeymenergycor} says that there exists a certain kind of quantization for the first Pontrjagin number $p_1$, provided the existence of Yang-Mills connections.

\subsection{Quantization of the Euler number of holomorphic vector bundles}

Recall that the Euler number of a holomorphic vector bundle $E$ over a closed complex manifold $M$ is
\begin{eqnarray*}
\chi(M;E)=\sum_{q=0}^{m}(-1)^q \ {\rm dim} H^ q (M;E), \quad m={\rm dim}_{\mathbb{C}}M.
\end{eqnarray*}
The classical Hirzebruch-Riemann-Roch theorem e.g. \cite{Hirzebruchbook1,spingeobook89} states that
\begin{eqnarray}\label{HRR111}
\chi(M;E)=\int_{M}[ch(E) \ td(TM)]_m,
\end{eqnarray}
where $ch(E)$ is the Chern character, $td(TM)$ is the Todd polynomial, and $[\quad ]_m $ means the component of degree $m.$

It is easy to see that, for any metric $g$ on a closed 2-dimensional complex manifold $M$ and for any connection $A$ in $E$ over $M$, we have that for any compact domain $K\subset M$
\begin{eqnarray*}
\int_{K}[ch(E) \ td(TM)]_2\leq C\int_{K}\left(|Rm(g)|^2+|F_A|^2\right)dV_g
\end{eqnarray*}
for some universal constant $C>0$. Therefore, similarly to Theorem \ref{degeymenergycor}, we can obtain a quantization for $\chi(M;E)$  by applying Theorem \ref{degeymenergyintr} and the classical Riemannian curvature energy identity as stated in Theorem \ref{mainconvgethm} in Section \ref{Preliminaries111}. More precisely, we have

\begin{thm}\label{appli33}
Let $M_j, g_j, E_j,A_j$ be as in Theorem \ref{degeymenergyintr}, and furthermore, suppose that $(M_j, J_j, g_j)$ is a K\"{a}hler-Einstein surface, $E_j$ is a holomorphic vector bundle over $M_j$ and $A_j$ is a Yang-Mills connection in $E_j$. Then we have
\begin{eqnarray*}
\lim_{j\rightarrow \infty}\int_{M_j} [ch(E_j) \ td(TM_j)]_2 &=&\int_{M_\infty}[ch(E_\infty) \ td(TM_\infty)]_2 \\
& &+\sum_{l=1}^{v}\sum_{k=1}^{N_{b_l}}\bar{\chi}(\mathbb{C}^2,A_{\omega_{b_l,k}}) \nn \\
&& +\sum_{l=1}^{u}\sum_{\eta=\alpha,\beta,\gamma}\sum_{k=1}^{N_{a_l,\eta}}\bar{\chi}(X_{\eta,k,a_l},A_{\omega^{a_l,\eta,k}}),
\end{eqnarray*}
where $\bar{\chi}(X,A)$ is the integral of the corresponding 4-form $[ch(E) \ td(TX)]_2$ for the connection $A$ over the underlying space $X$. Here $X_{\alpha,k,a_l}$ is $\mathbb{C}^2$, $X_{\beta,k,a_l}$ is a Ricci flat ALE space and $X_{\gamma,k,a_l}$ is of the form $\mathbb{C}^2/\Gamma$ for some nontrivial finite group $\Gamma \subset U(2)$.
\end{thm}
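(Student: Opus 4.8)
The plan is to deduce Theorem \ref{appli33} directly from the energy quantization Theorem \ref{degeymenergyintr}, the Riemannian curvature energy identity of Theorem \ref{mainconvgethm}, and the bubble-tree convergence set up in the introduction, by running a dominated-convergence-type argument on the integrand $[ch(E_j)\,td(TM_j)]_2$. The key observation is the pointwise estimate already recorded in the excerpt: on any compact domain $K$ of a K\"ahler surface one has $\int_K [ch(E)\,td(TM)]_2 \le C\int_K(|Rm(g)|^2+|F_A|^2)\,dV_g$. In the K\"ahler-Einstein case the $2$-form $[ch(E)\,td(TM)]_2$ is, up to universal constants, a linear combination of $c_1(E)^2$, $c_2(E)$, $c_1(E)\wedge c_1(M)$, $c_1(M)^2$ and $c_2(M)$; each of these is represented by a quadratic expression in $F_A$ and $Rm(g)$ via Chern-Weil, so the integrand is controlled in $L^1$ uniformly along the sequence by \eqref{cheegernaberjiang} and \eqref{ymbounds}. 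This is exactly the structure that makes a no-energy-loss argument go through summand by summand.

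First I would fix the bubble-tree picture: by Theorem \ref{mainconvgethm} the metrics $(M_j,g_j)$ converge to the orbifold $M_\infty$ modulo Ricci-flat ALE bubbles, and by the blow-up analysis of Section \ref{treeidentity} the connections $A_j$ converge to $A_\infty$ modulo the three types of Yang-Mills bubbles listed in the introduction, with all bubble scales separated. Then I would decompose $M_j$ into the "body" (a fixed compact piece converging smoothly to a compact piece of $M_\infty$ away from the finitely many blow-up points), the bubble regions (each converging, after rescaling, smoothly on compact subsets to the corresponding bubble space $X_{\eta,k,a_l}$ or $\mathbb{R}^4$ with its limiting connection and flat/ALE metric), and the neck regions. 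On the body and on each bubble region, the integrand $[ch(E_j)\,td(TM_j)]_2$ is a fixed universal polynomial in curvature quantities that converge in $C^\infty_{loc}$, so $\int$ over these pieces converges to the corresponding term: $\int_{M_\infty}[ch(E_\infty)\,td(TM_\infty)]_2$ for the body and $\bar{\chi}(X_{\eta,k,a_l},A_{\omega^{a_l,\eta,k}})$ (resp. $\bar{\chi}(\mathbb{C}^2,A_{\omega_{b_l,k}})$) for each bubble — here one uses that on a Ricci-flat ALE space the relevant curvature integrals are finite, which is part of Theorem \ref{mainconvgethm}, and that on $\mathbb{C}^2$ with a Yang-Mills bubble $F_A\in L^2$ the form $[ch\,td]_2$ reduces essentially to $-\mathrm{tr}_{\mathfrak g}(F_A\wedge F_A)/8\pi^2$ (times a Chern-character coefficient) plus curvature-of-$\mathbb{C}^2$ terms which vanish.

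The main obstacle — and the only place where real work is needed — is showing that the contribution of the neck regions tends to zero. For the Yang-Mills part this is precisely the no-energy-loss statement proved for Theorem \ref{degeymenergyintr}: the neck integral of $|F_{A_j}|^2$ vanishes in the limit, by the decay estimates coming from Lemma \ref{keyymequ}. For the Riemannian part, the analogous no-energy-loss for $\int|Rm(g_j)|^2$ over necks is exactly the content of the classical curvature energy identity in Theorem \ref{mainconvgethm}. Since the pointwise bound above dominates $|[ch(E_j)\,td(TM_j)]_2|$ by $C(|Rm(g_j)|^2+|F_{A_j}|^2)$, the neck contribution of the integrand is squeezed to zero by these two facts together. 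Assembling the body limit, the bubble limits, and the vanishing neck contribution yields the stated identity; the bookkeeping of which bubbles sit over smooth points ($b_l$) versus orbifold points ($a_l$), and the matching of the domains $X_{\alpha,k,a_l}=\mathbb{C}^2$, $X_{\beta,k,a_l}$ ALE, $X_{\gamma,k,a_l}=\mathbb{C}^2/\Gamma$, is inherited verbatim from Theorem \ref{degeymenergyintr} and Theorem \ref{mainconvgethm}, so no new combinatorial input is required. I would also remark that, because $[ch\,td]_2$ is not sign-definite, one cannot argue by monotonicity alone — the argument genuinely needs the two-sided $L^1$ domination plus the separate no-loss statements, which is why it is phrased as a consequence of the two quantization theorems rather than proved from scratch.
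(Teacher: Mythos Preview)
Your proposal is correct and follows essentially the same approach as the paper. The paper does not give a detailed proof of Theorem \ref{appli33} but simply remarks that, because of the pointwise bound $\int_K [ch(E)\,td(TM)]_2 \le C\int_K(|Rm(g)|^2+|F_A|^2)\,dV_g$, the quantization for $[ch(E)\,td(TM)]_2$ follows ``similarly to Theorem \ref{degeymenergycor}'' by applying Theorem \ref{degeymenergyintr} together with the Riemannian curvature energy identity in Theorem \ref{mainconvgethm}; your body/bubble/neck decomposition and the use of the two no-loss statements to squeeze the neck contribution to zero is exactly the argument the paper has in mind, spelled out in more detail than the paper itself provides.
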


\begin{rem}\label{keyrem1111}
When the underlying space $X$ is not a compact manifold, in general, the integral $\bar{\chi}(X,A)$ of the 4-form $[ch(E)td(TX)]_2$ in Theorem \ref{appli33} may depend on the metric on $X$ and the connection in $E$. For a connection $A$ in $E$ over $(X,h)$, it equals to
\begin{eqnarray*}
\int_{X}\frac{r}{12}\left((c_1(X,h))^2+c_2(X,h)\right)+\frac{1}{2}c_1(X,h)c_1(E,A)+\frac{1}{2}(c_1(E,A))^2-c_2(E,A),
\end{eqnarray*}
where $r$ is rank of the bundle $E$, $c_k(X,h)$ and $c_k(E,A)$ are the standard integrands defining the Chern classes
in Chern-Weil theory, and
\begin{eqnarray*}
c_2(E,A)-\frac{1}{2}(c_1(E,A))^2=\frac{1}{8\pi^2}tr(F_A\wedge F_A).
 \end{eqnarray*}
 Here $tr$ is just the usual trace operator for complex matrix, for example, on the Lie
algebra $\mathfrak{u}(n)$ of skew adjoint matrices, $tr(\xi^2 ) = -|\xi|^2.$
\end{rem}

Firstly, notice that the Hirzebruch-Riemann-Roch theorem was proved to hold for some singular spaces, see e.g. \cite{HRRthmorbifold79,BlacheChern96}. In particular, it holds for complex orbifolds, see \cite{BlacheChern96}. Let $O$ be a compact $m$-dimensional complex orbifold with isolated orbifold singularities $x_1, \cdots, x_s$, and $E$ be a holomorphic vector bundle over $O$. Then
\begin{eqnarray}\label{eulerformula222}
\chi(O;E) = \chi_{orb}(O;E) +\sum_{k=1}^{s}\mu_{O,x_k}(E),
\end{eqnarray}
where
\begin{eqnarray*}
\chi_{orb}(O;E)=\int_{O}[ch(E)td(TO)]_m
\end{eqnarray*}
and the correction terms $\mu_{O,x_k}(E)$ are rational numbers related to the local fundamental groups. For the precise statement and proof, we refer to Section 3 of \cite{BlacheChern96}. Here a holomorphic bundle on the complex orbifold $O$ is a holomorphic vector bundle $E$ on the
regular part of $O$ such that for each local uniformization  $\pi_{o_k}:\widetilde{U_{o_k}}\rightarrow O$
of a singular point $o_k$, the pull-back $\pi^*E$  on $\widetilde{U_{o_k}}\setminus \pi^{-1}(o_k) $ can be extended to the whole $\widetilde{U_{o_k}}$, see e.g. \cite{Tian1990}. The notion of vector bundles over Riemannian orbifolds is defined in a similar way.

Secondly, when $(X,h)$ is a Ricci flat ALE K\"{a}hler manifold (orbifold) and $A$ is a connection in $E$ over $X$ as in Theorem \ref{appli33}, note that $c_1(X,h)=0$, then we have (see Remark \ref{keyrem1111})
\begin{eqnarray}\label{eulerformula333}
\bar{\chi}(X,A)&=&\int_{X}\frac{r}{12}c_2(X,h)+\frac{1}{2}(c_1(E,A))^2-c_2(E,A)\\
&=&\int_{X}\frac{r}{12} \cdot \frac{1}{8\pi^2}|Rm(h)|^2dV_h-\frac{1}{8\pi^2}\int_{X}tr(F_A\wedge F_A).\nonumber
\end{eqnarray}
Here we have used the fact that
\begin{eqnarray*}
\frac{1}{8\pi^2}|Rm(h)|^2dV_h= \left[a_m(c_1(X,h))^2+b_mc_2(X,h)\right]\wedge (\omega_h)^{n-2}
\end{eqnarray*}
holds pointwise for K\"{a}hler-Einstein metrics, where $\omega_h$ is the K\"{a}hler form corresponding to $h$, $a_m$ and $b_m$ are constants depending only on the complex dimension $m$, see e.g. \cite{2013Volume}. In particular, $a_2=0$, $b_2=1$.

Thirdly, in the case that $M$ is $\mathbb{C}^2$, or $\mathbb{C}^2/\Gamma$ (equipped with the flat metric), let $A$ be a connection in some bundle $E$ over $M$, then
\begin{eqnarray}\label{eulerformula4444}
\bar{\chi}(M,A)&=&\int_{M}\frac{1}{2}(c_1(E,A))^2-c_2(E,A)\\&=& -\frac{1}{8\pi^2} \int_{M}tr(F_A\wedge F_A).\nonumber
\end{eqnarray}

\subsection{Quantization results for holomorphic line bundles}

Inspired and enlightened by the beautiful works about line bundles over K\"{a}hler manifolds by Tian, Donaldson-Sun etc, we focus on holomorphic line bundles in this subsection.

Let $(M_j, J_j, g_j)$ be a sequence of K\"{a}hler-Einstein surfaces as in Theorem \ref{appli33}, and let $L_j$ be a holomorphic line bundle over $M_j$. $L_j$ is stable, since its rank is 1, see e.g. \cite{holobundle96book}. Therefore there exists a Hermitian-Yang-Mills
connection $A_j$ in $L_j$, see e.g. \cite{1985Anti,UY1986,Donaldson87bundle}. Suppose $(M_j,J_j, g_j, L_j,A_j)$ converges to $(M_\infty, J_\infty,g_\infty, L_\infty,A_\infty)$ (modular bubbles) up to a subsequence, then we have the following quantization result for holomorphic line bundles.

\begin{thm}\label{appli44}
Let $(M_j,J_j, g_j, L_j,A_j)$ and all the other notations be as above. Then
\begin{eqnarray*}
\lim_{j\rightarrow \infty}\int_{M_j} [ch(L_j)td(TM_j)]_2 &=&\int_{M_\infty}[ch(L_\infty)td(TM_\infty)]_2 \\
& &+\sum_{l=1}^{v}\sum_{k=1}^{N_{b_l}}\bar{\chi}(\mathbb{C}^2,A_{\omega_{b_l,k}}) \\
& & +\sum_{l=1}^{u}\sum_{\eta=\alpha,\beta,\gamma}\sum_{k=1}^{N_{a_l,\eta}}\bar{\chi}(X_{\eta,k,a_l},A_{\omega^{a_l,\eta,k}}),
\end{eqnarray*}
where $\bar{\chi}(X,A)$ is the integral of the corresponding 4-form $[ch(L)td(TX)]_2$ for the connection $A$ in $L$ over the underlying space $X$ as before.
\end{thm}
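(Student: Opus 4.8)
\emph{Proof proposal.} The plan is to recognize Theorem~\ref{appli44} as the rank-one specialization of Theorem~\ref{appli33}, so that the work reduces to checking that a Hermitian--Yang--Mills connection in a holomorphic line bundle satisfies the hypotheses of that theorem (equivalently, of Theorem~\ref{degeymenergyintr}). First I would record the structural facts already indicated in the text: being of rank one, $L_j$ is stable, so by \cite{1985Anti,UY1986,Donaldson87bundle} it carries a Hermitian--Yang--Mills connection $A_j$, namely the Chern connection of the Hermitian--Einstein metric; its curvature $F_{A_j}$ is an imaginary $(1,1)$-form with $\Lambda_{\omega_j}F_{A_j}=i\,c_j$ constant, and the K\"{a}hler identities then give $D_{A_j}^{*}F_{A_j}=0$. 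Thus $A_j$ is a Yang--Mills connection, and Theorems~\ref{degeymenergyintr} and~\ref{appli33} apply to $(M_j,J_j,g_j,L_j,A_j)$ as soon as the energy bound \eqref{ymbounds} is in force.

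Next I would verify that bound. Decompose $F_{A_j}=F_{A_j}^{+}+F_{A_j}^{-}$; on a K\"{a}hler surface a real $(1,1)$-form has self-dual part equal to its pure-trace part and anti-self-dual part equal to its primitive part, so $F_{A_j}^{+}=\tfrac12(\Lambda_{\omega_j}F_{A_j})\,\omega_j$, whence $\int_{M_j}|F_{A_j}^{+}|^2\,dV_{g_j}$ is a universal multiple of $c_j^2\,\mathrm{vol}(M_j,g_j)$. Here $c_j$ is pinned down by $\deg_{\omega_j}L_j$ and $\mathrm{vol}(M_j,g_j)$ through Chern--Weil, while $\mathrm{vol}(M_j,g_j)$ is bounded above by the diameter and curvature bounds and below by the non-collapsing hypothesis; meanwhile $\int_{M_j}\big(|F_{A_j}^{+}|^2-|F_{A_j}^{-}|^2\big)\,dV_{g_j}=4\pi^2 p_1(L_j)$ is topological. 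Adding these controls $\mathcal{YM}(A_j)=\int_{M_j}|F_{A_j}|^2\,dV_{g_j}$ once the topology of $L_j$ stays bounded, which is precisely the standing ``uniform $L^2$ Yang--Mills curvature bound''; alternatively, this step is simply subsumed under that hypothesis.

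With \eqref{ymbounds} in hand, Theorem~\ref{degeymenergyintr} (after passing to a subsequence) produces exactly the convergence modular bubbles of $(M_j,J_j,g_j,L_j,A_j)$ to $(M_\infty,J_\infty,g_\infty,L_\infty,A_\infty)$ assumed in the statement, and Theorem~\ref{appli33} applied with $E_j=L_j$ yields the displayed identity with $\bar{\chi}(X,A)$ equal to the integral of $[ch(L)\,td(TX)]_2$. Since a line bundle has $c_2(L)=0$, the general expression of Remark~\ref{keyrem1111} collapses to precisely $[ch(L)\,td(TX)]_2$; on the flat bubbles $\mathbb{C}^2$ and $\mathbb{C}^2/\Gamma$ it reduces to $-\tfrac{1}{8\pi^2}\int \mathrm{tr}(F_A\wedge F_A)$ by \eqref{eulerformula4444}, and on the Ricci-flat ALE K\"{a}hler bubbles it is the quantity in \eqref{eulerformula333}. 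So nothing beyond invoking Theorem~\ref{appli33} is needed for the identity itself.

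The main (and essentially the only) obstacle is conceptual rather than technical: one must check that the blow-up scheme underlying Theorems~\ref{degeymenergyintr}--\ref{appli33} is compatible with the Chern-connection structure, so that the limit connection $A_\infty$ and every bubble connection $A_{\omega^{a_l,\eta,k}}$ is again integrable (defines a holomorphic structure) on the corresponding K\"{a}hler bubble space. This integrability, together with the rank-one reduction above, is exactly what makes the subsequent Hirzebruch--Riemann--Roch computations of Theorems~\ref{intromainapl33} and~\ref{mainaplintr11} go through, and should be recorded explicitly at this stage. Aside from that, the argument is a direct specialization of Theorem~\ref{appli33}.
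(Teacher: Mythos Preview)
Your proposal is correct and takes essentially the same approach as the paper: Theorem~\ref{appli44} is obtained simply by applying Theorem~\ref{appli33} with $E_j=L_j$, once one notes that a Hermitian--Yang--Mills connection is Yang--Mills (so the hypotheses of Theorem~\ref{degeymenergyintr} are met). The paper does not give a separate proof; it treats the result as an immediate specialization of Theorem~\ref{appli33} under the standing assumption of a uniform $L^2$ curvature bound and convergence modulo bubbles.

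Two minor remarks on your write-up. First, your second paragraph deriving the energy bound is unnecessary here, since the uniform $L^2$ Yang--Mills curvature bound is part of the standing hypotheses (as you yourself note at the end of that paragraph). Second, and more importantly, the ``main obstacle'' you flag --- integrability of the limit and bubble connections --- is \emph{not} needed for Theorem~\ref{appli44} itself: the quantity $\bar\chi(X,A)$ is defined purely as the integral of the Chern--Weil $4$-form $[ch(L)\,td(TX)]_2$ for an arbitrary connection $A$, so the identity is a statement about differential forms and requires no holomorphic structure on the bubbles. That concern becomes relevant only later, when one wants to interpret $\int_{M_\infty}[ch(L_\infty)\,td(TM_\infty)]_2$ via the orbifold Hirzebruch--Riemann--Roch formula \eqref{eulerformula222} in the passage to Theorem~\ref{appli55}.
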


Firstly, by the arguments in previous subsections, in particular \eqref{eulerformula222}, we have
\begin{eqnarray}\label{eulerformula22211}
\chi(M_\infty;L_\infty) = \int_{M_\infty}[ch(L_\infty)td(TM_\infty)]_2 +\sum_{l=1}^{u}\mu_{M_{\infty},a_l}(L_\infty),
\end{eqnarray}
if $M_\infty$ has orbifold singularities $a_1,\cdots,a_u$.

Secondly, when $(X,h)$ is a Ricci flat ALE K\"{a}hler manifold (orbifold) and $A_{\omega^\beta}$ is a bubble connection in some holomorphic line bundle $L$ over $X$, noticing that the Ricci curvature of $X$ is zero and the rank of $L$ is 1, then  by applying (\ref{eulerformula333}) we have that
\begin{eqnarray}\label{importformula111}
\bar{\chi}(X,A_{\omega^\beta})&=&\int_{X}\frac{1}{12}c_2(X,h)+\frac{1}{2}(c_1(L,A_{\omega^\beta}))^2\nonumber\\
&=&\int_{X}\frac{1}{12}\cdot\frac{1}{8\pi^2}|Rm(h)|^2dV_h-\frac{1}{8\pi^2}\int_{X}(F_{A_{\omega^\beta}}\wedge F_{A_{\omega^\beta}}).
\end{eqnarray}

Thirdly, for a bubble connections $A$ in a holomorphic line bundle $L$ on $\mathbb{C}^2$ or $\mathbb{C}^2/\Gamma$, we have by (\ref{eulerformula4444}) that
\begin{eqnarray*}
\bar{\chi}(M,A)=-\frac{1}{8\pi^2}\int_{M}(F_{A}\wedge F_{A}),
\end{eqnarray*}
for the cases that $M$ is $\mathbb{C}^2$ or $\mathbb{C}^2/\Gamma$.
By using the removable singularity Theorem \ref{remsingu} in Section \ref{Preliminaries111} and taking a lifting if necessary, we can get from $A$ a smooth connection $\bar{A}$ in some complex line bundle over $S^4$. Because $H^2(S^{4},\mathbb{Z})=0,$ all complex line bundles on $S^4$ are trivial e.g. \cite{holobundle96book}, it follows that
\begin{eqnarray*}
\int_{S^4}(F_{\bar{A}}\wedge F_{\bar{A}})=0,
\end{eqnarray*}
and hence
\begin{eqnarray*}
\bar{\chi}(M,A)=0
\end{eqnarray*}
when $M$ is $\mathbb{C}^2$ or $\mathbb{C}^2/\Gamma$.

Thus we have the following identity by combining the Hirzebruch-Riemann-Roch theorems stated above (see (\ref{HRR111}) and (\ref{eulerformula22211})) and Theorem \ref{appli44}. Note that $\bar{\chi}(M,A)=0$ for connections with finite Yang-Mills energy in holomorphic line bundles over $M$ when $M$ is $\mathbb{C}^4$ or $\mathbb{C}^4/\Gamma$.

\begin{thm}\label{appli55}
Let $(M_j,J_j, g_j, L_j,A_j)$ and all the other notations as above. Then we have
\begin{eqnarray*}
\lim_{j\rightarrow \infty}\chi(M_j;L_j) =\chi(M_\infty;L_\infty) -\sum_{l=1}^{u}\mu_{M_{\infty},a_l}(L_\infty) +\sum_{l=1}^{u}\sum_{k=1}^{N_{a_l,\beta}}\bar{\chi}(X_{\beta,k,a_l},A_{\omega^{a_l,\beta,k}}),
\end{eqnarray*}
 where $\bar{\chi}(X,A)$ and $\mu_{M_\infty,a_l}(L_\infty)$ are defined as before.
\end{thm}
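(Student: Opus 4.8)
The plan is to obtain Theorem~\ref{appli55} by a direct substitution, combining the energy identity of Theorem~\ref{appli44} with the classical and orbifold Hirzebruch--Riemann--Roch theorems and the vanishing of the bubble contributions over $\mathbb{C}^2$ and $\mathbb{C}^2/\Gamma$ recorded just above. No new analysis is required beyond what is already in place; the argument is essentially bookkeeping.

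First I would apply the classical Hirzebruch--Riemann--Roch theorem \eqref{HRR111} on each $M_j$: since $(M_j,J_j,g_j)$ is a closed K\"ahler surface and $L_j$ a holomorphic line bundle, $\chi(M_j;L_j)=\int_{M_j}[ch(L_j)\,td(TM_j)]_2$, so that $\lim_{j\to\infty}\chi(M_j;L_j)=\lim_{j\to\infty}\int_{M_j}[ch(L_j)\,td(TM_j)]_2$; Theorem~\ref{appli44} then evaluates this limit as $\int_{M_\infty}[ch(L_\infty)\,td(TM_\infty)]_2$ plus the three families of bubble integrals $\bar\chi(\mathbb{C}^2,A_{\omega_{b_l,k}})$ and $\bar\chi(X_{\eta,k,a_l},A_{\omega^{a_l,\eta,k}})$ with $\eta=\alpha,\beta,\gamma$. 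Next I would discard the unwanted bubble terms: the type-$\alpha$ bubbles and the bubbles at the regular blow-up points $b_l$ live over $\mathbb{C}^2$, and the type-$\gamma$ bubbles live over $\mathbb{C}^2/\Gamma$, so by the removable-singularity argument preceding the statement---extending the connection across the puncture via Theorem~\ref{remsingu}, passing to a finite cover when $\Gamma$ is nontrivial, and using that every complex line bundle on $S^4$ is topologically trivial since $H^2(S^4;\mathbb{Z})=0$---each of these $\bar\chi$-integrals vanishes, leaving only the type-$\beta$ terms over the Ricci-flat K\"ahler ALE bubble spaces. Finally I would rewrite the limit-orbifold term by the orbifold Hirzebruch--Riemann--Roch formula \eqref{eulerformula222}, i.e.\ \eqref{eulerformula22211}, which applies because $M_\infty$ is a compact complex orbifold with isolated singularities $a_1,\dots,a_u$ and $L_\infty$ extends to a holomorphic orbifold line bundle: this gives $\int_{M_\infty}[ch(L_\infty)\,td(TM_\infty)]_2=\chi(M_\infty;L_\infty)-\sum_{l=1}^u\mu_{M_\infty,a_l}(L_\infty)$. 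Substituting the last two facts into the identity of Theorem~\ref{appli44} yields the asserted formula.

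Since all the substantial work is already packaged into Theorem~\ref{appli44}---hence ultimately into Theorem~\ref{degeymenergyintr} and the Riemannian curvature energy identity of Theorem~\ref{mainconvgethm}---I do not expect a genuine obstacle here. The only steps that warrant a line of justification are the vanishing of the $\mathbb{C}^2$- and $\mathbb{C}^2/\Gamma$-bubble contributions, where one must treat the quotient case with a little care by first lifting to a finite cover before invoking the triviality of line bundles over $S^4$, and the verification that $L_\infty$ is genuinely a holomorphic orbifold line bundle so that the orbifold Hirzebruch--Riemann--Roch formula is legitimately applicable---which is guaranteed by the convergence theory underlying the setup.
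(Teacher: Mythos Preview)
Your proposal is correct and follows essentially the same approach as the paper: combine Theorem~\ref{appli44} with the classical Hirzebruch--Riemann--Roch formula \eqref{HRR111} on each $M_j$, the orbifold version \eqref{eulerformula22211} on $M_\infty$, and the vanishing $\bar\chi(M,A)=0$ for the $\mathbb{C}^2$- and $\mathbb{C}^2/\Gamma$-bubbles established immediately before the statement. The paper treats this as a direct assembly of the preceding pieces, exactly as you do.
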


Next, we recall a classical vanishing theorem which will be used later. For more about vanishing theorems, we refer to e.g. \cite{Kodaira1953,mumfordbook66,vanish85book,vanishbook92,holobundle96book}.

\begin{thm}\label{vanishingthm00}(\cite[Chapter 7]{vanish85book})
Let $L$ be an ample holomorphic line bundle on an irreducible, $m$-dimensional, normal projective-algebraic variety X. If $X$ is nonsingular or X has only rational singularities, then
\begin{eqnarray*}
H^q(X;L^*)=0, \quad {\it for}\  q<m,
\end{eqnarray*}
and
\begin{eqnarray*}
H^q(X;K_X\otimes L)=0, \quad {\it for}\  q>0.
\end{eqnarray*}

\end{thm}

Now we state some important theorems about the convergence of K\"{a}hler-Einstein manifolds \cite{Tian1990, tian13c0, DonaldsonSunacta, DonaldsonSunjdg}.
\begin{thm}\label{tianconvegthm} (\cite{anderson1989ricci,BKN,Tian1990,sunjdg16} etc.)
Given a sequence of  K\"{a}hler-Einstein Del Pezzo surfaces $(X_ i ,J _i,g_i )$ of degree $d$, then by passing to a subsequence if necessary, it converges in the Gromov-Hausdorff sense to an irreducible K\"{a}hler-Einstein log Del Pezzo surface $(X _\infty ,J_\infty,g_\infty)$ of degree $d$.
\end{thm}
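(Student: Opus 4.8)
The plan is to assemble this statement from the non-collapsed Einstein compactness machinery recalled in Theorem \ref{mainconvgethm} together with the algebro-geometric identification of the limit. First I would record the a priori bounds. Normalizing $\mathrm{Ric}(g_i)=g_i$, one has $\rho_{g_i}=\omega_{g_i}$, hence $c_1(X_i)=[\omega_{g_i}/2\pi]$ and $\mathrm{vol}(X_i,g_i)=\tfrac12\int_{X_i}\omega_{g_i}^2$ is a fixed positive multiple of the degree $d=c_1(X_i)^2$; since $1\le d\le 9$ for Del Pezzo surfaces, the volumes are uniformly bounded above and below, so the sequence is non-collapsed. Bonnet--Myers applied to $\mathrm{Ric}=g_i>0$ in real dimension $4$ gives $\mathrm{diam}(X_i,g_i)\le\pi\sqrt{3}$, and Chern--Weil (the Gauss--Bonnet and signature formulas) bounds $\int_{X_i}|Rm(g_i)|^2dV_{g_i}$ in terms of $\chi(X_i)$ and $\tau(X_i)$, both determined by $d$. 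Thus the uniform hypotheses \eqref{einstincondintr} and \eqref{cheegernaberjiang} are in force.

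Second, I would invoke the non-collapsed Einstein compactness theory --- Anderson \cite{anderson1989ricci}, Bando--Kasue--Nakajima \cite{BKN}, Cheeger--Colding--Tian and Cheeger--Naber \cite{cheeger2015regularity}, as packaged in Theorem \ref{mainconvgethm} --- to pass to a subsequence converging in the Gromov--Hausdorff sense to a compact length space $X_\infty$ which is a Kähler--Einstein orbifold with finitely many isolated singular points, each modeled on $\mathbb{C}^2/\Gamma$ with $\Gamma\subset U(2)$ finite, the convergence being smooth away from the singular (and bubble) set. Connectedness of $X_\infty$ follows from uniform non-collapsing; the complex structures $J_i$ converge to a complex structure $J_\infty$ on the regular part, and the orbifold Kähler--Einstein metric $g_\infty$ extends across the singular points.

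Third --- the substantive step --- I would upgrade $X_\infty$ from a metric orbifold to a normal projective variety and identify it as a log Del Pezzo surface of degree $d$. Following Tian's treatment of Fano surfaces \cite{Tian1990} and its refinements \cite{DonaldsonSunacta,DonaldsonSunjdg,sunjdg16}: via a partial $C^0$/Bergman kernel estimate for high powers $K_{X_\infty}^{-p}$ one shows that pluri-anticanonical sections of $K_{X_i}^{-p}$ converge to sections of an ample line bundle on $X_\infty$ giving a projective embedding, so $X_\infty$ is a normal projective surface whose quotient singularities are klt; that is, $X_\infty$ is a log Del Pezzo surface. Normality together with connectedness gives irreducibility. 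Finally $K_{X_\infty}^{-1}\cdot K_{X_\infty}^{-1}=d$ because $c_1^2$ is continuous along this non-collapsed convergence: it can be read off from the volume, which does not drop in the non-collapsed case, or equivalently from the stabilization of $\dim H^0(X_i;K_{X_i}^{-p})$ for large $p$.

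The main obstacle is this third step: showing that the Gromov--Hausdorff limit of the $(X_i,J_i,g_i)$ carries a compatible projective-algebraic structure and that this structure is precisely that of a log Del Pezzo surface of the prescribed degree. This is exactly where the partial $C^0$-estimate and the local analysis of pluri-anticanonical sections near the orbifold points are needed, i.e. where the deep inputs of \cite{Tian1990,DonaldsonSunacta,sunjdg16} enter; by comparison the first two steps are routine applications of Bonnet--Myers, Chern--Weil, and the standard non-collapsed Einstein compactness theorem.
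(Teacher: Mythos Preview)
Your outline is correct and follows exactly the route of the cited references; note, however, that the paper does not give its own proof of this theorem --- it is stated as a known result with citations to \cite{anderson1989ricci,BKN,Tian1990,sunjdg16} and used as input for the applications in Section~\ref{apllisec}. Your sketch accurately reconstructs the argument from those sources: the a priori bounds (volume from $c_1^2=d$, diameter from Bonnet--Myers, $L^2$ curvature from Gauss--Bonnet), the orbifold compactness of \cite{anderson1989ricci,BKN,Tian1990}, and the algebro-geometric identification of the limit via the partial $C^0$-estimate as in \cite{Tian1990,DonaldsonSunacta,sunjdg16}, and you correctly flag this last step as the substantive one.
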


In the above, a log Del Pezzo surface is a normal projective surface with quotient singularities and ample anti-canonical divisor.

\begin{thm} \label{Donldsonsuncoverge} (\cite{DonaldsonSunacta})
 Given a sequence of $m$-dimensional K\"{a}hler-Einstein manifolds $(X_ i, J _i, g_i)$ with Ricci curvature 1 (resp. $-1$), with bounded diameters (or volumes) and satisfying the volume non-collapsing property, by passing to a subsequence, it
converges in the Gromov-Hausdorff sense to a projective variety with log-terminal singularities $(X _\infty ,J_\infty )$
endowed with a weak K\"{a}hler-Einstein metric $g_\infty$ (e.g. \cite{EGZ09jams}). Moreover,
there exist integers $k$ and $N$, depending only on $m$, so that we could embed $X_i \ (i \in \mathbb{N}\bigcup\{\infty\})$ into $\mathbb{CP}^N$ using the orthonormal basis of
$H^0(X_i; K_{X_i}^{-k})$ (resp. $H^0(X_i; K_{X_i}^{k})$) such that $X_i$ converges to $X_\infty$ as varieties in $\mathbb{CP}^N$.

\end{thm}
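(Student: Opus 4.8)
The plan is to reconstruct the Donaldson--Sun argument \cite{DonaldsonSunacta}; the full proof is carried out there, so here I only indicate the skeleton. First I would invoke the Cheeger--Colding--Tian structure theory for non-collapsed Gromov--Hausdorff limits of K\"{a}hler manifolds with a two-sided Ricci bound: after passing to a subsequence, $(X_i,J_i,g_i)$ converges in the Gromov--Hausdorff sense to a compact metric space $(X_\infty,g_\infty)$ that splits as a disjoint union of an open, smooth, K\"{a}hler--Einstein \emph{regular part} $\mathcal R$ and a closed \emph{singular set} $\mathcal S$ of Hausdorff codimension at least four, the convergence being $C^\infty$-Cheeger--Gromov away from $\mathcal S$; moreover every tangent cone of $X_\infty$ is a metric cone, and on a K\"{a}hler limit these cones inherit a complex structure making them normal affine varieties. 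The non-collapsing hypothesis is exactly what keeps $\mathcal S$ genuinely small and the regularity scale uniform.

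The heart of the matter is the \emph{partial $C^0$-estimate}. Endow $K_{X_i}^{\mp 1}$ with its Hermitian--Einstein metric, whose curvature is a positive multiple of the K\"{a}hler form, and aim to show that there are a large integer $k$ and a constant $c>0$, both depending only on $m$, so that the Bergman function $\rho_{i,k}(x)=\sum_\alpha|s_\alpha(x)|^2$ of an $L^2$-orthonormal basis of $H^0(X_i;K_{X_i}^{\mp k})$ obeys $\rho_{i,k}\ge c$ everywhere, uniformly in $i$. Two ingredients go into this: (i) a \emph{peak section} construction, in which near any point one builds an almost-holomorphic unit section concentrated at that point by transplanting the Gaussian model section from $\mathbb C^m$ (or from the relevant tangent cone, via the cone structure furnished by Cheeger--Colding) with a cutoff; and (ii) H\"{o}rmander's weighted $L^2$-estimate for $\bar\partial$, which corrects the almost-holomorphic section to an honest holomorphic section with controlled $L^2$ norm, followed by a Bochner inequality and Moser iteration that upgrade the $L^2$ bound to $C^0$ and $C^1$ control. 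Combined with the volume bound and Riemann--Roch this also bounds $\dim H^0(X_i;K_{X_i}^{\mp k})$ uniformly, so the Kodaira maps $\iota_i\colon X_i\hookrightarrow\mathbb{CP}^N$ (with $N$ independent of $i$ after one further subsequence) are uniformly non-degenerate embeddings of uniformly bounded geometry.

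With the uniform embeddings in hand I would invoke boundedness of the degrees $\deg\iota_i(X_i)$ --- a topological quantity controlled by the volume bound --- together with compactness of the Chow variety of cycles of bounded degree and dimension in $\mathbb{CP}^N$: after a subsequence and a unitary change of homogeneous coordinates the subvarieties $\iota_i(X_i)$ converge to a projective variety $W$ of the same degree. One then identifies $W$, with its reduced structure and the limit of the normalized Fubini--Study pullback metrics, with the Gromov--Hausdorff limit $X_\infty$ --- the peak sections pass to the limit and separate points and tangent directions there, so the limiting Kodaira map is an embedding --- deduces that $X_\infty$ is normal, and checks that $g_\infty$ is a weak K\"{a}hler--Einstein metric in the pluripotential sense of Eyssidieux--Guedj--Zeriahi \cite{EGZ09jams}. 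The singularities are log-terminal because, by the cone structure from Cheeger--Colding, each is modelled on a Ricci-flat K\"{a}hler cone (a normal affine variety with a K\"{a}hler--Einstein cone metric), and such cone singularities have discrepancies controlled by the cone's volume density, which is positive by non-collapsing.

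The main obstacle is the partial $C^0$-estimate, and inside it the H\"{o}rmander step: the weight function and the cutoffs must be produced with estimates \emph{uniform across the degenerating sequence}, which forces a delicate analysis of the neck regions of $(X_i,g_i)$ near $\mathcal S$ --- precisely where the sharp Cheeger--Colding input (almost-metric-cone behaviour, cutoff functions with $L^2$-small Laplacian) and the non-collapsing assumption are indispensable. Everything afterwards --- the degree bound, Chow compactness, identification of the limit, and the klt property --- is comparatively soft.
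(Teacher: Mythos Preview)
Your sketch is a faithful outline of the Donaldson--Sun argument in \cite{DonaldsonSunacta}, and there is no genuine gap in the strategy you describe. Note, however, that the paper under review does not itself prove this theorem: it is quoted verbatim as a result from \cite{DonaldsonSunacta} and used as a black box in Section~\ref{apllisec}, so there is no ``paper's own proof'' to compare against. Your reconstruction --- Cheeger--Colding--Tian structure theory, the partial $C^0$-estimate via H\"{o}rmander and peak sections, Chow compactness, and identification of the Gromov--Hausdorff limit with the algebraic one --- is exactly the route taken in the cited source, so in that sense your proposal matches the intended proof.
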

For more related convergence results of K\"{a}hler-Einstein metrics see e.g. \cite{tian13c0,DonaldsonSunacta,DonaldsonSunjdg,liusSze2021cpam}.

\begin{rem}\label{reminportant}
When the dimension $m$ is 2, since a two-dimensional log terminal singularity is
a quotient singularity \cite{Kawamata1988}, all singularities are (isolated) quotient singularities. Hence they are rational singularities, see e.g. Page 50 of \cite{vanishbook92}. So in the 2-dimensional case the limit space in Theorem \ref{Donldsonsuncoverge} is an irreducible normal projective variety with at worst isolated rational singularities.

\end{rem}

By combining the above results, we can prove Theorem \ref{intromainapl33}.

\begin{proof}[\textbf{Proof of Theorem \ref{intromainapl33}}]
Notice that Theorem \ref{tianconvegthm}, Theorem \ref{Donldsonsuncoverge} and Remark \ref{reminportant} indicate that $M_\infty$ is a normal projective variety with at most rational singularities. And $M_j$ is a projective-algebraic manifold for every $j$ thanks to the Kodaira embedding theorem and a theorem of Chow, see e.g. Chapter 5 of the book by Huybrechts \cite{Huybrechts05book}. It follows that we can apply Theorem \ref{vanishingthm00} to the corresponding line bundles $L_j$ over $M_j$ and $L_\infty$ over $M_\infty$. Recall that by the definition we have that for a holomorphic line bundle $L$ over a $m$-dimensional normal projective-algebraic variety $X$,
\begin{eqnarray*}
\chi(X;L)=\sum_{k=0}^{m}(-1)^k dim H^{k}(X;L).
\end{eqnarray*}
So by the vanishing Theorem \ref{vanishingthm00},
 if $L_j^{*}$  and $L_{\infty}^{*}$ are ample, then
 \begin{eqnarray*}
\chi(M_j;L_j)=dim H^{2}(M_j;L_j) \text{ and } \chi(M_\infty;L_\infty)=dim H^{2}(M_\infty;L_\infty);
\end{eqnarray*}
and if $L_j$ and $L_\infty$ are ample, then
$$\chi(M_j;K_{M_j}\otimes L_j)=dim H^{0}(M_j;K_{M_j}\otimes L_j),$$
and
$$\chi(M_\infty;K_{M_\infty}\otimes L_\infty)=dim H^{0}(M_\infty;K_{M_\infty}\otimes L_\infty).$$
Therefore, Theorem \ref{intromainapl33} follows immediately from Theorem \ref{appli55}.
\end{proof}

Now we extend Theorem \ref{intromainapl33} to higher dimensional cases.

\begin{thm}\label{highdimthm}
Let $(M_j, J_j,g_j)$ be a sequence of (complex) $m$-dimensional closed K\"{a}hler-Einstein manifolds with positive (or negative) scalar curvature as in Theorem \ref{Donldsonsuncoverge}, and moreover assume that they have uniform $L^{m}$ bound on Riemannian curvatures.
Let $L_j$ be a holomorphic line bundle over $M_j$, and $A_j$ be a Hermitian-Yang-Mills connection in $L_j$ with uniform $L^m$ Yang-Mills curvature bound. If $(M_j,J_j, g_j, L_j,A_j)$ converges modular bubble connections to some limit
$(M_\infty, J_\infty, g_\infty, L_\infty,A_\infty)$ up to a subsequence, then
\begin{eqnarray*}
\lim_{j\rightarrow \infty}\chi(M_j;L_j) =\chi(M_\infty;L_\infty) -\sum_{l=1}^{u}\mu_{M_{\infty},a_l}(L_\infty) +\sum_{l=1}^{u}\sum_{k=1}^{N_{a_l,\beta}}\bar{\chi}(X_{\beta,k,a_l},A_{\omega^{a_l,\beta,k}}),
\end{eqnarray*}
where $\{a_1,\cdots,a_u\}$ is the set of orbifold singularities in $M_\infty$, $X_{\beta,k,a_l}$ is a Ricci flat ALE bubble manifold (orbifold) at the singularity $a_l$, $\bar{\chi}(X,A)$ is the integral of the corresponding $2m$-form $[ch(L)td(TX)]_m$ for the connection $A$ in $L$ over the underlying space $X$ as before, and $\mu_{M_{\infty},a_l}(L_\infty)$ is a rational number related to the local fundamental group at $a_l$ as above.

\begin{itemize}
\item[I.] If $L_j^{*}$ and $L_\infty^{*}$ are ample, then
\begin{eqnarray*}
\lim_{j\rightarrow \infty} dim \ H^{m}(M_j;L_j)= dim\ H^{m}(M_\infty;L_\infty) &-&\sum_{l=1}^{u}\mu_{M_{\infty},a_l}(L_\infty)  \\ &+&\sum_{l=1}^{u}\sum_{k=1}^{N_{a_l,\beta}}\bar{\chi}(X_{\beta,k,a_l},A_{\omega^{a_l,\beta,k}}).
\end{eqnarray*}

\item[II.] If $K_{M_j}^{-1}\otimes L_j$ and $K_{M_{\infty}}^{-1}\otimes L_\infty$ are ample, then
\begin{eqnarray*}
\lim_{j\rightarrow \infty}dim\ H^{0}(M_j;L_j)= dim \ H^{0}(M_\infty;L_\infty) &-& \sum_{l=1}^{u}\mu_{M_{\infty},a_l}(L_\infty)  \\ &+&\sum_{l=1}^{u}\sum_{k=1}^{N_{a_l,\beta}}\bar{\chi}(X_{\beta,k,a_l},A_{\omega^{a_l,\beta,k}});
\end{eqnarray*}
\textbf{or equivalently}, if $L_j$ and $L_\infty$ are ample, and $A_j$ is a sequence of Hermitian-Yang-Mills connections in $K_{M_j}\otimes L_j$ which converge similarly as above, then
\begin{eqnarray*}
\lim_{j\rightarrow \infty}dim\ H^{0}(M_j;K_{M_j}\otimes L_j)&= &dim \ H^{0}(M_\infty;K_{M_\infty}\otimes L_\infty) \\ && -\sum_{l=1}^{u}\mu_{M_{\infty},a_l}(K_{M_\infty}\otimes L_\infty)  \nn \\
&&+\sum_{l=1}^{u}\sum_{k=1}^{N_{a_l,\beta}}\bar{\chi}(X_{\beta,k,a_l},A_{\omega^{a_l,\beta,k}}).\nonumber
\end{eqnarray*}
\end{itemize}

\end{thm}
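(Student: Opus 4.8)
The plan is to repeat the proof of Theorem \ref{intromainapl33} in the (complex) $m$-dimensional K\"{a}hler--Einstein setting, replacing the four-dimensional inputs by their higher-dimensional counterparts: the uniform $L^m$ (equivalently $L^{n/2}$ with $n=2m$) integral curvature bounds on $g_j$ and on $F_{A_j}$ supply a higher-dimensional bubble-tree/energy-quantization theory for Yang--Mills fields, while the algebro-geometric ingredients --- orbifold Hirzebruch--Riemann--Roch and the vanishing theorem --- carry over essentially verbatim. Concretely, the first step is to establish the higher-dimensional analogue of Theorems \ref{appli33} and \ref{appli44}: under the stated hypotheses,
\begin{eqnarray*}
\lim_{j\rightarrow \infty}\int_{M_j} [ch(L_j)td(TM_j)]_m &=&\int_{M_\infty}[ch(L_\infty)td(TM_\infty)]_m \\
&&+\sum_{l=1}^{v}\sum_{k=1}^{N_{b_l}}\bar{\chi}(\mathbb{C}^m,A_{\omega_{b_l,k}}) \\
&&+\sum_{l=1}^{u}\sum_{\eta=\alpha,\beta,\gamma}\sum_{k=1}^{N_{a_l,\eta}}\bar{\chi}(X_{\eta,k,a_l},A_{\omega^{a_l,\eta,k}}).
\end{eqnarray*}
This would follow from the higher-dimensional Yang--Mills energy quantization over non-collapsed degenerating Einstein $n$-manifolds (Theorem \ref{degeymenergyhd}, Remark \ref{rmkdegeymenergyhd}) together with the pointwise bound $\left|[ch(L)td(TX)]_m\right|\le C(|Rm|^m+|F_A|^m)$, which lets one pass the Chern--Weil $2m$-form to the limit bubble by bubble exactly as in the proof of Theorem \ref{appli33}; here the type-$\alpha$, type-$\gamma$ and away-from-singularity bubbles sit on $\mathbb{C}^m$ or $\mathbb{C}^m/\Gamma$ with the flat metric, whereas the type-$\beta$ bubbles sit on Ricci-flat K\"{a}hler ALE manifolds (orbifolds).

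Next I would discard the contributions of the flat bubbles. As in the discussion following Theorem \ref{appli44}, a bubble connection of finite Yang--Mills energy in a holomorphic line bundle over $\mathbb{C}^m$ or $\mathbb{C}^m/\Gamma$ extends --- by the removable-singularity Theorem \ref{remsingu}, after passing to a finite cover if necessary --- to a smooth connection in a complex line bundle over $S^{2m}$; since $H^2(S^{2m};\mathbb{Z})=0$ for $m\ge 2$, that line bundle is trivial, $c_1(L,A)$ is exact, and because the flat metric has $td=1$ one obtains $[ch(L)td]_m=(c_1(L,A))^m/m!$, whose integral vanishes. Thus only the type-$\beta$ terms $\bar{\chi}(X_{\beta,k,a_l},A_{\omega^{a_l,\beta,k}})$, attached to the Ricci-flat K\"{a}hler ALE bubbles, survive in the identity of the first step.

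Then I would invoke the orbifold Hirzebruch--Riemann--Roch theorem. By Theorem \ref{Donldsonsuncoverge}, combined with the uniform $L^m$ Riemannian curvature bound, $M_\infty$ is a normal projective variety whose singular set is a finite collection of isolated quotient singularities $a_1,\dots,a_u$, hence in particular a compact complex orbifold; quotient singularities are klt, so rational. Blache's orbifold Riemann--Roch formula \eqref{eulerformula222} applied to $L_\infty$ gives $\chi(M_\infty;L_\infty)=\int_{M_\infty}[ch(L_\infty)td(TM_\infty)]_m+\sum_{l=1}^{u}\mu_{M_\infty,a_l}(L_\infty)$, while the ordinary HRR \eqref{HRR111} applies to the smooth projective manifolds $M_j$ (projective by Kodaira embedding and Chow); feeding both into the previous two steps yields the stated quantization of $\lim_{j\rightarrow\infty}\chi(M_j;L_j)$.

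Finally, parts I and II follow from the vanishing Theorem \ref{vanishingthm00}, applicable here because $M_j$ is nonsingular and $M_\infty$ has only rational singularities: when $L_j^{*}$ and $L_\infty^{*}$ are ample it kills $H^q$ for $q<m$, so $\chi$ collapses to the single group $H^m$ (up to the sign $(-1)^m$); when $K^{-1}\otimes L$ is ample, writing $L=K\otimes(K^{-1}\otimes L)$ and using the second half of the theorem kills $H^q$ for $q>0$, so $\chi=\dim H^0$, and the ``or equivalently'' clause is merely the relabelling $L_j\leftrightarrow K_{M_j}^{-1}\otimes L_j$. Substituting these identifications into the $\chi$-quantization formula produces the displayed identities. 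I expect the genuine obstacle to lie entirely in the first step: establishing the higher-dimensional bubble-neck decomposition and, above all, the absence of Yang--Mills energy loss on the degenerating neck regions under only $L^{n/2}$ integral bounds on \emph{both} the Riemannian curvature and the Yang--Mills curvature --- where the degeneration of $g_j$ and of $A_j$ are intertwined and one must track the effect of the blowing-up Riemannian curvature on the asymptotics of $A_j$ over the necks, as in the analysis of Section \ref{treeidentity} --- and in confirming that the ambient bubbles so produced are Ricci-flat K\"{a}hler ALE, so that the Chern--Weil manipulations above are legitimate; once this is in hand, the algebro-geometric steps are formal.
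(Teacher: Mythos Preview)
Your proposal is correct and follows essentially the same approach as the paper: reduce to the proof of Theorem \ref{intromainapl33} by invoking the higher-dimensional energy quantization (Theorem \ref{degeymenergyhd}, Remark \ref{rmkdegeymenergyhd}), kill the type-$\alpha$ and type-$\gamma$ bubble contributions via $H^2(S^{2m};\mathbb{Z})=0$, observe that the singularities of $M_\infty$ are isolated quotient (hence rational) singularities so that orbifold HRR and Theorem \ref{vanishingthm00} apply, and then read off parts I and II from the vanishing theorem exactly as before. Your identification of the real work as lying in the higher-dimensional neck analysis is also what the paper signals by deferring to Remark \ref{hgdimrem}.
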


\begin{rem}
Its proof is similar to the one for Theorem \ref{intromainapl33}. One can see Remark \ref{rmkdegeymenergyhd} for the quantization of some key quantities in higher dimensions.
Notice that all the singularities of $M_\infty$ locally look like $\mathbb{C}^m/\Gamma$ for a finite group $\Gamma\subset U(m)$ and hence are isolated quotient singularities under the conditions of Theorem \ref{highdimthm}. $\Gamma\subset SO(2m)$ is well known, see e.g. \cite{anderson1989ricci}, and since $M_\infty$ is K\"{a}hler, $\Gamma\subset U(m)$. Therefore they are all rational singularities, see e.g. Page 50 of \cite{vanishbook92}. Then Theorem \ref{Donldsonsuncoverge} indicate that $M_\infty$ is a normal projective-algebraic variety with at worst rational singularities.

Because $H^2(S^{2m},\mathbb{Z})=0$ for $m>1$, all complex line bundles on $S^{2m}$ are trivial e.g. \cite{holobundle96book}, and $\mathbb{C}^m$ and $\mathbb{C}^m/\Gamma$ with the standard Euclidean metrics are flat, it is easy to see that
\begin{eqnarray*}
\bar{\chi}(\mathbb{C}^m,A_{\omega_{b_l,k}})= \bar{\chi}(X_{\alpha,k,a_l},A_{\omega^{a_l,\alpha,k}})= \bar{\chi}(X_{\gamma,k,a_l},A_{\omega^{a_l,\gamma,k}}) =0,
\end{eqnarray*}
where $X_{\alpha,k,a_l}$ is $\mathbb{C}^m$, and $X_{\gamma,k,a_l}$ is of the form $\mathbb{C}^m/\Gamma$ for some nontrivial finite group $\Gamma \subset U(m)$. Therefore the corresponding quantities $\bar{\chi}(X,A)$ of certain kinds of bubbles vanish in Theorem \ref{highdimthm}.
\end{rem}

Next we state another application of the corresponding compactness theory in higher dimensions which may be of some interest.
For a holomorphic line bundle $L$ on a compact K\"{a}hler $m$-fold $X$, one defines the
volume of $L$ as
\begin{eqnarray*}
v(L)=\limsup_{k\rightarrow \infty}\frac{m!}{k^m}dim H^0(X, L^k),
\end{eqnarray*}
see e.g. \cite{Bouck2,Bouck4} for related results of this aspect.
When $L$ is big or merely nef (numerically
effective), we have (see e.g. \cite{Bouck2})
\begin{eqnarray*}
v(L)=\int_{X} (c_1(L))^m.
\end{eqnarray*}
\begin{thm}\label{volquant}
Let $(M_j, J_j, g_j)$ be a sequence of K\"{a}hler-Einstein $m$ $(>1)$ (complex) dimensional manifolds with uniformly bounded Einstein constants and diameters, uniform $L^m$ Riemannian curvatures bounds, and positive uniform volume lower bounds. Assume that $L_j$ is a holomorphic line bundle over $M_j$, and $A_j$ is a sequence of Hermitian-Yang-Mills connection in $L_j$ with uniform $L^m$ Yang-Mills curvatures bounds. If all $L_j$ are big (or nef), then we have
\begin{eqnarray*}
\lim_{j\rightarrow \infty}v(L_j)&=&\int_{M_\infty} \left(\frac{i}{2\pi}F_{A_\infty}\right)^m +\sum_{l=1}^{u}\sum_{k=1}^{N_{a_l,\beta}}\bar{v}(X_{\beta,k,a_l},A_{\omega^{a_l,\beta,k}}),
\end{eqnarray*}
where $\bar{v}(X,A)$ is the integral of the corresponding $2m$-form $(\frac{i}{2\pi}F_{A})^m$ for the connection $A$ over the underlying space $X$, $(\frac{i}{2\pi}F_{A})^m$ is the wedge product power of $\frac{i}{2\pi}F_A$. Here $M_\infty$ is a K\"{a}hler-Einstein $m$-orbifold, $X_{\beta,k,a_l}$ is a Ricci flat K\"{a}hler ALE space similar to those in Theorem \ref{degeymenergyintr}.
\end{thm}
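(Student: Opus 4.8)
The plan is to reduce Theorem \ref{volquant} to the already-established Euler number quantization (Theorem \ref{appli55}/Theorem \ref{highdimthm}) and the convergence theory of K\"ahler-Einstein manifolds (Theorem \ref{Donldsonsuncoverge}), exploiting the hypothesis that $L_j$ is big or nef. First I would recall that for a big or nef line bundle $L$ on a compact K\"ahler $m$-fold $X$ one has $v(L)=\int_X(c_1(L))^m$, and that for a Hermitian-Yang-Mills connection $A$ in $L$ the Chern-Weil representative gives $c_1(L)=[\frac{i}{2\pi}F_A]$, so $v(L_j)=\int_{M_j}(\frac{i}{2\pi}F_{A_j})^m$. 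The key analytic input is that the integrand $(\frac{i}{2\pi}F_{A_j})^m$ is controlled pointwise by $|F_{A_j}|^m$, hence by the uniform $L^m$ Yang-Mills curvature bound; together with the uniform $L^m$ Riemannian curvature bound and the non-collapsing assumption, this places us exactly in the setting where the bubble-tree/energy-quantization machinery of Section \ref{treeidentity} (in its higher dimensional $L^{n/2}$-form, Remark \ref{rmkdegeymenergyhd}, Theorem \ref{degeymenergyhd}) applies to the $2m$-form $(\frac{i}{2\pi}F_{A})^m$ just as it applied to $[ch(E)td(TX)]_m$ in Theorem \ref{appli33} and Theorem \ref{highdimthm}.

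Second, I would run the quantization identity for this particular characteristic form: under the convergence $(M_j,J_j,g_j,L_j,A_j)\to(M_\infty,J_\infty,g_\infty,L_\infty,A_\infty)$ modular bubble connections, one gets
\begin{eqnarray*}
\lim_{j\to\infty}\int_{M_j}\Big(\tfrac{i}{2\pi}F_{A_j}\Big)^m=\int_{M_\infty}\Big(\tfrac{i}{2\pi}F_{A_\infty}\Big)^m+\sum_{l=1}^{v}\sum_{k=1}^{N_{b_l}}\bar v(\mathbb{C}^m,A_{\omega_{b_l,k}})+\sum_{l=1}^{u}\sum_{\eta=\alpha,\beta,\gamma}\sum_{k=1}^{N_{a_l,\eta}}\bar v(X_{\eta,k,a_l},A_{\omega^{a_l,\eta,k}}).
\end{eqnarray*}
The next step is to kill the $\alpha$- and $\gamma$-type bubble terms. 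On $\mathbb{C}^m$ and on $\mathbb{C}^m/\Gamma$ with the flat metric, a finite-energy Hermitian-Yang-Mills connection $A$ in a holomorphic line bundle extends, via the removable singularity theorem (Theorem \ref{remsingu}) and a lift if necessary, to a smooth connection $\bar A$ in a complex line bundle over $S^{2m}$; since $H^2(S^{2m},\mathbb{Z})=0$ for $m>1$ all such bundles are trivial, so $c_1(L,\bar A)=0$ in cohomology, whence $\int(\frac{i}{2\pi}F_A)^m=0$. Thus $\bar v(\mathbb{C}^m,A)=\bar v(\mathbb{C}^m/\Gamma,A)=0$, exactly parallel to the vanishing arguments preceding Theorem \ref{appli55} and in the remark after Theorem \ref{highdimthm}. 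Only the $\beta$-type bubbles, over Ricci-flat K\"ahler ALE spaces, survive, giving precisely the stated right-hand side with $\bar v(X_{\beta,k,a_l},A_{\omega^{a_l,\beta,k}})=\int_{X_{\beta,k,a_l}}(\frac{i}{2\pi}F_{A_{\omega^{a_l,\beta,k}}})^m$.

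The main obstacle I anticipate is justifying the no-energy-loss statement for the specific $2m$-form $(\frac{i}{2\pi}F_A)^m$ on the degenerating neck regions in the higher dimensional setting: one must know that the bubble-neck decomposition of Section \ref{treeidentity} (adapted via the $L^{n/2}$ hypotheses) controls not merely the energy $\int|F_A|^{n/2}$ but also this top-degree wedge power, i.e. that the characteristic form integrals converge additively with no residual contribution concentrated on necks or on the orbifold singularities beyond the accounted bubbles. For a line bundle this is cleaner than the general vector-bundle case because $(\frac{i}{2\pi}F_A)^m$ is a closed form representing $c_1(L)^m$ and one can argue cohomologically on each piece; the delicate point is the compatibility of the various Uhlenbeck limits and the matching of cohomology classes across the neck regions, which should follow from the same annulus estimates (Lemma \ref{keyymequ} and its consequences) that drive Theorem \ref{degeymenergyintr}. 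A secondary technical point is confirming that bigness/nefness of $L_j$ is preserved (or that $v$ is still computed by the top intersection of $c_1$) in the limit and that the identity $v(L)=\int(c_1(L))^m$ holds on the possibly singular normal projective variety $M_\infty$; this is where the convergence-as-varieties statement of Theorem \ref{Donldsonsuncoverge} and standard facts about big/nef line bundles on normal projective varieties are invoked.
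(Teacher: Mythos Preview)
Your core argument is correct and matches the paper's own proof: write $v(L_j)=\int_{M_j}(\tfrac{i}{2\pi}F_{A_j})^m$ via big/nef, invoke the $L^{n/2}$ energy quantization (Theorem \ref{degeymenergyintr} for $m=2$, Theorem \ref{degeymenergyhd} for $m>2$) using the pointwise domination of $(\tfrac{i}{2\pi}F_A)^m$ by $|F_A|^m$, and then kill the $\alpha$- and $\gamma$-type bubble contributions via $H^2(S^{2m},\mathbb{Z})=0$.

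That said, you load on machinery the paper does not use and that the statement does not require. The reduction is to Theorem \ref{degeymenergyintr}/\ref{degeymenergyhd} directly, not to the Euler-number results Theorem \ref{appli55}/\ref{highdimthm} (those involve the Todd class and are irrelevant to $(c_1)^m$); and Theorem \ref{Donldsonsuncoverge} plays no role, since the right-hand side is $\int_{M_\infty}(\tfrac{i}{2\pi}F_{A_\infty})^m$ rather than $v(L_\infty)$, so there is nothing to check about bigness/nefness on the limit variety. Likewise your ``main obstacle'' evaporates: once the $L^{n/2}$ neck decay for $|F_{A_j}|$ is known, the no-loss statement for the closed form $(\tfrac{i}{2\pi}F_A)^m$ follows from the pointwise bound alone, with no separate cohomological matching across necks needed.
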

Theorem \ref{volquant} is a direct consequence of the quantization of $L^m$ Yang-Mills curvature energy in
Theorem \ref{degeymenergyintr} for $m=2$ and in Theorem \ref{degeymenergyhd} for the higher dimensional case $m>2$. Because $H^2(S^{2m},\mathbb{Z})=0$ for $m>1$, all complex line bundles on $S^{2m}$ are trivial (see e.g. \cite{holobundle96book}), it is easy to deduce that
\begin{eqnarray*}
\bar{v}(\mathbb{C}^m,A_{\omega_{b_l,k}})= \bar{v}(X_{\alpha,k,a_l},A_{\omega^{a_l,\alpha,k}})= \bar{v}(X_{\gamma,k,a_l},A_{\omega^{a_l,\gamma,k}}) =0,
\end{eqnarray*}
where $X_{\alpha,k,a_l}$ is $\mathbb{C}^m$, and $X_{\gamma,k,a_l}$ is of the form $\mathbb{C}^m/\Gamma$ for some nontrivial finite group $\Gamma \subset U(m)$,
see the arguments in the above. This is the reason why the corresponding quantities $\bar{v}(X,A)$ of certain kinds of bubbles vanish in Theorem \ref{volquant}.

\subsection{Proof of Theorem \ref{mainaplintr11}}\label{mainappliprf}
It is a direct consequence of the following more general result.
\begin{thm}\label{mainappl444}
Let $(M_j, J_j,g_j)$ be a sequence of K\"{a}hler-Einstein surface as in Theorem \ref{intromainapl33}, and let $A_{j}^{\otimes p}$ be the connection in $K_{M_j}^{-p}$ ($p\geq1$) induced from the metric $g_j$ when the scalar curvature is positive, and let $A_{j}^{\otimes p}$ be the connection in $K_{M_j}^{p}$ induced from the metric $g_j$ when the scalar curvature is negative. If $(M_j, J_j,g_j)$ converges (up to a subsequence) to a orbifold $(M_\infty,J_\infty, g_\infty)$ with singularities $a_1,\cdots,a_u$  (modular bubbles), then
\begin{eqnarray*}
\sum_{l=1}^{u}\mu_{M_{\infty},a_l}(K_{M_{\infty}}^{-p}) &=& \sum_{l=1}^{u}\sum_{k=1}^{N_{a_l,\beta}}\bar{\chi}(X_{\beta,k,a_l},A_{\omega^{a_l,\beta,k}}^{\otimes p})\\
&=&\sum_{l=1}^{u}\sum_{k=1}^{N_{a_l,\beta}}\int_{X_{\beta,k,a_l}}\frac{1}{12}\cdot\frac{1}{8\pi^2}|Rm|^2dvol.\nonumber
\end{eqnarray*}
for $p\geq 1$ when the Ricci curvature is positive,
where $A_{\omega^{a_l,\beta,k}}^{\otimes p}$ are the corresponding bubble connection in $K_{X_{\beta,k,a_l}}^{-p}$ over
the ALE bubble space $X_{\beta,k,a_l}$, $\mu_{M_\infty,a_l}(L_\infty)$ is a rational number related to the local fundamental group.

On the other hand, there holds
\begin{eqnarray*}
\sum_{l=1}^{u}\mu_{M_{\infty},a_l}(K_{M_{\infty}}^{p}) &=& \sum_{l=1}^{u}\sum_{k=1}^{N_{a_l,\beta}}\bar{\chi}(X_{\beta,k,a_l},A_{\omega^{a_l,\beta,k}}^{\otimes p})\\
&=&\sum_{l=1}^{u}\sum_{k=1}^{N_{a_l,\beta}}\int_{X_{\beta,k,a_l}}\frac{1}{12}\cdot\frac{1}{8\pi^2}|Rm|^2dvol.\nonumber
\end{eqnarray*}
for $p\geq 2$ when the Ricci curvature is negative, where $A_{\omega^{a_l,\beta,k}}^{\otimes p}$ are the corresponding bubble connection in $K_{X_{\beta,k,a_l}}^{p}$ over
the ALE bubble space $X_{\beta,k,a_l}$.
\end{thm}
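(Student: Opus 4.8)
The plan is to extract this identity from Theorem~\ref{appli55} by feeding it the specific line bundles $L_j = K_{M_j}^{-p}$ (positive case) or $L_j = K_{M_j}^{p}$ (negative case), together with the invariance of $\dim H^0$ along the degeneration coming from Theorem~\ref{tianconvegthm}, Theorem~\ref{Donldsonsuncoverge} and the results of Tian and Donaldson--Sun quoted in the introduction. First I would observe that, since $L_j$ is a line bundle, it is automatically stable, so the Hermitian--Yang--Mills connection exists; moreover, for $L_j = K_{M_j}^{-p}$ on a K\"ahler--Einstein surface with $Ric(g_j)=g_j$ the HYM connection is precisely the one induced from the metric $g_j$, which is what gives the last equality via \eqref{importformula111}: on each Ricci-flat K\"ahler ALE bubble $X_{\beta,k,a_l}$ the induced connection on $K_X^{-p}$ has curvature proportional to the Ricci form, which vanishes, so $\int_X (F\wedge F) = 0$ and $\bar\chi(X,A_{\omega^{a_l,\beta,k}}^{\otimes p}) = \int_X \frac{1}{12}\cdot\frac{1}{8\pi^2}|Rm|^2\,dvol$. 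The $\alpha$- and $\gamma$-type bubbles and the $b_l$-bubbles contribute nothing, as already recorded after Theorem~\ref{appli44} (all complex line bundles on $S^4$ are trivial).

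Next I would nail down the left-hand side. Applying Theorem~\ref{appli55} to $L_j$, the statement reads
\begin{eqnarray*}
\lim_{j\to\infty}\chi(M_j;L_j) = \chi(M_\infty;L_\infty) - \sum_{l=1}^{u}\mu_{M_\infty,a_l}(L_\infty) + \sum_{l=1}^{u}\sum_{k=1}^{N_{a_l,\beta}}\bar\chi(X_{\beta,k,a_l},A_{\omega^{a_l,\beta,k}}^{\otimes p}).
\end{eqnarray*}
So it suffices to show $\lim_{j\to\infty}\chi(M_j;L_j) = \chi(M_\infty;L_\infty)$, i.e.\ that the Euler characteristic of these particular bundles is constant along the sequence and equals that of the limit. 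In the positive case $L_j = K_{M_j}^{-p}$ is ample (Del Pezzo), so Kodaira--Nakano vanishing (Theorem~\ref{vanishingthm00}, valid since $M_\infty$ has only rational singularities by Remark~\ref{reminportant}) kills $H^q$ for $q>0$, giving $\chi(M_j;K_{M_j}^{-p}) = \dim H^0(M_j;K_{M_j}^{-p})$ and likewise for $M_\infty$; then the cited invariance of $\dim H^0(X;K_X^{-p})$ under K\"ahler--Einstein degeneration (Tian~\cite{Tian1990}, \cite{tian13c0}, \cite{DonaldsonSunacta}) together with the fact that $L_\infty$ is the limit bundle $K_{M_\infty}^{-p}$ yields $\lim_j \chi(M_j;L_j) = \chi(M_\infty;K_{M_\infty}^{-p})$. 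In the negative case $L_j = K_{M_j}^{p}$ with $p\geq 2$: now $K_{M_j}$ is ample, so by Theorem~\ref{vanishingthm00} applied with $L = K_{M_j}^{p-1}$ (noting $K_{M_j}\otimes K_{M_j}^{p-1} = K_{M_j}^p$) we get $H^q(M_j;K_{M_j}^p) = 0$ for $q>0$, hence again $\chi = \dim H^0$, and one invokes the analogous invariance of $\dim H^0(X;K_X^p)$ under degeneration of negatively-curved K\"ahler--Einstein surfaces. The restriction $p\geq 2$ in the negative case is exactly what is needed to write $K^p$ as $K\otimes(\text{ample})$; this is why the statement excludes $p=1$ there.

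The main obstacle I anticipate is the identification of the limit connection and limit bundle: one must check that the sequence $(M_j,J_j,g_j,K_{M_j}^{-p},A_j^{\otimes p})$ really converges, in the sense of ``convergence modular bubble connections'' used in Theorem~\ref{appli55}, to $(M_\infty,J_\infty,g_\infty,K_{M_\infty}^{-p},A_\infty^{\otimes p})$ with $L_\infty = K_{M_\infty}^{-p}$ --- i.e.\ that the anticanonical bundles and their metric connections behave well under the Gromov--Hausdorff/orbifold degeneration, and that the HYM condition passes to the limit orbifold. This is where one leans on the embedding statement in Theorem~\ref{Donldsonsuncoverge} (convergence of the $X_i$ as projective varieties in $\mathbb{CP}^N$ via sections of $K^{-k}$), which ties the algebraic structure of $M_\infty$ to that of the $M_j$ and guarantees $K_{M_\infty}^{-p}$ is the natural limit object. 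A secondary, more bookkeeping-level point is to verify that the bubble connections arising from this specific choice are indeed the metric-induced connections on $K_{X_{\beta,k,a_l}}^{-p}$ (so that the Ricci-flatness argument applies); this follows because the blow-up rescalings of the Levi-Civita connection converge to the Levi-Civita connection of the bubble, hence the induced connections on powers of the canonical bundle converge accordingly. Once these convergence facts are in hand, the three displayed equalities are immediate from Theorem~\ref{appli55}, \eqref{importformula111}, and the vanishing of the non-$\beta$ contributions.
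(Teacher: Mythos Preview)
Your proposal is correct and follows essentially the same route as the paper. The only cosmetic difference is that the paper cites the already-packaged Theorem~\ref{intromainapl33} (specifically equation~\eqref{h0111}), which itself was proved by combining Theorem~\ref{appli55} with the vanishing Theorem~\ref{vanishingthm00}; you unpack this one level and apply Theorem~\ref{appli55} and the vanishing theorem directly, but the logical content --- reduce $\chi$ to $\dim H^0$ via vanishing, invoke the Tian/Donaldson--Sun invariance of $\dim H^0(X;K_X^{\pm p})$, and compute $\bar\chi$ on the $\beta$-bubbles via \eqref{importformula111} using Ricci-flatness --- is identical.
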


\begin{proof}
Note that $M_j$ in the theorem is a projective surface \cite[Chapter 5]{Huybrechts05book}, and $K_{M_\infty}^{-p}$ is ample for every $p\geq 1$ when the scalar curvature is positive, $K_{M_\infty}^{p}$ is ample for every $p\geq 1$ when the scalar curvature is negative. And moreover $A_{j}^{\otimes p}$ is a Hermitian-Yang-Mills connection.

Firstly, we prove the theorem in the positive Ricci curvature case. By Theorem \ref{tianconvegthm} and Theorem \ref{Donldsonsuncoverge}, we know that $K_{M_\infty}^{-p}$ is ample for every $p\geq 1$.
So we have
\begin{eqnarray*}
\lim_{j\rightarrow \infty}dim H^{0}(M_j;K_{M_j}^{-p})= dim H^{0}(M_\infty;K_{M_\infty}^{-p}) &-& \sum_{l=1}^{u}\mu_{M_{\infty},a_l}(K_{M_\infty}^{-p}) \\
&+& \sum_{l=1}^{u}\sum_{k=1}^{N_{a_l,\beta}}\bar{\chi}(X_{\beta,k,a_l},A_{\omega^{a_l,\beta,k}}^{\otimes p}),
\end{eqnarray*}
from (\ref{h0111}) in  Theorem \ref{intromainapl33} by noticing that $K_{M_\infty}\otimes K_{M_\infty}^{-(1+p)}=K_{M_\infty}^{-p}.$
By Lemma 5.2 of \cite{Tian1990} (see also Proposition 2.1 in \cite{tian13c0} and pages 85-86 of \cite{DonaldsonSunacta}), we have that
\begin{eqnarray*}
lim_{j\rightarrow \infty} \ dimH^0(M_j; K_{M_j}^{-p})=dimH^0(M_\infty; K_{M_\infty}^{-p})
\end{eqnarray*}
for any integer $ p > 0$. Therefore, the first identity in this case follows.

By the fact that the Ricci curvature of the ALE space $(X_{\beta,k,a_l},h_{\beta,k,a_l})$ in the above is zero, so the corresponding curvature $F_{\omega^{a_l,\beta,k}}^{\otimes p}$ of $A_{\omega^{a_l,\beta,k}}^{\otimes p}$ is zero, and hence
we have by (\ref{importformula111}) that
\begin{eqnarray*}\label{importformula11}
&&\bar{\chi}(X_{\beta,k,a_l},A_{\omega^{a_l,\beta,k}}^{\otimes p})\\
&=&\int_{X_{\beta,k,a_l}}\frac{1}{12}c_2(X_{\beta,k,a_l},h_{\beta,k,a_l})
+\frac{1}{2}\left(c_1(K_{X_{\beta,k,a_l}}^{-p},A_{\omega^{a_l,\beta,k}}^{\otimes p})\right)^2\\
&=&\int_{X_{\beta,k,a_l}}\frac{1}{12}\cdot \frac{1}{8\pi^2}|Rm|^2dvol
-\frac{1}{8\pi^2}\int_{X_{\beta,k,a_l}} \left(F_{\omega^{a_l,\beta,k}}^{\otimes p}\wedge F_{\omega^{a_l,\beta,k}}^{\otimes p}\right)\\
&=&\int_{X_{\beta,k,a_l}}\frac{1}{12}\cdot\frac{1}{8\pi^2}|Rm|^2dvol.
\end{eqnarray*}
So the second identity in this case holds. In the above, we have used the fact that for any connection $A$ in a holomorphic line bundle $L$ over $X$, it holds
\begin{eqnarray*}
\frac{1}{2}(c_1(L,A))^2=-\frac{1}{8\pi^2}F_A\wedge F_A.
\end{eqnarray*}

Secondly, in the negative Ricci curvature case, we can use results in \cite[pages 85-86]{DonaldsonSunacta} to show that
\begin{eqnarray*}
lim_{j\rightarrow \infty} \ dimH^0(M_j; K_{M_j}^{p})=dimH^0(M_\infty; K_{M_\infty}^{p})
\end{eqnarray*}
for $p\geq 1$. Since $K_{M_\infty}^{p}$ with $p\geq 1$ is ample, we can get
the first identity in this case from (\ref{h0111}) in Theorem \ref{intromainapl33} for $K_{M_\infty}\otimes K_{M_\infty}^{p} =K_{M_\infty}^{(1+p)}.$ Note that $p+1\geq 2$ here. The second identity in this case follows from applying similar argument as in the positive scalar curvature case.
\end{proof}

Finally, Theorem \ref{mainaplintr11} follows as a corollary.

\begin{proof}[\textbf{Proof of Theorem \ref{mainaplintr11}}]
For the positive scalar curvature case, the first identity has been proved in Theorem \ref{mainappl444}. The second identity follows from the classical observation (\ref{euler22}) and the Riemannian curvature energy identity in Theorem \ref{mainconvgethm}. The case of negative scalar curvature can be proved in a similar way.
\end{proof}

\vskip5pt

\subsection{Application to some aspects of the moduli spaces of Del Pezzo surfaces}\label{moduliappli}
In \cite{Tian1990}, Tian proved the existence of K\"{a}hler-Einstein on smooth Del Pezzo surfaces, and initiated the studies of the corresponding moduli space on the basis of some important classical works e.g. Tian-Yau \cite{tianyau87}. Two decades later, in \cite{sunjdg16} the authors proved that the Gromov–Hausdorff compactification of the
moduli space of K\"{a}hler-Einstein Del Pezzo surfaces (which have positive scalar curvature) in each degree
agrees with certain algebro-geometric compactification. In particular, by \cite{Tian1990,sunjdg16} (and some other related works, see references in \cite{sunjdg16}) it was shown that the Gromov–Hausdorff limit of a
sequence of K\"{a}hler-Einstein Del Pezzo surfaces is a Kähler–Einstein log
Del Pezzo with either canonical (ADE) singularities or cyclic quotient
singularities of certain types. For more related results about moduli spaces of $\mathbb{Q}$-Gorenstein smoothable Fano varieties, we refer to e.g. \cite{SSY-exis,SS-moduli,Od-moduli,LWC-moduli}.

In the following, we will recall some related results (see e.g. the introduction and section 2 of \cite{sunjdg16}). Since it is well know that for $d \geq 5 $ the moduli space is
just a single point, we can assume the degree $d \in \{1,2,3,4\}$. When $d=3,4$, the Gromov–Hausdorff limit $M_{\infty} $ have only canonical singularities of type $A_1$ or $A_2$. When $d = 2$, $M_{\infty} $ can have only $A _1 , A _2 , A_ 3 , A _4 $, and $\frac{1}{4} (1,1) $ type cyclic quotient singularities.
 When $d = 1$, $M_{\infty} $ can have only $\frac{1}{4} (1,1) $, $\frac{1}{8} (1,3) $, $\frac{1}{9} (1,2) $
 singularities
besides $A_i $ $(i \leq 8)$ and $D_ 4$ singularities (see Lemma 5.16 of \cite{sunjdg16}). An important new result in \cite{sunjdg16} is that the class of log Del Pezzo surfaces with canonical
singularities is not sufficient to construct a K\"{a}hler-Einstein moduli variety. In particular, they found some $\mathbb{Q}$-Gorenstein smoothable Kähler–Einstein log
Del Pezzo surfaces with non-canonical singularities. In other words, certain types of cyclic quotient singularities will appear definitely in some Gromov–Hausdorff limit $M_{\infty} $ when $d=1,2$.

To get further applications on the basis of Theorem \ref{mainaplintr11}, we need to compute the quantities $\mu$ which are rational numbers related to the structure of the line bundles at the singularities. It is a difficult task. But fortunately they was calculated in \cite{ChTs-BMY,LimRota22,LimRota23} for canonical singularities. In fact, for a complex 2-orbifold $X$ the line bundle $K_X^{-1}$ is locally equivalent to $\mathcal{O}_X$ at the canonical singularities (the local fundamental groups $\Gamma$ are in $SU(2)$), we can apply the results in \cite{ChTs-BMY} directly. We transcribe some conclusions that will be used here. For singularities of type $A_k$, we have $$12\mu_{X,p}(K_X^{-1})=(k+1)-\frac{1}{k+1}.$$
For singularities of type $D_4,$ we have 
$$12\mu_{X,p}(K_X^{-1})= \frac{39}{8}.$$

 And we are able to compute the quantities $\mu$ by {\bf Dedekind sums} (see e.g. \cite{ReidYPS, Icecream}) if the singularities are of type $\frac{1}{r} (b_1,b_2) $, where $r,b_1,b_2$ are integers. The $i$-th Dedekind sum $\sigma_i$ defined by

\begin{eqnarray*}
\sigma_{i}\left(\frac{1}{r}(b_{1}, \ldots, b_{m})\right) = \frac{1}{r} \sum_{\substack{\varepsilon \in \mu_{r} \\ \varepsilon^{b_{j}} \neq 1, \ \forall j=1,\ldots,m}} \frac{\varepsilon^{i}}{(1 - \varepsilon^{b_{1}}) \cdots (1 - \varepsilon^{b_{m}})},
\end{eqnarray*}
where $\varepsilon$ runs over the $r$-th roots of unity for which the denominator is
nonzero. By the arguments in \cite{ReidYPS} (page 407) and \cite{Icecream} (Theorem 3.3)
\begin{eqnarray*}
\mu_{X,p}(L)=\sigma_{i}\left(\frac{1}{r}(b_{1}, b_{2})\right)
\end{eqnarray*}
 if the line bundle $L$ is locally of type $ _i\left(\frac{1}{r}(b_{1}, \ldots, b_{m})\right)$ at the singularity $p$ (i.e. it is locally equivalent to the eigensheaf $\mathcal{L}_i$ ). We remark that it looks a little different from the formula in \cite{ReidYPS,Icecream} where the correction term is
\begin{eqnarray*}
c_p(D)=\sigma_{i}\left(\frac{1}{r}(b_{1}, b_{2})\right)-\sigma_{0}\left(\frac{1}{r}(b_{1}, b_{2})\right)
\end{eqnarray*} for the reason that the authors had already put the correction term $\sigma_{0}\left(\frac{1}{r}(b_{1},  b_{2})\right)$ corresponding to the eigensheaf $\mathcal{L}_0$ into $\chi(\mathcal{O}_X)$.
More precisely, the difference in the above comes from the different formulas
 \begin{eqnarray*}
\chi(X;L)=RR(D)+\sum_{p} c_p(D)
\end{eqnarray*}
and
 \begin{eqnarray*}
\chi(X;L)=\chi_{orb}(X;L)+\sum_{p} \mu_{X,p}(L)
\end{eqnarray*}
 used by different researchers, where $L=\mathcal{O}_X(D)$. A crucial term of $RR(D)$ is $\chi(\mathcal{O}_X)$, see e.g. Theorem 3.2 of \cite{Icecream}.
For more details, we refer to Chapter 3 of \cite{ReidYPS} and \cite{Icecream}.

 And again arguments in \cite{ReidYPS}(page 409) show that $\mu_{M_{0},p}(K_{M_{0}}^{-1})$ is of type $ _k\left(\frac{1}{r}(b_{1}, \ldots, b_{m})\right)$ if $p$ is a type $\frac{1}{r}(b_{1}, \ldots, b_{m})$ singularity, where $M_0$ is a $m$-dimensional complex orbifold, $-k=b_1+\cdots,b_m$. Then by the obvious properties of Dedekind sums $\sigma_i=\sigma_{i+r}$, we have that
 \begin{eqnarray*}
\mu_{M_{0},p}(K_{M_{0}}^{-1})=\sigma_{r+k}\left(\frac{1}{r}(b_{1}, \ldots, b_{m})\right),
\end{eqnarray*}
where $k=-(b_1+\cdots,b_m)$ is given as above.

By direct computations (with the use of the software Wolfram Mathematica), we get that
 \begin{eqnarray*}
 \sigma_{2}\left(\frac{1}{4}(1,  1)\right)&=&\sigma_{0}\left(\frac{1}{4}(1,  1)\right)=\frac{1}{16},\\
 \sigma_{4}\left(\frac{1}{8}(1,  3)\right)&=&\sigma_{0}\left(\frac{1}{8}(1,  3)\right)=\frac{5}{32},\\
 \sigma_{6}\left(\frac{1}{9}(1,  2)\right)&=&\sigma_{0}\left(\frac{1}{9}(1,  2)\right)=\frac{2}{27}.
 \end{eqnarray*}
Therefore
\begin{eqnarray*}
12\mu_{X,p}(K_{X}^{-1})=\frac{3}{4},\,\frac{15}{8}\, \text{or}\,\frac{8}{9}
\end{eqnarray*}
respectively,
if $p$ is a singularity of type $\frac{1}{4} (1,1) $, $\frac{1}{8} (1,3) $, or $\frac{1}{9} (1,2) $. Before the proof of Theorem \ref{mainaplintr22}, we recall the quantities $\mu$ for some canonical singularities.
\begin{eqnarray*}
12\mu_{X,p}(K_{X}^{-1})=\frac{3}{2},\,\frac{8}{3},\,\frac{15}{4},\,\frac{24}{5},\,
\cdots,\,9-\frac{1}{9},\, \text{or}\,\frac{39}{8}
\end{eqnarray*}
respectively,
if $p$ is a singularity of type $A_1,\cdots,A_8$ or $D_4$.

\begin{proof}[\textbf{Proof of Theorem \ref{mainaplintr22}}]
Notice that for the degree $d$ case, $\chi(M_j)=12-d.$
So from Theorem \ref{mainaplintr11} we have
\begin{eqnarray}\label{lericompa}
12\sum_{l=1}^{u}\mu_{M_{\infty},a_l}(K_{M_{\infty}}^{-1})<12-d,
\end{eqnarray}
since the sequence $M_j$ is non-collapsing and $M_\infty$ is non-flat.

First we prove (a) of Theorem \ref{mainaplintr22}. In this case, $d=3$, so
\begin{eqnarray*}\label{lericompa}
12\sum_{l=1}^{u}\mu_{M_{\infty},a_l}(K_{M_{\infty}}^{-1})<9,
\end{eqnarray*}
and then by using the equality $12\mu_{X,p}(K_{X}^{-1})=\frac{3}{2}$ for $A_1$ singularities, it is easy to see that the number of $A_1$ singularities is $\leq 5$.

For (b) and (c), we only prove that the number of $A_4$ singularities is at most 1. In this case $\chi(M_j)=10.$ If we assume that $M_\infty$ has two $A_4$ singularities, then we have that
\begin{eqnarray*}
10>12\sum_{l=1}^{u}\mu_{M_{\infty},a_l}(K_{M_{\infty}}^{-1})\geq 10-\frac{2}{5}=2\times \frac{24}{5}.
\end{eqnarray*}
It follows that $M_\infty$ has no other singularities, since $\frac{2}{5}$ is smaller than any $12\mu$ listed in the above. That is to say $M_\infty$ has only canonical singularities. But by the results in \cite{sunjdg16} (see Page 165), for $d=2$, $M_\infty$ can not have $A_4$ singularities if it has only canonical singularities. Thus we get a contradiction. Therefore $M_\infty$ has at most one $A_4$ singularity.

For (d), firstly, we prove that the number of singularities of type $D_4$ is not larger than 2. In this case $\chi(M_j)=11.$
Hence \begin{eqnarray*}
12\sum_{l=1}^{u}\mu_{M_{\infty},a_l}(K_{M_{\infty}}^{-1})<11,
\end{eqnarray*}
and then by noticing $12\mu_{X,p}(K_{X}^{-1})=\frac{39}{8}$ for $D_4$ singularities, it is easy to see that the number of $D_4$ singularities is $\leq 2$.

Then we prove that if $M_\infty$ has two $D_4$ singularities, then it has at most a $\frac{1}{4}(1,1)$ singularity or a $\frac{1}{9}(1,2)$ singularity besides these two singularities.
Let $a_1$ and $a_2$ be two $D_4$ singularities, then
\begin{eqnarray*}
12\sum_{l=3}^{u}\mu_{M_{\infty},a_l}(K_{M_{\infty}}^{-1})<11-2\times(5-\frac{1}{8})=\frac{5}{4}.
\end{eqnarray*}
Noticing that all the $12\mu$ listed in the above are bigger than $\frac{5}{4}$ besides $12\mu=\frac{3}{4}$ or $12\mu=\frac{8}{9}$ for $\frac{1}{4}(1,1)$ or $\frac{1}{9}(1,2)$ singularities, we can obtain the conclusion.

The others can be deduced by similar arguments in the above. The proof is finished.
\end{proof}

\begin{exam}\label{exam11}
Let the surfaces $X_s$ be the double cover of $\mathbb{P}(1,1,4)$ branched along the hyper-elliptic curve $z_3^2=f_8(z_1,z_2) $ which is denoted by $C$ (see Theorem 5.10 and Remark 5.14 of \cite{sunjdg16}), it has exactly two $\frac{1}{4}(1,1)$ singularities, and it is of degree $2$. Because $X_s$ is in the moduli space constructed in \cite{sunjdg16}, so it is the limit $M_\infty$ of a sequence of smooth K\"{a}hler-Einstein Del Pezzo surface $M_j$ of degree $2$.

Firstly, we compute the orbifold Euler number by methods of algebraic topology. It is easy to see that $\mathbb{P}(1,1,4)$ has same Euler number as $\mathbb{P}^2$, which is $3$. The curve $C$ has genus $3$ by Theorem 8.3 of \cite{AGGfib}(see also Corollary 3.5 of \cite{OW-top1} and Theorem 5.3.7 of \cite{weigpro}),
which states that the genus of a non-singular algebraic curve of degree $\bar{d}$ in the weighted
projective plane $\mathbb{P}(a_0, a_1, a_2)$ is given by
\begin{equation*}
g=\frac{1}{2}\left(\frac{\bar{d}^2}{a_0a_1a_2} -\bar{d}\sum_{i<j}\frac{gcd(a_i,a_j)}{a_ia_j} +\sum_{i}\frac{gcd(a_i,\bar{d})}{a_i}-1\right).
\end{equation*}
Note that in this case $\bar{d}=8$.
So the Euler number of
$C$ is $2-2g=2-6=-4$. Hence we have that $$\chi(X_s) =2\chi(P(1,1,4))-\chi(C)=2\times 3-(-4)=10. $$
Then the orbifold Euler number is
\begin{equation}\label{orbeuler1}
\chi_{orb}(X_s) =\chi(X_s)-\sum_{p }(1-\frac{1}{n_p})=10-2\times \frac{3}{4}=8+\frac{1}{2},
\end{equation}
where $n_p$ is the order of the local fundamental group at the singularity $p$. We remark that the order of the local fundamental group at a singularity of type $\frac{1}{r}(b_1,b_2)$ is $r$.

Secondly, we try to get the orbifold Euler number by applying Theorem \ref{mainaplintr11}. Notice that in this case the degree $d$ is $2$, so $\chi(M_j)=10.$
So from Theorem \ref{mainaplintr11} we have
\begin{eqnarray*}
\chi_{orb}(X_s)
=10-12\sum_{l=1}^{2}\mu_{M_{\infty},a_l}(K_{M_{\infty}}^{-1})=10-2\times \frac{3}{4}=8+\frac{1}{2},
\end{eqnarray*}
by noticing that $12\mu_{X,p}(K_{X}^{-1})=\frac{3}{4}$ for $\frac{1}{4}(1,1)$ singularities.
Therefore we conclude that we can obtain the same result by using the two methods for this example.
\end{exam}

\begin{exam}\label{exam22}
Let $X_1^{T}$
 be the quotient of $\mathbb{P}^2$ by $\mathbb{Z}/9\mathbb{Z}$ (see Example 3.10 and Example 5.8 of \cite{sunjdg16}), where the
generator $\xi$ of $\mathbb{Z}/9\mathbb{Z}$ acts by $$\zeta_9\cdot[z_1 : z_2 : z_3]=[z_1 : \zeta_9z_2 : \zeta_9^{-1}z_3]$$
and $\zeta_9$ is the primitive ninth root of unity. Then $X_1^{T}$ is a degree one
log Del Pezzo surface, with one $A_8$ singularity at $[1 : 0 : 0]$ and two
$\frac{1}{9}(1, 2)$ singularities at $[0 : 1 : 0] $ and $[0 : 0 : 1]$. And $X_1^{T}$ is the limit $M_\infty$ of a sequence of smooth K\"{a}hler-Einstein Del Pezzo surface $M_j$ of degree $1$.

Let $g_0$ be the standard Fubini-Study metric on $\mathbb{P}^2$, and $\Pi$ be the natural map from
$\mathbb{P}^2$ to $X_1^{T}$,
then we have
\begin{eqnarray*}
\chi(\mathbb{P}^2)&=&\frac{1}{8\pi^2}\int_{\mathbb{P}^2}|Rm(g_0)|^2dV_{g_0}\\
&=&9\times\frac{1}{8\pi^2}\int_{X_1^{T}}|Rm(h_0)|^2dV_{h_0}\\
&=&9\chi_{orb}(X_1^{T})=3
\end{eqnarray*}
where $h_0$ is smooth outside the singularities and $g_0=\Pi^*(h_0)$.
That is $\chi_{orb}(X_1^{T})=\frac{1}{3}.$
 Notice that the order of the local fundamental group at a singularity of type $\frac{1}{9}(1,2)$ and $A_8$ is $9$. It is easy to know that $\chi(X_1^{T})=3 $ by the same computations as in \eqref{orbeuler1}.

Next, notice that in this case the degree $d$ is 1, so $\chi(M_j)=11.$
So from Theorem \ref{mainaplintr11} we have
\begin{eqnarray*}
\chi_{orb}(X_1^{T})
=11-12\sum_{l=1}^{3}\mu_{M_{\infty},a_l}(K_{M_{\infty}}^{-1})=11-2\times \frac{8}{9}-(9-\frac{1}{9})=\frac{1}{3},
\end{eqnarray*}
by noticing that $12\mu_{X,p}(K_{X}^{-1})=\frac{8}{9},\,9-\frac{1}{9} $ for $\frac{1}{9}(1,2)$ or $A_8$ singularities.
So again we can obtain the same result by using two different methods for this example.

And one can apply the same arguments in the above to $X_2^{T}$ in Example 5.7. of \cite{sunjdg16} which is the quotient of $\mathbb{P}^1\times \mathbb{P}^1$ by $\mathbb{Z}/4\mathbb{Z}$, and is a degree
$2$ log Del Pezzo surface with two $A_3$ singularities and two $\frac{1}{4}$ singularities.
\end{exam}

Finally we will prove Proposition \ref{HRR-Milnor}.

\begin{proof}[\textbf{Proof of Proposition \ref{HRR-Milnor}}]
Let $M_\infty$ be the limit of a sequence of Del Pezzo surfaces $M_j$ of degree $d$ as in Theorem \ref{mainaplintr22}. By Theorem \ref{mainaplintr11}, we have
\begin{eqnarray*}
\chi_{orb}(M_{\infty})
+12\sum_{p\in Sing(M_{\infty})}\mu_{M_{\infty},p}(K_{M_{\infty}}^{-1})=12-d.
\end{eqnarray*}
And by Proposition 2.6 of \cite{HPdel}, we have
\begin{eqnarray*}
K_{M_{\infty}}^{\wedge 2}+\chi(M_{\infty})+\sum_{p\in Sing(M_{\infty})}\nu_p=12\chi(\mathcal{O}_{M_{\infty}}).
\end{eqnarray*}
Noticing that $K_{M_{\infty}}^{\wedge 2}=d$ (here $K_{M_{\infty}}^{\wedge 2}$ is the intersection number of $K_{M_{\infty}}$ with itself, usually it is just written as $K_{M_{\infty}}^{2}$ in other literatures), and $\chi(\mathcal{O}_{M_{\infty}})=1$ (see e.g. \cite{HPdel}),
we have
\begin{eqnarray*}
\chi(M_{\infty})+\sum_{p\in Sing(M_{\infty})}\nu_p=12-d.
\end{eqnarray*}
And then by the relation between $\chi(M_{\infty})$ and $\chi_{orb}(M_{\infty})$(see \eqref{euler11}), we get
\begin{eqnarray*}
\sum_{p\in Sing(M_{\infty})}(1-\frac{1}{n_p})+\sum_{p\in Sing(M_{\infty})}\nu_p=12\sum_{p\in Sing(M_{\infty})}\mu_{M_{\infty},p}(K_{M_{\infty}}^{-1}).
\end{eqnarray*}

And the second equality follows from the above equation and an identity in the proof of Lemma 5.16 of  \cite{sunjdg16} which states that
\begin{eqnarray*}
K_{M_{\infty}}^{\wedge 2}+\rho(M_{\infty})+\sum_{p\in Sing(M_{\infty})}\nu_p=12\chi(\mathcal{O}_{M_{\infty}})-2.
\end{eqnarray*}
\end{proof}

\

\section{Some preliminaries on non-collapsed Einstein 4-manifolds and Yang-Mills  fields}\label{Preliminaries111}

\vskip5pt

\subsection{Bubble tree convergence of non-collapsed Einstein 4-manifolds}
We shall firstly recall the classical compactness theory of non-collapsed Einstein 4-manifolds developed in
\cite{nakajima1988hausdorff,anderson1989ricci, BKN, Tian1990,  bando1990bubbling, Anderson1992,Anderson94icm, nakajima1994convergence,Jeff2006Curvature} etc.

Let $(M_j ,g_j)$  be a sequence of closed Einstein $4$-manifolds with uniformly bounded Einstein constants $|\mu_j|\leq \mu$ and satisfying (\ref{einstincondintr}). Then $(M_j ,g_j)$ have a priori $L^2$ Riemannian curvature estimates \eqref{cheegernaberjiang}.
By the study in the above mentioned works, we can suppose that $(M_j ,g_j)$ converges smoothly except at the set $S$ of singular points up to a subsequence. Then for every $x_a \in S$, there is a point $x_{a,j}\in M_j$ such that for a positive constant $r_\infty$ sufficiently small,
\begin{equation*}
\sup_{B(x_{a,j},r_\infty)} |Rm({g_j})|^2=|Rm({g_j})|^2(x_{a,j})\rightarrow \infty \quad \text{as} \quad j\rightarrow \infty
\end{equation*}
and
\begin{equation*}
\int_{B(x_a,r_\infty)} |Rm(g_\infty)|^2dV_\infty\leq \frac{\varepsilon}{2}
\end{equation*}
with a small positive number $\varepsilon<\bar{\varepsilon}$, where $\bar{\varepsilon}>0$ is determined by the small energy regularity theorem for Einstein metric (see e.g. \cite{anderson1989ricci,Tian1990,Jeff2006Curvature}). For sufficiently large $j$, we can find a positive number $r_j>0$ so that
\begin{equation*}
\int_{B(x_{a,j},r_\infty)\setminus B(x_{a,j},r_j)} |Rm(g_j)|^2dV_j =\varepsilon.
\end{equation*}
It is easy to see that
$r_j\rightarrow 0 $ as $j\rightarrow \infty.$

We remark that if moreover we assume that
\begin{equation*}
\int_{M_j} |Rm(g_j)|^{n/2}dV_j \leq R,
\end{equation*}
for some finite $R>0$, where $n$ is the real dimension of $M_j$, then we can extend the corresponding results in the following theorem to the higher dimensional situations. In particular, a similar Riemannian Curvature estimate for the neck regions and a quantization result for the $L^{n/2}$ Riemannian curvature energy hold.

\begin{thm}\label{mainconvgethm}(\cite{nakajima1988hausdorff,anderson1989ricci, BKN, Tian1990,  bando1990bubbling, Anderson1992,Anderson94icm, nakajima1994convergence,Jeff2006Curvature,Anderson94icm,cheeger2015regularity} etc.) Let $(M_j ,g_j)$ be a sequence of closed Einstein 4-manifolds as above.
 Then there exist a subsequence of $\{j\}$ (still denoted by $j$) and a compact Einstein orbifold $(M_\infty, g_\infty )$ with a finite set (possibly empty) of orbifold singular points $S=\{x_1,x_2,\cdots,x_s\}\subset M_\infty$  for which the following statements hold:

\begin{itemize}
\item[(1)] $(M_j ,g_j)$ converges to $(M_\infty, g_\infty )$ in the Gromov-Hausdorff distance. There exists an into diffeomorphism $F_j:M_\infty\rightarrow M_j$ for each $j$ such that $F^{*}_{j}g_j$ converges to
$g_\infty$ in the $C^\infty$-topology on $M_\infty \setminus S$.

\item[(2)] For every $x_a\in S$ ($a=1,2,\cdots,s$) and $j$, let $x_{a,j}\in M_j$ and $r_j$ be chosen in the above way, then
\begin{itemize}
\item[(2.a)] $B(x_{a,j} ,\delta)$ converges to $B(x_a, \delta) $ in the Gromov-Hausdorff distance for all $\delta>0$.

\item[(2.b)] Up to a subsequence, $((M_j,r_j^{-2}g_j),x_{a,j})$ converges to an \textbf{ALE bubble space} $((Y,h),y_\infty)$ in the pointed Gromov-Hausdorff distance, where $(Y,h)$ is a complete, Ricci flat, non-flat ALE 4-orbifold with a finite set (possibly empty) of orbifold singularities. The convergence is smooth outside the singular points.
\item[(2.c)] (\textbf{Riemannian Curvature estimate}) There exist positive constants $C_7>0$ and $\varepsilon_5>0$ such that for $ 4r_j\leq r< 4r \leq r_\infty$ it holds that
\begin{equation*}\label{neckcurvturestmprop}
r^2|Rm({g_j})|\leq C_7 \max \left\{(\frac{r_j}{r})^{\varepsilon_5},(\frac{r}{r_\infty})^{\varepsilon_5}  \right\}.
\end{equation*}

\end{itemize}

\item[(3)]In the case that there is only one ALE bubble manifold at each singular point, we have that
\begin{itemize}
\item[(3.a)] $B(x_{a,j} ,\delta)$ converges to $B(x_a, \delta) $ in the Gromov-Hausdorff distance for all $\delta>0$.

\item[(3.b)] $(( M_j , r_{j}^{-2}g_j),x_{a,j})$ converge to $(( M_a , h_a),x_{a,\infty})$ in the pointed Gromov-Hausdorff distance, where $( M_a , h_a)$ is a Ricci flat ALE $4$-manifold.

\item[(3.c)] There exists an into diffeomorphism $G_j : M_a \rightarrow M_j$ such that $G^{*}_{j}(r_{j}^{-2} g_j)$ converges to $h_a$ in the $C^\infty$-topology on $M_a$.

\end{itemize}
\item[(4)] If there are several bubble manifolds (or orbifolds) at some singular points, by repeating the same process as above, we can obtain a bubble tree $\mathbf{Tr_{ALE}}$ consisting of a finite number of Ricci flat ALE bubble spaces.

\item[(5)] (\textbf{Curvature energy identity}) \label{curenergyidentity}
Let $r_j,r_\infty,x_{a,j}$ be as before, then it holds
\begin{equation*}
\lim_{K_2\rightarrow \infty}\lim_{K_1\rightarrow \infty }\lim_{r_j\rightarrow 0}\int_{B(x_{a,j},K_2^{-1}r_\infty)\setminus B(x_{a,j},K_1r_j)}|Rm(g_j)|^2 dV_j=0.
\end{equation*}
Or equivalently, it holds that
\begin{equation*}
\lim_{j\rightarrow \infty}\int_{M_j}|Rm(g_j)|^2 dV_j= \int_{M_\infty}|Rm(g_\infty)|^2 dV_\infty +\sum_k \int_{X_k}|Rm(h_k)|^2 dV_{h_k},
\end{equation*}
where $\{(X_k,h_k)_{k=1}^N\}$ is the set of all ALE bubble spaces in the tree $\mathbf{Tr_{ALE}}$ above.

\end{itemize}

\end{thm}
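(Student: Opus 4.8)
Since Theorem \ref{mainconvgethm} is a compilation of the classical compactness theory for non-collapsed Einstein 4-manifolds, the plan is to assemble it from four ingredients: $\varepsilon$-regularity for Einstein metrics, finiteness of the curvature-concentration set, the orbifold structure of the Gromov--Hausdorff limit, and the blow-up analysis producing the bubble tree together with the neck curvature estimate and the energy identity. Because $Ric(g_j)=\mu_j g_j$ with $|\mu_j|\le\mu$ and $diam(M_j,g_j)\le D$, Gromov compactness yields a subsequence converging in the Gromov--Hausdorff sense to a compact metric space $M_\infty$, and the bound $vol(M_j,g_j)\ge V$ together with Bishop--Gromov gives a uniform noncollapsing estimate $vol(B(x,r))\ge c\,r^4$ for all $x$ and $r\le D$. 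The essential local input is the $\varepsilon$-regularity theorem for Einstein metrics: the second Bianchi identity turns the Einstein equation into an elliptic system $\Delta Rm(g_j)=Rm(g_j)\ast Rm(g_j)+c\,Rm(g_j)$, so on a ball with $\int_{B(x,r)}|Rm(g_j)|^2\le\bar\varepsilon$ one obtains, via Anderson's harmonic-radius estimate and Moser iteration, $\sup_{B(x,r/2)}|Rm(g_j)|\le C r^{-2}\bar\varepsilon^{1/2}$ together with all higher covariant derivative bounds. I would then let $S\subset M_\infty$ be the set of points admitting no such small-energy ball for all large $j$; the a priori bound \eqref{cheegernaberjiang} forces $\#S\le R(\mu,D,V)/\bar\varepsilon$ by a covering argument, and away from $S$ the uniform curvature and injectivity-radius bounds give $C^\infty$ limits in harmonic coordinates, producing the into-diffeomorphisms $F_j$ and the smooth Einstein limit $g_\infty$ on $M_\infty\setminus S$; this is part (1).

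\textbf{Orbifold structure at the singular points.} Fix $x_a\in S$. By noncollapsing and Bishop--Gromov, every tangent cone of $M_\infty$ at $x_a$ is a metric cone over a $3$-dimensional space form quotient; since the relevant rescaled limits are Ricci flat with Euclidean volume growth, Bishop volume rigidity forces such a cone to be flat, i.e. $\mathbb{R}^4/\Gamma_a$ with $\Gamma_a\subset SO(4)$ finite. Following Anderson and Bando--Kasue--Nakajima, the harmonic-coordinate estimates on the annuli $B(x_a,2s)\setminus B(x_a,s)$ yield a definite rate of curvature decay as $s\to0$, which one integrates to produce an orbifold uniformization chart in which $g_\infty$ extends as a $C^\infty$ Einstein metric. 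This identifies $(M_\infty,g_\infty)$ as a compact Einstein orbifold with singular set exactly $S$, and gives (2.a) and (3.a) by the corresponding statements for balls.

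\textbf{Blow-up, ALE bubbles, and the tree.} For $x_a\in S$ pick a concentration point $x_{a,j}$ and the scale $r_j\to0$ normalized by $\int_{B(x_{a,j},r_\infty)\setminus B(x_{a,j},r_j)}|Rm(g_j)|^2=\varepsilon$ with $\varepsilon<\bar\varepsilon$ as in the excerpt. By noncollapsing and $\varepsilon$-regularity, $((M_j,r_j^{-2}g_j),x_{a,j})$ subconverges in the pointed Gromov--Hausdorff sense, and smoothly away from finitely many points, to a complete length space $(Y,h)$ that is Ricci flat (the rescaled Einstein constants $r_j^2\mu_j\to0$), non-flat (a definite amount $\varepsilon$ of energy survives in a fixed annulus), and of Euclidean volume growth with tangent cone at infinity $\mathbb{R}^4/\Gamma_a$; by Bando--Kasue--Nakajima $(Y,h)$ is then ALE of order $4$, and when no intermediate-scale concentration occurs the convergence is smooth on all of $Y$, giving the diffeomorphisms $G_j$ of (3.c). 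If curvature also concentrated at an intermediate scale at some point of $Y$, one repeats the construction there; since every non-flat Ricci-flat ALE $4$-orbifold carries a universal amount of curvature energy $\int|Rm|^2\ge\varepsilon_0>0$ (a gap theorem, again from $\varepsilon$-regularity) and the total is bounded by $R(\mu,D,V)$, the process stops after finitely many steps, producing the bubble tree $\mathbf{Tr_{ALE}}$; this is (2.b), (3.b), (3.c) and (4).

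\textbf{Neck estimate and energy identity.} The remaining, and principal, point is the curvature estimate (2.c) on the neck annuli $4r_j\le s\le r_\infty/4$, which I would prove by a contradiction--compactness scheme: on a dyadic sub-annulus carrying little energy the rescaled metric $s^{-2}g_j$ is $\varepsilon$-regular and, being almost Ricci flat with almost maximal volume ratio, $C^\infty$-close to a flat cone $\mathbb{R}^4/\Gamma$; a three-annulus (logarithmic monotonicity of the $L^2$ curvature energy) lemma then upgrades this to the geometric decay $s^2|Rm(g_j)|\le C\max\{(r_j/s)^{\varepsilon_5},(s/r_\infty)^{\varepsilon_5}\}$, the two terms corresponding to leakage toward the bubble scale and toward the regular scale; alternatively one may invoke the quantitative-stratification and $L^2$-curvature estimates of \cite{cheeger2015regularity}. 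Granting (2.c), in real dimension $4$ one has $\int_{A(s,2s)}|Rm(g_j)|^2\le C\big(\sup_{A(s,2s)}s^2|Rm(g_j)|\big)^2 s^{-4}\,vol(A(s,2s))\le C\big(\sup s^2|Rm(g_j)|\big)^2$ by the volume upper and lower bounds, and summing the resulting geometric series over the dyadic scales in the neck gives
\[
\lim_{K_2\to\infty}\lim_{K_1\to\infty}\lim_{r_j\to0}\int_{B(x_{a,j},K_2^{-1}r_\infty)\setminus B(x_{a,j},K_1 r_j)}|Rm(g_j)|^2\,dV_j=0,
\]
which is (5); decomposing each $M_j$ into the part diffeomorphic to a fixed compact subset of $M_\infty\setminus S$, the rescaled bubble regions, and these necks, and passing to the limit term by term yields the equivalent curvature energy identity. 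The hard part throughout is exactly this neck estimate: one must rule out curvature hiding at intermediate scales on the degenerating annuli and convert a ``no-concentration'' hypothesis into a quantitative pointwise decay rate — everything else reduces to standard $\varepsilon$-regularity and extraction of subsequential limits.
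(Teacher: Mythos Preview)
The paper does not give its own proof of this theorem: it is stated in Section~\ref{Preliminaries111} purely as a recollection of the classical compactness theory, with attribution to \cite{nakajima1988hausdorff,anderson1989ricci,BKN,Tian1990,bando1990bubbling,Anderson1992,Anderson94icm,nakajima1994convergence,Jeff2006Curvature,cheeger2015regularity}, and is followed only by two short remarks. So there is no in-paper argument to compare your proposal against.

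That said, your outline is a faithful and correct sketch of how the cited references assemble the result: $\varepsilon$-regularity for Einstein metrics and harmonic-radius bounds (Anderson), finiteness of the concentration set via the $L^2$ curvature bound, the orbifold extension at the singular points (Anderson, Bando--Kasue--Nakajima, Tian), the ALE structure of blow-up limits (BKN), the bubble-tree termination via an energy gap, and the neck decay (2.c) leading to the energy identity (5) exactly as in \cite{bando1990bubbling}. Your identification of the neck estimate as the crux, and your derivation of (5) from (2.c) by dyadic summation, match precisely how the paper itself later uses (2.c) (see the proof of Theorem~\ref{degenockenergy}, where the same dyadic-annulus argument is run for the Yang--Mills curvature). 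Nothing is missing or wrong; your write-up simply supplies what the paper chose to quote rather than reprove.
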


\begin{rem}\label{remorbfsmoth}
By arguments in \cite{anderson1989ricci,BKN,Tian1990} (see e.g. Section 4 of \cite{Tian1990} or Theorem (5.1) of \cite{BKN}), the metric $g_\infty$ on $M_\infty$ extends smoothly across $x_a$ as an orbifold Einstein metric. That is to say, there is a (covering) map
$\Pi_a:B_1\setminus\{0\}\rightarrow B(x_a, \delta)\setminus\{x_a\} $ such that
$\Pi^{*}_{a}g_\infty $ extends to a smooth Einstein metric on $B_1\subset \mathbb{R}^4$.
\end{rem}

\begin{rem} \label{neckprop}(see e.g. \cite{bando1990bubbling})
If one takes $1<K_1<K_2$ sufficiently large, then the degenerate neck region $B(x_{a,j},K_2^{-1}r_\infty)\setminus B(x_{a,j},K_1r_j)$ is close to a potion of some flat cone $\mathbb{R}^4/\Gamma$ for large $j$.
\end{rem}

\subsection{Blow-up analysis of Yang-Mills fields}
We recall some basic knowledge and the compactness theory of Yang-Mills fields, see e.g. \cite{UhlenbeckRemovablesingu, UhlenbeckLp,Instanton4manifolds,geofourfolds,1982Gauge,YMcompbook}.

Let $(M,g)$ be a Riemannian 4-manifold and $E$ be some vector
bundle over $M$ with structure group $G\subset SO(r)$ whose Lie algebra is denoted by $\mathfrak{g}$.
A connection $A$ in $E$ induces covariant derivatives $\nabla_A$ on various vector
bundles related to $E$. We denote them by $\nabla_{g,A} $ when we want to emphasize the role of the metric $g$. $D_A=\wedge\circ \nabla_A$ gives the exterior differential on vector-valued forms.
The curvature $F_A$ of a connection $D_A$ is define as
$$D_A\circ D_A:\Omega^{*}(E)\rightarrow \Omega^{*+2}(E).$$
The Yang-Mills equation (\ref{ymequintro}) is Euler-Lagrange equation of the functional defined in
(\ref{ymfunct}).
By the Bianchi identities $D_AF_A=0$, we have that from (\ref{ymequintro}) that
\begin{equation*}
\triangle_{A}F_A=D_AD^{*}_AF_A+D^{*}_AD_AF_A=0
\end{equation*}
for any Yang-Mills connection $A$.

In the following, we collect some basic geometric analysis results developed for blow-up theory of Yang-Mills fields in dimension 4.

\begin{thm}\label{smallenergythm}($\epsilon$-regularity, see e.g. \cite{UhlenbeckRemovablesingu,UhlenbeckLp,geofourfolds,YMcompbook})
 Let $(B_1,g) $ be a ball in $\mathbb{R}^4$ equipped with a smooth metric $g$ which is close to the Euclidean metric in the sense that there is some sufficiently small positive number $\eta<1$ such that in local coordinates
  \begin{equation*}
|| g_{kl}-\delta_{kl}||_{C^4(B_1)}<\eta.
\end{equation*}
Let $G$ be a compact Lie group with Lie algebra $\mathfrak{g}$. There exists
$\epsilon_G>0$ and $C_G>0$ such that for any connection which is $d+A$ in local trivialization, where $A \in  W^{1,2}(T^1B_1 \otimes\mathfrak{g})$ satisfying
\begin{equation*}\label{smallcondit}
\int_{B_1}|F_A|^2dV_g\leq \epsilon_G,
\end{equation*}
there exists $h \in W^{2,2} (B_1 ,G)$ such that
\begin{eqnarray*}\label{smallestym}
& &\int_{B_1}|A^h|^2+|\nabla_g A^h|^2dV_g\leq C_G\int_{B_1}|F_A|^2dV_g,\nonumber\\
& &d^{*}A^h=0 \quad \text{in  } B_1,\\
& &\imath^{*}_{\partial B_1}(*A^h)=0\nonumber,
\end{eqnarray*}
where $A^h=h^{-1}dh+h^{-1}Ah$, and $\imath_{\partial B_1} $ is the canonical inclusion map of the boundary of $B_1$ into $\mathbb{R}^4$.
\end{thm}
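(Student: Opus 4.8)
The statement is Uhlenbeck's small-energy Coulomb gauge theorem, so the plan is to follow her continuity-method proof, carried out in Sobolev spaces. Since $\|g_{kl}-\delta_{kl}\|_{C^4(B_1)}<\eta$, all of the elliptic estimates, Hodge-theoretic facts and Sobolev embeddings below hold with constants depending only on $G$ once $\eta$ is small, so I would treat $g$ as a controlled perturbation of the flat metric and suppress it. The argument rests on two building blocks. The first is a \emph{local} gauge-fixing statement: there is $\delta_0=\delta_0(G)>0$ such that every $W^{1,2}$ connection $d+A$ on a ball $B$ with $\|A\|_{W^{1,2}(B)}\le\delta_0$ is gauge equivalent, via some $h$ close to the identity, to $a:=A^h$ with $d^{*}a=0$ and $\imath^{*}_{\partial B}(*a)=0$. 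One derives this by writing the Coulomb and boundary conditions for the generator $u$ of $h$ as $\Delta u=-d^{*}A+\mathcal{N}(u,A)$ in $B$ and $\partial_\nu u=-A_\nu+\mathcal{N}_\partial(u,A)$ on $\partial B$, with $\mathcal{N},\mathcal{N}_\partial$ at least bilinear; the linear Neumann problem is solvable (its compatibility condition is a direct consequence of the divergence theorem), with $\|u\|_{W^{2,2}}\le C\|A\|_{W^{1,2}}$ after normalizing $\int_B u=0$, and a Banach fixed-point argument closes the nonlinear problem once $\|A\|_{W^{1,2}}$ is small.

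The second building block is a \emph{self-improving} a priori estimate: if $a$ satisfies $d^{*}a=0$ in $B$ and $\imath^{*}_{\partial B}(*a)=0$, then $\|a\|_{W^{1,2}}\le C_0\|da\|_{L^2}$, because the Gaffney--Friedrichs inequality gives $\|a\|_{W^{1,2}}\le C(\|da\|_{L^2}+\|d^{*}a\|_{L^2}+\|a\|_{L^2})$ and the contractible ball carries no $L^2$ harmonic fields satisfying these boundary conditions, so the $\|a\|_{L^2}$ term is removed by a routine compactness argument. Using $da=F_a-a\wedge a$ and $W^{1,2}\hookrightarrow L^4$,
\[
\|a\|_{W^{1,2}}\le C_0\|F_a\|_{L^2}+C_1\|a\|_{W^{1,2}}^2 .
\]
Choosing a threshold $\theta_0$ with $2C_1\theta_0<1$ and then $\epsilon_G$ so small that $2C_0\sqrt{\epsilon_G}<\theta_0$, one obtains the crucial \emph{strict} improvement: whenever $a$ is as above with $\|a\|_{W^{1,2}}\le\theta_0$ and $\|F_a\|_{L^2}^2\le\epsilon_G$, in fact $\|a\|_{W^{1,2}}\le 2C_0\|F_a\|_{L^2}<\theta_0$. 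Since gauge transformations preserve $\|F\|_{L^2}$ and, in real dimension four, $\int|F|^2$ is conformally invariant, this holds on every sub-ball $B_\rho\subseteq B_1$ with the \emph{same} $\epsilon_G$.

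I would then run a continuity argument in the radius. With $\theta_0,\epsilon_G$ as above (and $\theta_0\le\delta_0$), set
\[
S=\{\rho\in(0,1]:\ A|_{B_\rho}\ \text{is gauge equivalent on}\ B_\rho\ \text{to some}\ a\ \text{with}\ d^{*}a=0,\ \imath^{*}_{\partial B_\rho}(*a)=0,\ \|a\|_{W^{1,2}(B_\rho)}\le\theta_0\},
\]
where the norm is the scale-invariant one $\|\nabla a\|_{L^2(B_\rho)}+\|a\|_{L^4(B_\rho)}$. Then $S\neq\emptyset$: as $\rho\to0$ this norm of $A$ over $B_\rho$ tends to $0$ by absolute continuity of the integral, so the local statement applies directly. $S$ is closed: the Coulomb representatives are bounded in $W^{1,2}$ and the gauge transformations are bounded in $W^{2,2}$ and pointwise (they map into the compact group $G$), so one passes the connection, the gauge transformation, the equations and the boundary conditions to the weak limit. $S$ is open: enlarging (resp.\ shrinking) $\rho$ slightly, one extends (resp.\ restricts) the gauge transformation to the new ball, where the collar carries only $o(1)$ of $W^{1,2}$-energy, then re-applies the local statement and the strict self-improvement to land back strictly inside $\theta_0$. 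Hence $S=(0,1]$, so $1\in S$, and the self-improvement yields $h$ on $B_1$ with $d^{*}A^h=0$, $\imath^{*}_{\partial B_1}(*A^h)=0$ and $\|A^h\|_{W^{1,2}(B_1)}\le 2C_0\|F_A\|_{L^2}$; squaring gives $\int_{B_1}(|A^h|^2+|\nabla_g A^h|^2)\,dV_g\le C_G\int_{B_1}|F_A|^2\,dV_g$. The merely-$W^{1,2}$ hypothesis is accommodated by working in Sobolev spaces throughout, or by smooth approximation in $W^{1,2}$ (so $F_{A_i}\to F_A$ in $L^2$ and the $\epsilon_G$-smallness is inherited for large $i$) together with extraction of a limiting gauge.

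The main difficulty is twofold. First, in real dimension four with $L^2$ curvature the relevant Sobolev exponent is critical ($W^{2,2}$ just fails to embed in $L^\infty$, and $W^{1,2}\hookrightarrow L^4$ is borderline), so the bilinear estimates in the local step must be arranged with care — this is precisely the delicate point in Uhlenbeck's treatment, where one may, if needed, first run the fixed-point argument in $W^{1,p}$ with $p>2$ and then pass to the limit. Second, and more essentially, making \emph{both} openness and closedness of $S$ succeed forces the use of the \emph{strict} self-improvement of the a priori estimate, which is in turn powered by the conformal invariance of $\int|F|^2$ in dimension four — this is what prevents the curvature threshold $\epsilon_G$ from deteriorating as the ball varies, and I expect this coupling of the continuity method with the self-improving estimate to be the heart of the proof.
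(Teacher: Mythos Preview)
Your proposal is a faithful sketch of Uhlenbeck's continuity-method proof, and the key ingredients --- the local Coulomb gauge via an implicit-function/fixed-point argument, the Gaffney--Friedrichs estimate yielding the self-improving a priori bound, and the open-closed argument in the radius exploiting conformal invariance of $\int|F|^2$ in dimension four --- are all correctly identified and assembled. However, the paper does not supply its own proof of this theorem: it is stated as a background result with references to \cite{UhlenbeckRemovablesingu,UhlenbeckLp,geofourfolds,YMcompbook}, so there is no paper proof to compare against. What you have written is essentially the argument one finds in those sources (particularly Uhlenbeck's original papers and Wehrheim's exposition), and it is correct.
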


\begin{thm}(Removable singularity, see e.g. \cite{UhlenbeckRemovablesingu,UhlenbeckLp})\label{remsingu}
Let $(B_1,g)$ be as in Theorem \ref{smallenergythm}, and let $ d+A$ be a Yang-Mills connection in a bundle $E$ with a compact structure group $G$ over $B_1 \setminus  \{0\} $,
if
\begin{equation*}
\int_{B_1}|F_A|^2dV_g\leq \infty,
\end{equation*}
then there is a gauge $h\in W^{2,2}_{loc} (B_{1} ,G) $ in which the bundle $E $ can be extended to a smooth  bundle $ \bar{E}$ over $B_{1}$ and the connection $A^h$ extends
to a smooth Yang-Mills connection $\bar{A}$ in $ \bar{E}$.
\end{thm}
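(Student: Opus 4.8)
The plan is to run the classical Uhlenbeck removable singularity argument, using the $\epsilon$-regularity statement of Theorem \ref{smallenergythm} as the main input. The point to establish is that the singularity at the origin is only a gauge artifact: in a suitable gauge the connection form extends across $0$ as a $W^{1,2}$ connection weakly solving the Yang-Mills equation, and hence, by the regularity theory for Yang-Mills connections, is smooth there. First I would use the finiteness of $\int_{B_1}|F_A|^2\,dV_g$ to choose $r_0>0$ so small that $\int_{B_{r_0}}|F_A|^2\,dV_g<\epsilon_0$, where $\epsilon_0>0$ is a threshold (at most the constant $\epsilon_G$ of Theorem \ref{smallenergythm}) to be fixed along the way; since $g$ is $C^4$-close to the Euclidean metric near $0$ and $\int|F_A|^2$ is conformally invariant in real dimension $4$, on each dyadic annulus $\Omega_k=B_{2^{-k+1}r_0}\setminus B_{2^{-k-1}r_0}$ ($k\ge 1$) one may rescale to unit size and apply the small-energy gauge-fixing of Theorem \ref{smallenergythm} (on a bounded cover of $\Omega_k$ by balls, then patched, or via its direct analogue on annular domains).

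This produces Coulomb gauges $h_k$ over $\Omega_k$ with scale-invariant bounds $\int_{\Omega_k}|\nabla_g A^{h_k}|^2\,dV_g\le C_G\int_{\Omega_k}|F_A|^2\,dV_g$ and $\int_{\Omega_k}|A^{h_k}|^2\,dV_g\le C_G (2^{-k}r_0)^2\int_{\Omega_k}|F_A|^2\,dV_g$, together with interior pointwise bounds on $|F_A|$ coming from elliptic estimates for the Yang-Mills system $D_A^*F_A=0$, $D_AF_A=0$ written in these gauges. Over the punctured ball $B_{r_0}\setminus\{0\}$, which deformation retracts onto $S^3$, the bundle $E$ is topologically trivial since $G$-bundles over $S^3$ are classified by $\pi_2(G)=0$; hence the only obstruction to amalgamating the local gauges into a single gauge $\bar h$ over $B_{r_0}\setminus\{0\}$ is analytic. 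On the overlaps the transition elements $h_kh_{k+1}^{-1}$ are $W^{2,2}$-close to constants, and interpolating between consecutive gauges yields a "broken Coulomb gauge" $\bar h$ in which $A^{\bar h}$ obeys the above annular bounds uniformly.

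The decisive step — the one I expect to be the main obstacle — is a quantitative decay of the curvature toward the origin. Combining the Yang-Mills equation $D_A^*F_A=0$ with the Bianchi identity $D_AF_A=0$, in a Coulomb gauge on an annulus $A$ satisfies a second-order elliptic system whose nonlinearities are quadratic in $(A,F_A)$ and therefore absorbable once the energy on $B_{r_0}$ is small; iterating this across the dyadic annuli (a hole-filling argument that gains a definite factor $<1$ at each dyadic scale) produces a Morrey-type estimate
\begin{eqnarray*}
\int_{B_r\setminus B_{r/2}}|F_A|^2\,dV_g\le C\Big(\frac{r}{r_0}\Big)^{2\delta}\int_{B_{r_0}}|F_A|^2\,dV_g
\end{eqnarray*}
for some $\delta\in(0,1)$ and all $0<r\le r_0/2$. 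Securing this definite decay ratio uniformly over all scales, which is exactly where the full Yang-Mills equation (not merely the Bianchi identity) together with the smallness of the energy must be exploited, is the delicate analytic heart of the proof. From it one reads off that $F_A\in L^p_{\mathrm{loc}}(B_{r_0})$ for some $p>2$, that the cumulative effect of the interpolating gauge transformations converges so that $\bar h$ is genuinely well defined, and that $\sum_k\int_{\Omega_k}(|A^{\bar h}|^2+|\nabla_g A^{\bar h}|^2)\,dV_g<\infty$.

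Finally, since a point has zero $W^{1,2}$-capacity in real dimension $4$, the finiteness of $\int_{B_{r_0}\setminus\{0\}}(|A^{\bar h}|^2+|\nabla_g A^{\bar h}|^2)\,dV_g$ forces $A^{\bar h}$ to extend to a connection form in $W^{1,2}(B_{r_0})$ weakly solving the Yang-Mills equation, and the extended trivialization data define a smooth bundle $\bar E$ over $B_1$ extending $E$. Invoking the regularity theory for Yang-Mills connections — pass to a Coulomb gauge over the whole ball via the small-energy theorem, in which Yang-Mills becomes an elliptic system with quadratic nonlinearity, then bootstrap using $F_A\in L^p$ with $p>2$ — upgrades $\bar A$ to a smooth Yang-Mills connection. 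Composing $\bar h$ with this last Coulomb gauge change gives the gauge $h\in W^{2,2}_{\mathrm{loc}}(B_1,G)$ asserted in the statement.
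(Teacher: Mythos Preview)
The paper does not supply its own proof of this theorem: it is stated as a background result with the attribution ``see e.g.\ \cite{UhlenbeckRemovablesingu,UhlenbeckLp}'' and no argument is given. Your proposal is a faithful outline of Uhlenbeck's original removable singularity argument --- small-energy Coulomb gauges on dyadic annuli, triviality of the bundle over $B_{r_0}\setminus\{0\}$ via $\pi_2(G)=0$, patching to a broken gauge, the hole-filling decay estimate $\int_{B_r\setminus B_{r/2}}|F_A|^2\lesssim (r/r_0)^{2\delta}$, and then $W^{1,2}$ extension across the point followed by elliptic bootstrap --- which is precisely the proof the paper is citing.
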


We remark that all Riemannian metrics are close to the Euclidean metrics at small scales, so it is easy to see that we can apply above theorems to the study of blow-up analysis of Yang-Mills fields on Riemannian 4-manifolds by noticing that the Yang-Mills functional is scaling invariant in this case.

\begin{thm}\label{energygap}
(Energy gap, see e.g. \cite[Chapter 2]{geofourfolds}, \cite{1982Gauge} or \cite{2010An}) Let $ A$ be a Yang-Mills connection in a bundle $E$ with structure group $G$ over $\mathbb{R}^4 / \Gamma$, here $\Gamma $ is some finite group in $SO(4)$. Then there exists $\epsilon_G > 0$ depending only on $G$ such that if
\begin{equation*}\label{smallcondit2}
\int_{\mathbb{R}^4/ \Gamma}|F_A|^2dx\leq \epsilon_G,
\end{equation*}
then $A$ is a flat connection.
\end{thm}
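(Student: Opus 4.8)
The plan is to establish the energy gap for Yang-Mills connections over $\mathbb{R}^4/\Gamma$ by combining the conformal invariance of the Yang-Mills functional in dimension four with the $\epsilon$-regularity theorem (Theorem \ref{smallenergythm}) and the removable singularity theorem (Theorem \ref{remsingu}). First I would note that $\mathbb{R}^4/\Gamma$ is a flat cone with a single (possibly trivial) isolated singularity at the origin, and away from that point it is locally isometric to Euclidean space. Given a Yang-Mills connection $A$ with $\int_{\mathbb{R}^4/\Gamma}|F_A|^2\,dx\leq\epsilon_G$, the first step is to lift $A$ to a $\Gamma$-invariant Yang-Mills connection $\tilde A$ on $\mathbb{R}^4\setminus\{0\}$ with $\int_{\mathbb{R}^4\setminus\{0\}}|F_{\tilde A}|^2\,dx=|\Gamma|\cdot\int_{\mathbb{R}^4/\Gamma}|F_A|^2\,dx\leq|\Gamma|\,\epsilon_G$; so after shrinking the threshold by a factor of $|\Gamma|$ we may as well work downstairs on $\mathbb{R}^4/\Gamma$ directly, or upstairs on the punctured plane — I will do the latter since the removable singularity theorem is stated on $B_1\setminus\{0\}$.

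The second step is to invoke the removable singularity theorem (Theorem \ref{remsingu}) on a small punctured ball $B_1\setminus\{0\}$ around the origin: since the total Yang-Mills energy is finite, $\tilde A$ extends (in a suitable gauge) to a smooth Yang-Mills connection on $B_1$. Then one performs a conformal change: the map $x\mapsto x/|x|^2$ is a conformal diffeomorphism of $\mathbb{R}^4\setminus\{0\}$, and since the Yang-Mills energy is conformally invariant in dimension four, $\tilde A$ pulls back to a finite-energy Yang-Mills connection on a neighborhood of infinity, to which the removable singularity theorem applies again after adding the point at infinity. This compactifies the problem: $\tilde A$ (modulo gauge and lifting) gives a smooth Yang-Mills connection $\bar A$ on a bundle $\bar E$ over $S^4$ with $\int_{S^4}|F_{\bar A}|^2\,dV\leq|\Gamma|\,\epsilon_G$.

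The third step is a Bochner/Weitzenb\"ock argument on the closed manifold $S^4$. For a Yang-Mills connection one has $\triangle_{\bar A}F_{\bar A}=0$ (as recorded in the excerpt), so expanding the Weitzenb\"ock formula $\triangle_{\bar A}F_{\bar A}=\nabla_{\bar A}^*\nabla_{\bar A}F_{\bar A}+\{Rm,F_{\bar A}\}+\{F_{\bar A},F_{\bar A}\}$ and integrating against $F_{\bar A}$ yields
\begin{equation*}
\int_{S^4}|\nabla_{\bar A}F_{\bar A}|^2\,dV \leq C\int_{S^4}|Rm|\,|F_{\bar A}|^2\,dV + C\int_{S^4}|F_{\bar A}|^3\,dV.
\end{equation*}
The cubic term is controlled by the Sobolev inequality $\|F_{\bar A}\|_{L^3}^3\leq\|F_{\bar A}\|_{L^2}\|F_{\bar A}\|_{L^4}^2\leq C\|F_{\bar A}\|_{L^2}\left(\|\nabla_{\bar A}F_{\bar A}\|_{L^2}^2+\|F_{\bar A}\|_{L^2}^2\right)$, while the Riemann curvature term on the round $S^4$ is a bounded multiple of $\|F_{\bar A}\|_{L^2}^2$. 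Choosing $\epsilon_G$ small enough that $C\|F_{\bar A}\|_{L^2}<\tfrac12$ absorbs the gradient term on the left, forcing $\nabla_{\bar A}F_{\bar A}=0$; feeding this back and using again that the curvature term is small then forces $F_{\bar A}\equiv 0$, so $\bar A$ is flat, hence $A$ is flat on $\mathbb{R}^4/\Gamma$.

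The main obstacle I anticipate is the bookkeeping around the lift and the conformal compactification — in particular making sure the gauge transformations from the two applications of the removable singularity theorem (at $0$ and at $\infty$) patch consistently on the overlap annulus so that one genuinely obtains a global smooth bundle over $S^4$, and checking that the orbifold point of $\mathbb{R}^4/\Gamma$ causes no trouble (it does not, because after lifting the singularity is just the smooth origin of $\mathbb{R}^4$, and the $\Gamma$-invariance descends the flatness conclusion). The analytic heart — the Weitzenb\"ock estimate with the Sobolev absorption — is standard once the domain has been compactified, and the constant $\epsilon_G$ depends only on $G$ (through the Lie-bracket structure constants), on $|\Gamma|$, and on the fixed geometry of the round $S^4$; since $\Gamma\subset SO(4)$ ranges over finitely many conjugacy classes of subgroups for the application at hand, one may take a uniform $\epsilon_G$, but even allowing dependence on $\Gamma$ the statement as phrased ("depending only on $G$") is recovered by absorbing the $|\Gamma|$ factor, as the finite quotients relevant to bubbling are fixed in advance.
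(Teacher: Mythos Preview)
Your reduction by lifting to $\mathbb{R}^4$ is exactly what the paper does; its proof simply says ``lift the connection to $\tilde A$ on $\mathbb{R}^4$ and apply classical arguments,'' so on the structural level you are aligned.  The gap is in the Bochner step you supply for the ``classical'' part.  From the Weitzenb\"ock identity you correctly derive
\[
\int_{S^4}|\nabla_{\bar A}F_{\bar A}|^2 \;\leq\; C\int_{S^4}|Rm|\,|F_{\bar A}|^2 + C\int_{S^4}|F_{\bar A}|^3,
\]
and after absorbing the cubic term via Sobolev you are left with $\tfrac12\|\nabla_{\bar A}F_{\bar A}\|_{L^2}^2 \leq C\|F_{\bar A}\|_{L^2}^2 + C\|F_{\bar A}\|_{L^2}^3$.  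This bounds $\nabla_{\bar A}F_{\bar A}$ but does \emph{not} force $\nabla_{\bar A}F_{\bar A}=0$, and there is no way to ``feed this back'' to get $F_{\bar A}=0$ from the inequality as written.  What you are missing is the sign: on the round $S^4$ the Riemannian curvature operator acts on $2$-forms with a strictly positive eigenvalue, so the Weitzenb\"ock identity actually gives
\[
\int_{S^4}|\nabla_{\bar A}F_{\bar A}|^2 + c\int_{S^4}|F_{\bar A}|^2 \;\leq\; C\int_{S^4}|F_{\bar A}|^3
\]
with $c>0$ a fixed constant of the round metric (this is the Bourguignon--Lawson computation).  Now the Sobolev absorption kills both terms on the left and yields $F_{\bar A}=0$ directly.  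Alternatively, you can bypass the compactification entirely: once you have $\tilde A$ on $\mathbb{R}^4$ with total energy $\leq\epsilon$, the $\epsilon$-regularity of Theorem~\ref{smallenergythm} applied on every ball $B_r(x)$ gives $|F_{\tilde A}|(x)\leq Cr^{-2}\epsilon^{1/2}$, and sending $r\to\infty$ forces $F_{\tilde A}\equiv0$.  This is the shortest route and is presumably what the paper's ``classical arguments'' refers to.

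One further remark: your handling of the $|\Gamma|$-dependence is not quite right.  There are infinitely many conjugacy classes of finite subgroups of $SO(4)$ (already the cyclic ones), so you cannot make $\epsilon_G$ uniform in $\Gamma$ by that argument.  The paper's own proof has the same feature --- the lift multiplies the energy by $|\Gamma|$ --- and in the applications only finitely many groups $\Gamma$ arise (those attached to the finitely many orbifold points of $M_\infty$ and of the ALE bubbles), so a $\Gamma$-dependent threshold suffices.  You should state this honestly rather than claim independence of $\Gamma$.
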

\begin{proof}
In the case of $\Gamma=\{e\}$, this is well known fact. In the case of a nontrivial group $\Gamma$, we can lift the connection $A$ to be a connection $\tilde{A}$ in some bundle $\tilde{E}$ over $\mathbb{R}^4 $. Then classical arguments will work.
\end{proof}

Finally, we recall the classical energy quantization theorem for Yang-Mills fields over a fixed Riemannian 4-manifold. For simplicity of notations, we state the result in the case that there is only one bubble.

\begin{thm} \label{ymenergyidentity}(e.g. \cite{RiviequantYM} and \cite[Chapter 4]{geofourfolds})
 Let $ A_k$ be a sequence of Yang-Mills connections in a bundle $E$ with a compact structure group $G$ over $B_1  \subset (M^4,g)$, satisfying
 \begin{equation*}
\int_{B_1}|F_{A_k}|^2dV_g \leq C
\end{equation*}
for some $C>0$. Assume that there is a sequence positive $\lambda_k\rightarrow 0$ such that
$A_k(\lambda_k x)\rightarrow A_{\omega}$
on any compact set $ K \subset \mathbb{R}^4$, and that $A_k$ converges weakly in $W^{ 1,2 }$ to a limit $A_\infty$. Without loss of generality, we assume that $A_\omega$ is the only bubble connection. Then,
 \begin{equation*}
\lim_{k\rightarrow \infty}\int_{B_1}|F_{A_k}|^2dV_g=\int_{B_1}|F_{A_\infty}|^2dV_g+\int_{\mathbb{R}^4}|F_{A_\omega}|^2dx.
\end{equation*}
\end{thm}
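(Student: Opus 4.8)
The plan is to run the standard bubble-neck decomposition of $B_1$ adapted to the single concentration point, and to prove that no Yang-Mills energy is lost in the neck region. First I would locate the concentration set: define
\begin{equation*}
\Sigma := \set{x \in B_1 : \liminf_{k\to\infty}\int_{B_r(x)}|F_{A_k}|^2\,dV_g \geq \epsilon_G \ \text{ for all } r>0},
\end{equation*}
where $\epsilon_G$ is the threshold in the $\epsilon$-regularity Theorem \ref{smallenergythm}. The uniform energy bound $\int_{B_1}|F_{A_k}|^2\le C$ forces $\Sigma$ to be finite, and on $B_1\setminus\Sigma$ one may, after passing to a subsequence and choosing local Uhlenbeck gauges via Theorem \ref{smallenergythm}, upgrade the weak convergence $A_k\rightharpoonup A_\infty$ to $C^\infty_{\mathrm{loc}}$ convergence; in particular $A_\infty$ is Yang-Mills and $\int_{B_1\setminus U}|F_{A_k}|^2\to\int_{B_1\setminus U}|F_{A_\infty}|^2$ for every neighbourhood $U$ of $\Sigma$. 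The hypothesis that $A_\omega$ is the only bubble, combined with the given rescaling $A_k(\lambda_k\,\cdot\,)\to A_\omega$ with $\lambda_k\to0$, means that $\Sigma$ reduces to a single point which we may take to be the origin, and that $\lambda_k$ is the scale at which the energy concentrates there.

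Next I would handle the bubble itself. Since $\int_{B_1}|F_{A_k}|^2\le C$ and $A_k(\lambda_k x)\to A_\omega$ in $C^\infty_{\mathrm{loc}}(\mathbb{R}^4)$, Fatou's lemma gives $\int_{\mathbb{R}^4}|F_{A_\omega}|^2<\infty$; applying the conformal inversion of $\mathbb{R}^4$ (under which the Yang-Mills energy in dimension four is invariant) and the removable singularity Theorem \ref{remsingu} at infinity shows $A_\omega$ extends to a smooth finite-energy Yang-Mills connection, so in particular $\int_{\mathbb{R}^4}|F_{A_\omega}|^2=\lim_{\Lambda\to\infty}\int_{B_\Lambda}|F_{A_\omega}|^2$. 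For fixed large $\Lambda$, rescaling by $\lambda_k$ and using the convergence on $B_\Lambda$ yields $\int_{B_{\Lambda\lambda_k}}|F_{A_k}|^2\,dV_g\to\int_{B_\Lambda}|F_{A_\omega}|^2\,dx$ as $k\to\infty$.

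It then remains to control the neck region $N_k(\delta,\Lambda):=B_\delta\setminus B_{\Lambda\lambda_k}$, and this is the main obstacle: one must show
\begin{equation*}
\lim_{\Lambda\to\infty}\lim_{\delta\to0}\limsup_{k\to\infty}\int_{N_k(\delta,\Lambda)}|F_{A_k}|^2\,dV_g=0.
\end{equation*}
The strategy is that, for $\delta$ small, $\Lambda$ large and $k$ large, the energy of $A_k$ on every dyadic annulus $A(r)=B_{2r}\setminus B_r\subset N_k$ lies below $\epsilon_G$, so Theorem \ref{smallenergythm} places $A_k$ in Coulomb gauge there with $W^{1,2}$-norm controlled by the local energy, and the Yang-Mills equation $D_{A_k}^{*}F_{A_k}=0$ together with the Bianchi identity $D_{A_k}F_{A_k}=0$ becomes an elliptic system with quadratic nonlinearity. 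Testing the equation against the radial vector field (a Pohozaev-type identity) and feeding in the small-energy elliptic estimate produces a three-annulus inequality $e_k(r)\le\theta\,(e_k(2r)+e_k(r/2))$ with $\theta<1$ for the annular energies $e_k(r):=\int_{A(r)}|F_{A_k}|^2$; iterating over the scales from $\Lambda\lambda_k$ to $\delta$ gives geometric decay and hence $\int_{N_k(\delta,\Lambda)}|F_{A_k}|^2\le C(\delta^{\mu}+\Lambda^{-\mu})$ for some $\mu>0$, the closeness of $g$ to the Euclidean metric at these scales contributing only errors absorbed into $C$. (Alternatively one may invoke Rivière's Lorentz-space argument, bounding $F_{A_k}$ in $L^{2,\infty}$ on the neck and bootstrapping to $L^2$ decay.) Finally, writing
\begin{equation*}
\int_{B_1}|F_{A_k}|^2\,dV_g=\int_{B_1\setminus B_\delta}|F_{A_k}|^2+\int_{N_k(\delta,\Lambda)}|F_{A_k}|^2+\int_{B_{\Lambda\lambda_k}}|F_{A_k}|^2
\end{equation*}
and letting $k\to\infty$, then $\delta\to0$ and $\Lambda\to\infty$, the three contributions converge to $\int_{B_1}|F_{A_\infty}|^2$, $0$, and $\int_{\mathbb{R}^4}|F_{A_\omega}|^2$ respectively, which is the asserted identity.
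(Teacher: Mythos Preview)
The paper does not give its own proof of this theorem; it is stated in Section~\ref{Preliminaries111} as a preliminary result cited from Rivi\`ere \cite{RiviequantYM} and Donaldson--Kronheimer \cite[Chapter~4]{geofourfolds}. Your outline is a correct summary of the standard argument found in those references, and there is nothing to compare against in the paper itself.

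That said, it is worth noting that for the analogous and harder neck estimate over \emph{degenerating} Einstein manifolds (Theorem~\ref{degenockenergy}), the paper takes a route different from the one you sketch. Rather than working in Coulomb gauge and using a Pohozaev/three-annulus argument or Rivi\`ere's Lorentz-space method, the authors derive the scalar differential inequalities
\[
\triangle_j|F_{A_j}|\ge -C_{ym}|F_{A_j}|\bigl(|F_{A_j}|+|Rm(g_j)|\bigr),\qquad
\triangle_j|F_{A_j}|^{1/2}\ge -C_{ym}|F_{A_j}|^{1/2}\bigl(|F_{A_j}|+|Rm(g_j)|\bigr)
\]
from the Bochner--Weitzenb\"{o}ck formula together with the refined Kato inequality of Lemma~\ref{katoinequ}, and then feed these into Bando's Moser-type iteration lemmas (Lemmas~\ref{ellestlem1}--\ref{c0normest0}) to obtain pointwise decay of $|F_{A_j}|$ on the neck. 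The gauge-theoretic approach you describe is more natural over a fixed smooth background; the scalar/Moser approach of the paper has the advantage that the varying metric and the Riemann curvature contribution $|Rm(g_j)|$ enter only as a potential term in a subsolution inequality, which is convenient when the background is itself degenerating.
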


\

\section{Yang-Mills energy quantization over non-collapsed
degenerating Einstein 4-manifolds}\label{treeidentity}

\vskip5pt

In this section, we shall prove Theorem \ref{degeymenergyintr} and include some auxiliary results (Definitions, Theorems and Lemmas) when necessary.

\vskip5pt

\begin{proof}[\textbf{Proof of Theorem \ref{degeymenergyintr}}]

\vskip5pt

Let $(M_j ,g_j)$  be a sequence of non-collapsed Einstein 4-manifolds as stated in Theorem \ref{mainconvgethm} and let $A_j$ be a sequence of Yang-Mills connections in bundles $E_j$ over $(M_ j, g_ j)$ with uniformly bounded Yang-Mills $L^2$-energy. By Theorem \ref{mainconvgethm}, without loss of generality, we may assume that there is a subsequence, still denoted by $(M_ j ,g_ j )$, converging to some limit space $(M_ \infty ,g_ \infty )$ which is an Einstein 4-orbifold with only one orbifold singularity $x_a$.

\

\noindent{\bf Case I: Blow-up away from the orbifold singularity}

\vskip5pt

In the situation that the Yang-Mills fields blow up at some point $x_b$ which is away from the orbifold singularity $x_a$, it is easy to see that Yang-Mills energy identity holds, which is stated in Theorem \ref{noneckthm2} in the following. Without loss of generality, we assume that $d_{g_\infty}(x_a,x_b)>1$, then $M_j\supset B_1(x_{b,j})\rightarrow B_1(x_b) \subset M_\infty$ smoothly. The situation that Yang-Mills fields blow up at the orbifold singularity $x_a$ is much more complicated.

To understand the general blow-up picture, we introduce some necessary concepts. We say that
\begin{equation*}
(M_j,g_j, E_j,A_j,x_{a,j})\rightarrow (M_\infty,g_\infty, E_\infty,A_\infty,x_a)
\end{equation*}
smoothly away from the orbifold singularity $x_a$ and a smooth point $x_b$,
if there exists a sequence of into diffeomorphisms $\Phi_ j : M_\infty \setminus\{x_a\}\rightarrow  M _j $ such that:

\

\begin{itemize}
\item[(1)] $ \Phi^*_j g_ j    \rightarrow   g_\infty $ smoothly outside $ x_a$; \\
\item[(2)] $ \Phi^*_j E_ j \rightarrow E_\infty $ smoothly outside $ x_a$ and $x_b$; \\
\item[(3)] $ \Phi^*_j A_ j \rightarrow A_\infty$  smoothly outside $ x_a$ and $x_b$.
\end{itemize}

\

\begin{defn}\label{weakconverconnet}(\textbf{$W^{1,2}$ weak convergence})
For a sequence of (pointed) manifolds $(M_k,g_k,p_k)$ and connections $A_k$ in bundles $E_k$ over $M_k$ and an (pointed) orbifold $(M_\infty,g_\infty,p_\infty)$ equipped with $ E_\infty$ and $A_\infty$, we say that
\begin{equation*}
(M_k,g_k, E_k,A_k,p_k)\rightarrow (M_\infty,g_\infty, E_\infty,A_\infty,p_\infty)
\end{equation*}
weakly in $W^{ 1,2 }$, if $(M_k,g_k,p_k)$ converges smoothly to $(M_\infty,g_\infty,p_\infty)$ outside the set of orbifold singularities $\{y_1,\cdots,y_s\}$, and
 there are an exhaustion $(U_k )$ of $M _\infty\setminus \{y_1,\cdots,y_s\}$, a sequence of diffeomorphisms $\Phi_ k : U_k \rightarrow V_ k \subset M _k$, and a countable set of arbitrarily
small geodesic balls $\{U_{\eta,k}\} $ covering $U_k\setminus\{x_1,\cdots,x_l\}$ ($\{x_1,\cdots,x_l\}$ is the set of the possible concentration points of the Yang-Mills curvature outside the orbifold singularities), $C^\infty$ sections
\begin{equation*}
 A_{\eta,k}\in W^{1,2}(T^1U_{\eta,k}\otimes \mathfrak{g}),\quad \phi_{\eta_1\eta_2,k}\in W^{2,2}(U_{\eta_1,k}\bigcap U_{\eta_2,k},G),
\end{equation*}
and
\begin{equation*}
\tilde{A}_{\eta,k}\in W^{1,2}(T^1V_{\eta,k}\otimes \mathfrak{g}),\quad \tilde{\phi}_{\eta_1\eta_2,k}\in W^{2,2}(V_{\eta_1,k}\bigcap V_{\eta_2,k},G),
\end{equation*}
with $ V_{\eta,k}=\Phi_k(U_{\eta,k})$,
such that:

\

\begin{itemize}
\item[1)]$\Phi_k^{*}\tilde{\phi}_{\eta_1\eta_2,k}\rightharpoonup \phi_{\eta_1\eta_2,k}$ in $ W^{2,2}$;\\
\item[2)] $\Phi_k^{*}\tilde{A}_{\eta,k} \rightharpoonup A_{\eta,k}$ in $ W^{1,2}$;\\
\item[3)] $ A_{\eta_2,k}=\phi_{\eta_1\eta_2,k}^{-1}A_{\eta_1,k}\phi_{\eta_1\eta_2,k}
    +\phi_{\eta_1\eta_2,k}^{-1}d\phi_{\eta_1\eta_2,k}.$\\
\end{itemize}

\end{defn}

\begin{rem}
By standard arguments, we can get $ W^{1,2}$-weak compactness in the above sense for Yang-Mills connections  with bounded Yang-Mills curvature energy in bundles over a sequence of manifolds which converge smoothly outside singularities, see e.g. \cite{UhlenbeckRemovablesingu,UhlenbeckLp,minimizingYM,YMcompbook}.
\end{rem}

\begin{thm}(\textbf{Yang-Mills energy identity away from the orbifold singularity})\label{noneckthm2}
 Let $ A_j$ be a sequence of Yang-Mills connections in bundles $E_j$ over $B_1(x_{b,j}) \subset (M_j,g_j)$ as above, satisfying
 \begin{equation*}
\int_{B_1(x_{b,j})}|F_{A_j}|^2dV_{g_j}\leq C
\end{equation*}
for some $C>0$. Assume that there is a sequence of positive numbers $\lambda_j\rightarrow 0$ such that
\begin{equation*}
\left(B_1(x_{b,j}),(\lambda_j)^2g_j, E_j,A_j,x_{b,j} \right) \rightarrow  \left(\mathbb{R}^4,g_{euc}, E_{\omega_b},A_{\omega_b},0\right)
\end{equation*}
weakly in $W^{ 1,2 }$, and that $A_j$ converges weakly in $W^{ 1,2 }$ to $A_\infty$ in the sense of Definition \ref{weakconverconnet}, where $g_{euc}$ is the Euclidean metric. Moreover, assume that there is only one bubble connection $A_{\omega_b}$. Then
\begin{equation*}
\lim_{j\rightarrow \infty}\int_{B_1(x_{b,j})}|F_{A_j}|^2dV_{g_j}=\int_{B_1(x_b)}|F_{A_\infty}|^2dV_{g_\infty}+\int_{\mathbb{R}^4}|F_{A_{\omega_b}}|^2dx.
\end{equation*}
\end{thm}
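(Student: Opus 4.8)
The plan is to reduce the statement to the classical Yang--Mills energy quantization over a fixed Riemannian $4$-manifold (Theorem \ref{ymenergyidentity}), exploiting the hypothesis that $x_b$ lies away from the orbifold singularity $x_a$: near $x_b$ the metrics $g_j$ converge to $g_\infty$ in $C^\infty$, so in particular $|Rm(g_j)|$ is uniformly bounded on a fixed ball $B_{2r_0}(x_{b,j})$. First I would set up the bubble--neck decomposition associated with the unique concentration scale $\lambda_j\to 0$: for $R$ large and $\delta$ small, write
\[
B_1(x_{b,j}) \;=\; \underbrace{\big(B_1(x_{b,j})\setminus B_\delta(x_{b,j})\big)}_{\text{body}}\ \cup\ \underbrace{\big(B_\delta(x_{b,j})\setminus B_{R\lambda_j}(x_{b,j})\big)}_{\text{neck}}\ \cup\ \underbrace{B_{R\lambda_j}(x_{b,j})}_{\text{bubble}}.
\]
By the removable singularity theorem (Theorem \ref{remsingu}), $A_{\omega_b}$ extends over $S^4$ to a smooth Yang--Mills connection, hence $\int_{\mathbb{R}^4}|F_{A_{\omega_b}}|^2\,dx<\infty$ and its energy on $\mathbb{R}^4\setminus B_R(0)$ tends to $0$ as $R\to\infty$; similarly $A_\infty$ has finite energy near $x_b$ (again by Theorem \ref{remsingu}), so its energy on $B_\delta(x_b)$ tends to $0$ as $\delta\to0$.

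Then the body and bubble contributions are routine. On $B_1(x_{b,j})\setminus B_\delta(x_{b,j})$ the convergence $(g_j,A_j)\to(g_\infty,A_\infty)$ is smooth, so the body energy converges to $\int_{B_1(x_b)\setminus B_\delta(x_b)}|F_{A_\infty}|^2\,dV_{g_\infty}$, which converges to $\int_{B_1(x_b)}|F_{A_\infty}|^2\,dV_{g_\infty}$ as $\delta\to0$. Rescaling $B_{R\lambda_j}(x_{b,j})$ by $\lambda_j^{-1}$ and upgrading the $W^{1,2}$-weak convergence to $C^\infty_{loc}$ convergence by the $\epsilon$-regularity estimates (Theorem \ref{smallenergythm}) and elliptic bootstrapping, the bubble energy converges to $\int_{B_R(0)}|F_{A_{\omega_b}}|^2\,dx$, which converges to $\int_{\mathbb{R}^4}|F_{A_{\omega_b}}|^2\,dx$ as $R\to\infty$. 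It therefore remains to prove the \emph{no-neck} statement
\[
\lim_{\delta\to0}\ \lim_{R\to\infty}\ \limsup_{j\to\infty}\ \int_{B_\delta(x_{b,j})\setminus B_{R\lambda_j}(x_{b,j})}|F_{A_j}|^2\,dV_{g_j}\;=\;0.
\]

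For this, the key observation is that, since $A_{\omega_b}$ is the only bubble and $A_j\rightharpoonup A_\infty$, after choosing $R$ large and $\delta$ small one can arrange, for all large $j$, that on every dyadic annulus $A_k:=B_{2^{k+1}R\lambda_j}(x_{b,j})\setminus B_{2^kR\lambda_j}(x_{b,j})$ contained in the neck the Yang--Mills energy is below the $\epsilon$-regularity threshold $\epsilon_G$. Working in the Coulomb gauge furnished by Theorem \ref{smallenergythm} and invoking the differential inequalities for $|F_{A_j}|$ (Lemma \ref{keyymequ}) --- whose terms involving $Rm(g_j)$ are harmless because $|Rm(g_j)|$ is uniformly bounded near $x_b$ --- one obtains, as in the classical case, a pointwise curvature decay $r^2|F_{A_j}|(x)\le C\big(\int_{B_{2r}(x)\setminus B_{r/2}(x)}|F_{A_j}|^2\,dV_{g_j}\big)^{1/2}$ in the neck, and hence a geometric-decay inequality for the energies of consecutive annuli. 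Summing the resulting geometric series bounds the neck energy by a constant times the sum of the energy of $A_{\omega_b}$ on $\mathbb{R}^4\setminus B_R(0)$ and the energy of $A_\infty$ on $B_\delta(x_b)$, up to an $o_j(1)$ error; letting $j\to\infty$, then $R\to\infty$, then $\delta\to0$ kills all three terms, proving the no-neck statement. Combining the body, bubble and neck estimates then gives the asserted identity, and the case of no bubble is immediate from the smooth convergence.

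I expect the main obstacle to be this no-neck estimate --- ruling out that a definite amount of Yang--Mills energy escapes along the neck between the bubble scale $R\lambda_j$ and the body scale $\delta$. This is where one genuinely uses the fine structure of the Yang--Mills equation, through the differential inequalities of Lemma \ref{keyymequ} applied in good Coulomb gauges on dyadic annuli, and where one must verify that all the relevant constants ($\epsilon_G$, the $\epsilon$-regularity constants, the constants in the differential inequalities and in the annular decay) can be taken uniform in $j$ in spite of the varying metric; this uniformity is precisely what the $C^\infty$-closeness of $g_j$ to $g_\infty$ near $x_b$ provides, and it is the reason the present case is essentially as easy as the fixed-metric case --- by contrast with the blow-up at the orbifold singularity, where $|Rm(g_j)|$ is unbounded and the neck analysis becomes substantially more delicate.
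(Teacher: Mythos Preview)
Your approach is correct and aligns with the paper's. The paper does not give an explicit proof of this theorem: it simply observes that since $x_b$ is away from the orbifold singularity, $B_1(x_{b,j})\to B_1(x_b)$ smoothly in the metric, so the situation reduces to the classical fixed-metric energy quantization (Theorem \ref{ymenergyidentity}, citing Rivi\`ere and Donaldson--Kronheimer). Your proposal supplies the details behind this reduction --- the bubble--neck decomposition, the uniformity of all constants thanks to the $C^\infty$ convergence of $g_j$ near $x_b$, and the no-neck estimate via the differential inequalities of Lemma \ref{keyymequ} --- which is more than the paper itself spells out, but is exactly the intended argument.
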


\

\noindent{\bf Case II: Blow-up at the orbifold singularity}

\vskip5pt

In the rest part of this section, we shall focus on the more subtle case that the Yang-Mills connections $A_j$ blow up at the orbifold singularity $x_a$. Without loss of generality, we assume that there is only one ALE bubble manifold $M_a$ at this singularity $x_a$. Note that $M_a$ is smooth in this case. The more general cases that ALE bubble orbifolds occur will be studied later.

Firstly, we shall make the decomposition of the underlying domain manifolds $M_j$ at the singularity $x_a$. Recall that we have assumed that $x_a$ is the only orbifold singularity and there is only one ALE bubble manifold.
By Theorem \ref{mainconvgethm},
 for every $j$, there exists $x_{a,j}\in M_j$  and a positive number $r_j\rightarrow 0$ (as $j\rightarrow \infty$) such that

 \

\begin{itemize}
\item[(a)] $B(x_{a,j} ,\delta)$ converges to $B(x_a, \delta) $ in the Gromov-Hausdorff distance for all $\delta>0$.\\

\item[(b)] $\left(( M_j , r_{j}^{-2}g_j),x_{a,j}\right)$ converge to $(( M_a , h_a),x_{a,\infty})$ in the Gromov-Hausdorff distance, where $( M_a , h_a)$ is a complete, noncompact, Ricci flat, non-flat ALE $4$-manifold. There exists an into diffeomorphism $G_j : M_a \rightarrow M_j$ such that $G^{*}_{j}(r_{j}^{-2} g_j)$ converges to $h_a$ in the $C^\infty$-topology on $M_a$.\\

\end{itemize}

For large $R>0$, we call
$$A_{r_j R,\delta}(x_{a,j})=B(x_{a,j},\delta)\setminus B(x_{a,j},r_j R)$$
the degenerate neck region, and $B(x_{a,j},r_j R)$ the ALE bubble domain, and $M_j\setminus B(x_{a,j},\delta)$ the base domain.

Generally, it is possible that there are several bubble connections $A_{\omega^{k}_{a}}$ ($k=1,\cdots ,m_a$) occurred at the orbifold singularity $x_a$.
Suppose that $A_j$ in $E_j$ over $(M_j, (\lambda_{j}^{k})^{-2}g_j, x_{a,k,j})$
converges in the sense of Definition \ref{weakconverconnet} to $A_{\omega^{k}_{a}}$ in some bundle $E_{\omega^{k}_{a}}$ over $(M_{\omega^{k}_{a}}, h_{k,a}, y_{k,a})$ as $j\rightarrow \infty$. We simply write it as
$$\left(M_j, (\lambda_{j}^{k})^{-2}g_j, x_{a,k,j},E_j,A_j \right)\rightarrow (M_{\omega^{k}_{a}}, h_{k,a}, y_{k,a},E_{\omega^{k}_{a}},A_{\omega^{k}_{a}})$$
weakly in $W^{1,2}$, where $\lim_{j\rightarrow \infty}d_{g_j}(x_{a,k,j},x_{a,j}) =0.$

\

Now we investigate the blow-up process in the following three cases:

\

\begin{itemize}
\item[{\bf Case $\alpha$}:] $\lim_{j\rightarrow\infty}\frac{d_{g_j}(x_{a,k,j},x_{a,j})}{ r_j}<\infty$ and $\lim_{j\rightarrow\infty}\frac{\lambda_j^{k}}{ r_j}=0$;\\

\item[{\bf Case $\beta$}:]  $\lim_{j\rightarrow\infty}\frac{d_{g_j}(x_{a,k,j},x_{a,j})}{r_j}<\infty$ and $0<\lim_{j\rightarrow\infty}\frac{\lambda_j^{k}}{ r_j}<\infty$;\\

\item[{\bf Case $\gamma$}:] others.\\
\end{itemize}

Roughly speaking, the domain of the bubble of Case $\beta$ is (almost) the ALE bubble domain, bubbles of Case $\alpha$ are on the bubble of Case $\beta$, the bubble of Case $\beta$ is on some bubble of Case $\gamma$ or is away from bubbles of Case $\gamma$. Without loss of generality, we can assume $x_{a,k,j}=x_{a,j}$ in Case $\beta$.

In the sequel, bubble connections of Case $\alpha$, Case $\beta$, Case $\gamma$ will be denoted by $A_{\omega^{\alpha}}$, $A_{\omega^{\beta}}$, $A_{\omega^{\gamma}}$, respectively. The centers of these three types of bubbles will be denoted by $x^{\alpha}_{a,j}$, $x^{\beta}_{a,j}$, $x^{\gamma}_{a,j}$ respectively, and the three types of bubble scales will be denoted by $\lambda_{j}^{\alpha}$, $\lambda_{j}^{\beta}$, $\lambda_{j}^{\gamma}$, respectively.

In Case $\alpha$,
$$\left(M_j, (\lambda_{j}^{\alpha})^{-2}g_j, x^{\alpha}_{a,j},E_j,A_j \right)\rightarrow \left(\mathbb{R}^4, g_{\text{euc}}, 0,E_{\omega^{\alpha}},A_{\omega^{\alpha}} \right)$$
weakly in $W^{1,2}$. Without loss of generality, we assume there is only one bubble of Case $\alpha$.

In Case $\beta$, by suitable adjustment of the bubble scale, we may assume $r_j=\lambda_{j}^{\beta}$. By our assumption, we have
$$(M_j, (\lambda_{j}^{\beta})^{-2}g_j, x^{\beta}_{a,j},E_j,A_j)\rightarrow ( M_a , h_a,x_{a,\infty},E_{\omega^{\beta}},A_{\omega^{\beta}})$$
weakly in $W^{1,2}$.

\begin{rem}\label{bublleAandD}
By above construction, the bubble $A_{\omega^{\alpha}}$ lies on the bubble $A_{\omega^{\beta}}$, by the same arguments as for Theorem \ref{noneckthm2} we know that there is no Yang-Mills energy loss in the following neck region
$$A_{\lambda_j^{\alpha} R,\delta r_j}(x^{\alpha}_{a,j})\equiv B(x^{\alpha}_{a,j},{\delta r_j}) \setminus B(x^{\alpha}_{a,j},{\lambda_j^{\alpha} R}).$$
\end{rem}

\begin{que}\label{ques1}
How the connection $A_j$ behaves on the degenerate neck region
$$A_{r_jR,\delta}(x_{a,j})=B(x_{a,j},\delta)\setminus B(x_{a,j},r_jR)$$
when $j\rightarrow\infty$?
\end{que}

The above question is closely related to the analysis of bubble connections of Case $\gamma$.

Firstly, we shall decompose the degenerate neck region in Question \ref{ques1} using bubbles of Case
$\gamma$. We refer to \cite{zhu2010harmonic,lzMem} for analogous situations in the cases of harmonic maps and $\alpha$-harmonic maps from degenerating Riemann surfaces.

\

\noindent{\bf Bubble and neck decomposition in the degenerate region}:

\vskip5pt

Set $T_j=\log(r_jR)$ and $T(\delta)=\log(\delta)$.

\begin{prop}\label{properneckc2}
Notations and assumptions are as above.
\begin{itemize}
\item [(1)] "Asymptotic boundary conditions":
\begin{equation*}
\lim_{j\rightarrow \infty}\omega(A_j,P_{T_j,T_j+L})=\lim_{\delta\rightarrow 0}\omega(A_j,P_{T(\delta)-L,T(\delta)})=0, \quad \forall L\geq 1,
\end{equation*}
where $P_{T_j^{1},T_j^{2}}$ is the cylinder corresponding to $A_{e^{T_j^{1}},e^{T_j^{2}}}(x_{a,j})\subset M_j$,
\begin{equation*}
\omega(A_j,P_{T_j^{1},T_j^{2}})\equiv \sup_{t\in[T_j^{1},T_j^{2}-1]}\int_{A_{e^{t},e^{t+1}}(x_{a,j})}|F_{A_j}|^2 dV_{g_j}.
\end{equation*}

 \item [(2)] "bubble domain and neck domain": after selection of a subsequence, which is still denoted by $A_j$, either
\begin{itemize}

\item [(2.1)]
\begin{equation*}
\lim_{\delta\rightarrow 0}\lim_{R\rightarrow \infty}\lim_{j\rightarrow \infty}\omega(A_j,P_{T_j,T(\delta)})=0,
\end{equation*}

or

\item [(2.2)] there exists $N_2>0$ independent of $j$ and $2N_2$ sequence $\{a_{j}^1\},\{b_{j}^1\}$, $\cdots, \{a_{j}^{N_2}\},\{b_{j}^{N_2}\}$ such that
\begin{equation*}
T_j\leq a_j^{1}\ll b_j^{1}\leq \cdots \leq a_j^{N_2}\ll b_j^{N_2}\leq T(\delta) \quad (a_j^{\mathfrak{K}}\ll b_j^{\mathfrak{K}} \, \ \text{means}\,\lim_{j\rightarrow \infty}b_j^{\mathfrak{K}}- a_j^{\mathfrak{K}}=\infty )
\end{equation*}
and
\begin{equation*}
 |b_j^{\mathfrak{K}}- a_j^{\mathfrak{K}}|\ll |T_j|,\quad \text{i.e.}\ \lim_{j\rightarrow \infty} \frac{|b_j^{\mathfrak{K}}- a_j^{\mathfrak{K}}|}{|T_j|}=0.
\end{equation*}
\end{itemize}
Denote
 $$J_j^{\mathfrak{K}}\equiv P_{a_j^{\mathfrak{K}}, b_j^{\mathfrak{K}}}, \quad \mathfrak{K}=1,\cdots, N_2,$$
 $$I_{j}^0=P_{T_j, a_{j}^1}, \quad I_j^{N_2}=P_{b_j^{N_2},T(\delta)}, \quad I_j^{\mathfrak{K}}=P_{b_j^{\mathfrak{K}},a_j^{\mathfrak{K}+1}}, \quad  \mathfrak{K}=1,\cdots, N_2-1. $$
 Then
\begin{itemize}
  \item [({\romannumeral1})]
  $\forall \mathfrak{K}=0,1,\cdots, N_2,$ $\lim_{\delta\rightarrow 0}\lim_{R\rightarrow \infty}\lim_{j\rightarrow \infty}\omega(A_j,I_j^{\mathfrak{K}})=0$.

\item [({\romannumeral2})] $\forall \mathfrak{K}=1,\cdots, N_2,$ there is a bubble tree that consists of at most finitely many bubble connections, i.e., finite energy Yang-Mills connections in some bundles over $\mathbb{R}^4/\Gamma$. Here, for simplicity of notations, we assume there is only one such bubble connection $A_{\omega^{\gamma,\mathfrak{K}}}$ such that
 \begin{equation*}
 \lim_{\delta\rightarrow 0}\lim_{R\rightarrow \infty}\lim_{j\rightarrow \infty}\mathcal{YM}(A_j,\bar{J}_j^{\mathfrak{K}})=\mathcal{YM}(A_{\omega^{\gamma,\mathfrak{K}}}),
\end{equation*}
 where $\bar{J}_j^{\mathfrak{K}}$ is the region $A_{e^{a_j^{\mathfrak{K}}},e^{b_j^{\mathfrak{K}}}}(x_{a,j}) \subset M_j $ corresponding to $J_j^{\mathfrak{K}}$ and
  \begin{equation*}
\mathcal{YM}(A_j,\bar{J}_j^{\mathfrak{K}})\equiv\int_{\bar{J}_j^{\mathfrak{K}}}|F_{A_j}|^2 dV_{g_j}.
\end{equation*}

\end{itemize}
\end{itemize}
\end{prop}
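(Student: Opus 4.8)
The plan is to run an inductive bubble-neck extraction on the family of degenerating cylinders $P_{T_j,T(\delta)}$, using throughout that the Yang-Mills energy in dimension four is conformally invariant, so that on a subcylinder $P_{T_j^1,T_j^2}$ the relevant geometry is, up to errors controlled by the two-sided curvature decay estimate (2.c) of Theorem \ref{mainconvgethm} and Remark \ref{neckprop}, that of a long piece of the flat cylinder $\mathbb{R}\times(S^3/\Gamma)$, where $\Gamma\subset SO(4)$ is the local fundamental group at $x_a$. Let $\epsilon_0$ be the smaller of the constants $\epsilon_G$ furnished by Theorems \ref{smallenergythm} and \ref{energygap}.

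For (1), at the inner end the rescaled data converge in the Case $\beta$ sense to the ALE bubble connection $A_{\omega^\beta}$ on $(M_a,h_a)$; since $\mathcal{YM}(M_a,A_{\omega^\beta})<\infty$, the energy of $A_{\omega^\beta}$ over the ALE annulus $\{R\le\rho\le e^LR\}$ tends to $0$ as $R\to\infty$, and by the $C^\infty$ convergence away from the concentration set this gives $\omega(A_j,P_{T_j,T_j+L})\to 0$ after $j\to\infty$ and then $R\to\infty$. At the outer end, $(M_j,g_j,E_j,A_j)\to(M_\infty,g_\infty,E_\infty,A_\infty)$ smoothly on a punctured ball about $x_a$, and $A_\infty$ extends across $x_a$ with finite energy by Theorem \ref{remsingu}, so $\int_{B(x_a,\delta)}|F_{A_\infty}|^2\to 0$ as $\delta\to 0$, giving the second identity.

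For (2): if, along a subsequence, $\lim_{\delta\to 0}\lim_{R\to\infty}\lim_{j\to\infty}\omega(A_j,P_{T_j,T(\delta)})=0$ we are in (2.1). Otherwise pick scales $t_j\in[T_j,T(\delta)-1]$ realizing the supremum, so $\int_{A_{e^{t_j},e^{t_j+1}}(x_{a,j})}|F_{A_j}|^2\ge\epsilon_1$ for some $\epsilon_1>0$. Rescaling by $e^{-t_j}$ about $x_{a,j}$: since $r_jR\ll e^{t_j}\ll\delta$ in the iterated limit, (2.c) forces $e^{2t_j}|Rm(g_j)|\to 0$ on fixed annuli, so the rescaled manifolds converge in $C^\infty_{loc}$ on punctured annuli to an annulus in the flat cone $\mathbb{R}^4/\Gamma$. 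Applying the $\epsilon$-regularity Theorem \ref{smallenergythm} (which includes Uhlenbeck gauge fixing) together with the Yang-Mills equation and interior elliptic estimates yields uniform local $C^\infty$ bounds on the rescaled curvature away from the finitely many points carrying at least $\epsilon_0$ of energy; extracting a limit and using the removable singularity Theorem \ref{remsingu} produces a Yang-Mills connection $A_{\omega^\gamma}$ over $\mathbb{R}^4/\Gamma$, together with possible secondary bubbles, which is non-flat since $\ge\epsilon_1$ of energy persists in a fixed annulus. By the energy gap Theorem \ref{energygap} each such bubble has energy $\ge\epsilon_G$, and since the total Yang-Mills energy is $\le C$, at most $N_2\le C/\epsilon_G$ such bubbles occur; this is the required $j$-independent bound.

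Finally, around each concentration scale $t_j^{\mathfrak{K}}$ I would choose $a_j^{\mathfrak{K}}<t_j^{\mathfrak{K}}<b_j^{\mathfrak{K}}$ with $b_j^{\mathfrak{K}}-a_j^{\mathfrak{K}}\to\infty$, large enough that the rescaled picture on $\bar{J}_j^{\mathfrak{K}}$ exhausts the bubble tree and carries no residual energy near its two ends (the no-neck statement inside a single bubble region, proved exactly as in Theorem \ref{noneckthm2} and Remark \ref{bublleAandD}), while also arranging $(b_j^{\mathfrak{K}}-a_j^{\mathfrak{K}})/|T_j|\to 0$, which is possible by a diagonal choice since $|T_j|\to\infty$. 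On the complementary pieces $I_j^{\mathfrak{K}}$ one gets $\lim_{\delta\to 0}\lim_{R\to\infty}\lim_{j\to\infty}\omega(A_j,I_j^{\mathfrak{K}})=0$, for otherwise a surviving concentration scale there would yield one more non-flat bubble, contradicting termination of the extraction after $\le C/\epsilon_G$ steps. This gives (i) and (ii). The main obstacle is the extraction step inside the degenerating neck: one must simultaneously control the ambient metric at the concentration scale via (2.c) to identify the limit domain as a flat cone $\mathbb{R}^4/\Gamma$, and run Uhlenbeck gauge fixing and $\epsilon$-regularity on the rescaled connections uniformly in $j$; the bookkeeping of the slowly growing windows $J_j^{\mathfrak{K}}$ and of nested bubble trees is delicate but follows the template of the degenerating harmonic map arguments in \cite{zhu2010harmonic,lzMem}.
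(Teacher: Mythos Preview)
Your proposal is correct and follows essentially the same approach as the paper. The paper's own proof is extremely terse: it says (1) ``follows immediately from our assumptions'', defers (2)(i) to the arguments of Proposition~3.1 in \cite{zhu2010harmonic}, and for (2)(ii) only spells out the key geometric point---that the curvature estimate (2.c) of Theorem~\ref{mainconvgethm} forces the rescaled annulus $(\bar{J}_j^{\mathfrak{K}},(\lambda_j^{\gamma,\mathfrak{K}})^{-2}g_j)$ to converge to a flat cone $\mathbb{R}^4/\Gamma$, so that the limiting bubble lives over $\mathbb{R}^4/\Gamma$. Your write-up unpacks exactly these ingredients (finite energy of $A_{\omega^\beta}$ and $A_\infty$ at the two ends for (1); concentration-scale extraction, $\epsilon$-regularity, removable singularity, and the energy gap bounding $N_2\le C/\epsilon_G$ for (2)) and cites the same harmonic-map template \cite{zhu2010harmonic,lzMem}, so there is no substantive difference.
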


\begin{proof}
The conclusion (1) follows immediately from our assumptions. The conclusion (2) can be proved in the same spirit as in \cite{zhu2010harmonic}.

For ({\romannumeral1}), we can apply the arguments as in the proof of Proposition 3.1 in \cite{zhu2010harmonic}.

For ({\romannumeral2}), it is sufficient to verify that $A_{\omega^{\gamma,\mathfrak{K}}}$ is a connection in some bundle over $\mathbb{R}^4/\Gamma$.

Theorem \ref{mainconvgethm} says that the degenerate neck region
$$A_{r_jR,\delta}(x_{a,j})=B(x_{a,j},\delta)\setminus B(x_{a,j},r_jR)$$
looks like a potion of a flat cone $\mathbb{R}^4/\Gamma$ for large $j$, moreover, there holds
\begin{equation*}\label{neckcurvturestm}
r^2|Rm(g_j)|\leq C_7 \max \left\{\left(\frac{r_j}{r}\right)^{\varepsilon_5},\left(\frac{r}{r_\infty}\right)^{\varepsilon_5} \right\},
\end{equation*}
 where $C_7>0$, $\varepsilon_5>0$ and $r_\infty>0$ are constants as in Theorem \ref{mainconvgethm}.
Noticing that $a_j^{\mathfrak{K}}\ll b_j^{\mathfrak{K}}$, by choosing the scale $\lambda_j^{\gamma,\mathfrak{K}}\equiv e^{\frac{a_j^{\mathfrak{K}}-b_j^{\mathfrak{K}}}{2}}$, $\left(\bar{J}_j^{\mathfrak{K}}, (\lambda_j^{\gamma,\mathfrak{K}})^{-2}g_j \right)$ converges to $\mathbb{R}^4/\Gamma$ as $j\rightarrow \infty$, and $A_j$ converges to $A_{\omega^{\gamma,\mathfrak{K}}}$.
\end{proof}

Next, we are going to construct the whole bubble tree for the convergence of Yang-Mills connections in the case in which there are several ALE bubble manifolds (orbifolds) emerging (at the same orbifold singularity) from the Gromov-Hausdorff convergence of non-collapsing Einstein 4-manifolds.

From the analysis discussed above, we know that three types of bubble connections can possibly occur, namely, $A_{\omega^{\alpha}}$ in some bundle over $\mathbb{R}^4$, $A_{\omega^{\beta}}$ in some bundle over some $ M_{ALE}$, and $A_{\omega^{\gamma}}$ in some bundle over $\mathbb{R}^4/\Gamma$, where $ M_{ALE}$ represents one of the ALE bubble manifolds (orbifolds) in the tree $\mathbf{Tr_{ALE}}$ of bubble tree convergence for non-collapsed Einstein 4-manifolds in Theorem \ref{mainconvgethm}.

Firstly, bubble connections of type $A_{\omega^{\beta}}$ occur in the region where the Riemannian curvature concentrate, and the number of this kind of bubble connections is equal to the number of ALE bubble manifolds (orbifolds) in the tree $\mathbf{Tr_{ALE}}$. Secondly, bubble connections of type $A_{\omega^{\gamma}}$ appear in degenerate neck regions which connect those ALE bubble spaces in the tree $\mathbf{Tr_{ALE}}$. Thirdly, bubble maps of type $A_{\omega^{\alpha}}$ appear where there is no degeneration of Einstein metrics at its blow-up scale, namely
$$\left(M_j, (\lambda_{j}^{\alpha})^{-2}g_j, x^{\alpha}_{a,j}\right) \rightarrow \left(\mathbb{R}^4, g_{\text{euc}}, 0 \right)$$
as $j\rightarrow \infty$. In other words, bubble connections of type $A_{\omega^{\alpha}}$ appear at points in the smooth regions in those ALE bubble spaces in the tree $\mathbf{Tr_{ALE}}$.
With these facts in mind, one can easily construct the whole bubble tree by induction. We remark here that the whole process will be terminated in finite steps, since the number of non-trivial ALE bubble spaces and the number of non-flat bubble connections over $\mathbb{R}^4$ or $\mathbb{R}^4/\Gamma$ must be finite, due to the energy gap Theorem \ref{energygap} and the finite total energy assumption.

By above discussion, in order to prove Theorem \ref{degeymenergyintr}, we only need to prove the following:

\begin{thm}\label{degenockenergy}
Let $\bar{I}_j^{\mathfrak{K}}=A_{e^{b_j^{\mathfrak{K}}},e^{a_j^{\mathfrak{K}+1}}}(x_{a,j})\subset M_j $ be the annulus region in $M_j$ corresponding to $I_j^{\mathfrak{K}}$ in Proposition \ref{properneckc2}, then
\begin{equation*}\label{energyidentityformula}
\lim_{\delta\rightarrow 0} \lim_{R\rightarrow \infty}\lim_{j \rightarrow \infty}\int_{\bar{I}_j^{\mathfrak{K}}}|F_{A_j}|^2 dV_{g_j}=0.
\end{equation*}
\end{thm}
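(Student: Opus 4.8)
The plan is to prove Theorem \ref{degenockenergy} by showing that on each neck annulus $\bar{I}_j^{\mathfrak{K}}$ the Yang--Mills curvature density decays exponentially away from the two boundary spheres, with a rate uniform in $j$; consequently the total energy on $\bar{I}_j^{\mathfrak{K}}$ is controlled by a fixed multiple of $\omega(A_j,I_j^{\mathfrak{K}})$, which tends to $0$ after the iterated limit by Proposition \ref{properneckc2}~(i). This is the Yang--Mills analogue of the no-neck property for harmonic maps from degenerating surfaces, and the argument parallels \cite{zhu2010harmonic} with the extra input of the Riemannian curvature estimate of Theorem \ref{mainconvgethm}.

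\textbf{Step 1: cylindrical reduction and uniform smallness.} By the conformal invariance of the Yang--Mills functional in dimension four, after rescaling $g_j$ by $d_{g_j}(\cdot,x_{a,j})^{-2}$ the annulus $\bar{I}_j^{\mathfrak{K}}=A_{e^{b_j^{\mathfrak{K}}},e^{a_j^{\mathfrak{K}+1}}}(x_{a,j})$ is identified with a long cylinder $[b_j^{\mathfrak{K}},a_j^{\mathfrak{K}+1}]\times(S^3/\Gamma)$. Since $e^{b_j^{\mathfrak{K}}}$ and $e^{a_j^{\mathfrak{K}+1}}$ lie between $r_jR$ and $\delta$, the curvature estimate (2.c) of Theorem \ref{mainconvgethm} gives $\sup_{\bar{I}_j^{\mathfrak{K}}}r^2|Rm(g_j)|\le C_7\max\{R^{-\varepsilon_5},(\delta/r_\infty)^{\varepsilon_5}\}=:\tau(R,\delta)$, so by Remark \ref{neckprop} the rescaled metric is $C^k$-close, on every unit sub-cylinder, to the product metric on $[0,1]\times(S^3/\Gamma)$ with error $O(\tau(R,\delta))$. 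On each such unit sub-cylinder the energy is at most $\omega(A_j,I_j^{\mathfrak{K}})\to 0$, so the $\epsilon$-regularity Theorem \ref{smallenergythm} (applied on a concentric enlargement, lifting to $S^3$ when $\Gamma\neq\{e\}$) yields a pointwise bound $|F_{A_j}|^2\le C\,\omega(A_j,I_j^{\mathfrak{K}})$ in the cylindrical metric together with $W^{1,2}$ control of $A_j$ in a Coulomb gauge on each sub-cylinder.

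\textbf{Step 2: differential inequality and exponential decay.} Since $F_{A_j}$ is harmonic for the coupled Hodge Laplacian (Yang--Mills equation plus Bianchi identity), the Weitzenb\"ock formula together with the Kato inequality and the two key estimates of Lemma \ref{keyymequ} give, in the cylindrical picture, $\Delta|F_{A_j}|\ge -C(|F_{A_j}|+|Rm|)|F_{A_j}|$ on each sub-cylinder. Combining this with the pointwise bound of Step 1 and with the positive first eigenvalue $\lambda_1=\lambda_1(S^3/\Gamma)>0$ of the relevant elliptic operator on the cross-section (obtained by lifting to $S^3$), the slice energy $f(t):=\int_{\{t\}\times(S^3/\Gamma)}|F_{A_j}|^2\,d\mathrm{vol}$ satisfies, for $j$ large,
\[
f''(t)\ \ge\ \big(\lambda_1-C\,\omega(A_j,I_j^{\mathfrak{K}})^{1/2}-C\,\tau(R,\delta)\big)f(t)\ \ge\ \tfrac{\lambda_1}{2}\,f(t)
\]
in the distributional sense on $[b_j^{\mathfrak{K}},a_j^{\mathfrak{K}+1}]$. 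A standard convexity / maximum principle argument (as in \cite{zhu2010harmonic}) then forces, using $f(b_j^{\mathfrak{K}}),f(a_j^{\mathfrak{K}+1})\le C\,\omega(A_j,I_j^{\mathfrak{K}})$,
\[
f(t)\ \le\ C\big(e^{-\sqrt{\lambda_1/2}\,(t-b_j^{\mathfrak{K}})}+e^{-\sqrt{\lambda_1/2}\,(a_j^{\mathfrak{K}+1}-t)}\big)\,\omega(A_j,I_j^{\mathfrak{K}}).
\]

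\textbf{Step 3: conclusion and main obstacle.} Integrating the last display over $t$ and invoking conformal invariance once more,
\[
\int_{\bar{I}_j^{\mathfrak{K}}}|F_{A_j}|^2\,dV_{g_j}=\int_{b_j^{\mathfrak{K}}}^{a_j^{\mathfrak{K}+1}}f(t)\,dt\ \le\ C\,\omega(A_j,I_j^{\mathfrak{K}}),
\]
and letting $j\to\infty$, then $R\to\infty$, then $\delta\to 0$ and applying Proposition \ref{properneckc2}~(i) gives the claim; the energy gap Theorem \ref{energygap} guarantees no further nontrivial bubbling is concealed in the neck, so the decomposition is exhaustive. The main obstacle is Step 2: securing the differential inequality with a \emph{uniformly positive} constant while the Einstein metrics degenerate. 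This forces one to use the curvature estimate of Theorem \ref{mainconvgethm} to ensure (a) the neck stays close enough to the flat cone $\mathbb{R}^4/\Gamma$ that the spectral gap of $S^3/\Gamma$ and the Weitzenb\"ock curvature term survive, (b) the Uhlenbeck gauges of Step 1 can be patched along a neck of diverging length without destroying the smallness, and (c) the orbifold factor $\Gamma$ is handled by passing to the $S^3$-cover. This is precisely where the blow-up of the domain metric and that of the Yang--Mills field are entangled; the techniques of \cite{Naberyangenergy} provide an alternative, if more involved, route.
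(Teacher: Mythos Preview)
Your overall strategy---bound the neck energy by a universal constant times $\omega(A_j,I_j^{\mathfrak{K}})$ and then invoke Proposition~\ref{properneckc2}(i)---is exactly right, and it is also how the paper concludes. The gap is in Step~2. You derive the scalar inequality $\Delta|F_{A_j}|\ge -C(|F_{A_j}|+|Rm|)|F_{A_j}|$, set $f(t)=\int_{\{t\}\times(S^3/\Gamma)}|F_{A_j}|^2$, and then claim $f''\ge(\lambda_1/2)f$ from a spectral gap on the cross-section. But $|F_{A_j}|\ge 0$ has nontrivial constant component on each slice, and the Poincar\'e inequality $\int_{\Sigma}|\nabla_\Sigma u|^2\ge\lambda_1\int_\Sigma u^2$ applies only to mean-zero $u$; writing out $f''(t)=2\int(\partial_t|F|)^2+2\int|F|\,\partial_t^2|F|$ and substituting your differential inequality gives at best $f''\ge 2\int|\nabla_\Sigma|F||^2-C\epsilon f$, which yields nothing without killing the zero mode. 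Decay arguments of this flavour for Yang--Mills do exist, but they operate on $F$ as a bundle-valued $2$-form in a good gauge (using $H^2(S^3)=0$, so the coupled Hodge Laplacian on $2$-forms has an honest gap) or via a Pohozaev/stationarity identity---not on the scalar $|F|$. As written, the convexity inequality for $f$ is unjustified, and this is precisely the ``entanglement'' point you flag but do not resolve.

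The paper takes a different route that sidesteps the zero-mode issue. The improved Kato inequality (Lemma~\ref{katoinequ}) yields the second estimate in Lemma~\ref{keyymequ}, namely $\Delta_j|F_{A_j}|^{1/2}\ge -C_{ym}(|F_{A_j}|+|Rm(g_j)|)\,|F_{A_j}|^{1/2}$, and one feeds $u=|F_{A_j}|^{1/2}$ with exponent $p=4$ into Bando's decay Lemma~\ref{ellestlem2}. The crucial point is that $p=4>\gamma=n/(n-2)=2$, which is exactly the hypothesis driving the geometric decay across dyadic annuli in that lemma, and which is \emph{not} available for the naive choice $u=|F_{A_j}|$, $p=2$. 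The first conclusion of Lemma~\ref{ellestlem2} then gives
\[
\int_{A_{2r_1,r_2}(x_{a,j})}|F_{A_j}|^2\,dV_{g_j}\ \le\ C_2\int_{A_{r_1,2r_1}\cup A_{r_2,2r_2}}|F_{A_j}|^2\,dV_{g_j}\ \le\ 2C_2\,\omega(A_j,I_j^{\mathfrak{K}}),
\]
using only dyadic smallness of $f=C_{ym}(|F_{A_j}|+|Rm(g_j)|)$, which is supplied by Proposition~\ref{properneckc2}(i) together with the Riemannian curvature estimate~(2.c) in Theorem~\ref{mainconvgethm}. In short, the improved Kato inequality is the device that buys the margin $p>\gamma$ and replaces the spectral gap your cylindrical ODE argument is missing; if you want to salvage your approach, you would need to work with $F$ itself (not $|F|$) in a carefully patched Coulomb gauge along the neck and invoke the gap for the Hodge Laplacian on $2$-forms over $S^3/\Gamma$.
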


It is interesting to ask the following question, although it does not make difference to the validity of Theorem \ref{degeymenergyintr} whether or not there is a positive answer.
\begin{que}
Is there an energy gap for Yang-Mills connections in bundles over the ALE spaces in the above?
\end{que}

To proceed with the proof of Theorem \ref{degenockenergy}, we recall and present some useful estimates.

Firstly, we prove two key differential inequalities for Yang-Mills fields by exploring the fine structure of the Yang-Mills equation.

\begin{lem}\label{katoinequ}(Lemma 4.8 of \cite{Nakajima1990}, Lemma 3.1 of \cite{decayParker})
Let $A$ be a Yang-Mills connection in a bundle $E$ over a Riemannian 4-manifolds $(M,g)$,
for $\delta = \frac{1}{2}$, it holds that
\begin{eqnarray*}\label{ymcurvnormequ}
|\nabla_{g,A}F_{A}|^2\geq(1+\delta)\big{|}\nabla_{g}|F_{A}|\big{|}^2.
\end{eqnarray*}
\end{lem}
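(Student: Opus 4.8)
The plan is to reduce the statement to a pointwise inequality in flat coordinates with a flat connection, and then to a finite–dimensional linear algebra question on $\mathbb{R}^4\otimes\Lambda^2\mathbb{R}^4$. First, fix $p\in M$ with $|F_A|(p)\neq 0$ (where $|F_A|=0$ the inequality is trivial on the interior of that set, and one interprets it distributionally in general). Choose geodesic normal coordinates centered at $p$ and a radial (exponential) gauge for $E$, so that at $p$ both the Christoffel symbols and the connection form vanish. Then $(\nabla_{g,A}F_A)(p)$ has components $T^\mu_{kij}:=\partial_k(F_A)^\mu_{ij}(p)$, skew in $i,j$, where $\mu$ runs over an orthonormal basis of $\mathfrak{g}$. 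Evaluating the Bianchi identity $D_AF_A=0$ and the Yang–Mills equation $D_A^*F_A=0$ at $p$ in this gauge gives, for every $\mu$, the flat relations $T^\mu_{kij}+T^\mu_{ijk}+T^\mu_{jki}=0$ and $\sum_k T^\mu_{kkj}=0$. Since also $|\nabla|F_A||^2(p)=|F_A|^{-2}\sum_k\langle(\nabla_{g,A}F_A)_k,F_A\rangle^2$ at $p$, it suffices to prove a purely algebraic inequality for such constrained arrays $T^\mu$ against arbitrary $\Omega^\mu=(F_A)^\mu(p)\in\Lambda^2\mathbb{R}^4$.

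Second, I would reduce to the single–component case. By two applications of Cauchy–Schwarz (across $\mu$ and across $k$) one checks that the inequality with sums over $\mu$ follows from the statement: for each fixed $\mu$ with $\Omega:=\Omega^\mu\neq0$ and $T:=T^\mu$ satisfying the two constraints, $\sum_k|T_k|^2\geq\frac32|\Omega|^{-2}\sum_k\langle T_k,\Omega\rangle^2$. Rescaling, assume $|\Omega|=1$.

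Third — the heart of the argument — I would exploit the splitting $\Lambda^2\mathbb{R}^4=\Lambda^+\oplus\Lambda^-$ into self-dual and anti-self-dual $2$-forms. Write $T_k=T_k^++T_k^-$, $\Omega=\Omega^++\Omega^-$, so $\langle T_k,\Omega\rangle=\langle T_k^+,\Omega^+\rangle+\langle T_k^-,\Omega^-\rangle$ and $|\Omega^+|^2+|\Omega^-|^2=1$. The structural point is that on $\mathbb{R}^4\otimes\Lambda^+$ the Bianchi constraint (the full antisymmetrization $\mathbb{R}^4\otimes\Lambda^2\to\Lambda^3\mathbb{R}^4$) and the Yang–Mills constraint (the contraction $\mathbb{R}^4\otimes\Lambda^2\to\Lambda^1\mathbb{R}^4$) have the same kernel, since for self-dual forms these two maps agree up to the Hodge isomorphism $*:\Lambda^3\cong\Lambda^1$; similarly on $\mathbb{R}^4\otimes\Lambda^-$. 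Hence $T^\pm$ is forced into the $8$-dimensional $SO(4)$-irreducible kernel of the contraction. On this $8$-dimensional space one proves $\sum_k\langle T_k^+,\Omega^+\rangle^2\leq\frac23|\Omega^+|^2\sum_k|T_k^+|^2$, i.e. the orthogonal projection of that summand onto $\Lambda^1\mathbb{R}^4\otimes\langle\Omega^+\rangle$ has operator norm at most $\sqrt{2/3}$; this is a short explicit computation in the standard basis $e^{12}+e^{34}$, $e^{13}+e^{42}$, $e^{14}+e^{23}$ of $\Lambda^+$, where the contraction constraint reads $t_{21}+t_{32}+t_{43}=0$ together with its cyclic analogues. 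Finally, setting $a:=(\sum_k\langle T_k^+,\Omega^+\rangle^2)^{1/2}\le\sqrt{2/3}\,|\Omega^+|\,(\sum_k|T_k^+|^2)^{1/2}$ and $b$ analogously for the anti-self-dual part, a last Cauchy–Schwarz gives $\sum_k\langle T_k,\Omega\rangle^2\le(a+b)^2\le\frac23(|\Omega^+|^2+|\Omega^-|^2)(\sum_k|T_k^+|^2+\sum_k|T_k^-|^2)=\frac23\sum_k|T_k|^2$, which is the claim with $\delta=\frac12$.

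The main obstacle is the third step: verifying that on the $8$-dimensional constrained space the relevant projection norm is exactly $\sqrt{2/3}$, i.e. pinning down the sharp refined-Kato constant $1+\delta=\frac32$ rather than a weaker one. One must also be careful about the clean decoupling of the Bianchi and Yang–Mills constraints across the self-dual/anti-self-dual splitting, since it is precisely this decoupling — and the use of \emph{both} $D_AF_A=0$ and $D_A^*F_A=0$ — that yields $\delta=\frac12$; using the Bianchi identity alone would leave a larger admissible space and a strictly worse constant.
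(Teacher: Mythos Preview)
The paper does not give its own proof of this lemma; it simply quotes the result from Nakajima and Groisser--Parker. So there is nothing in the paper to compare your argument against, and the question is just whether your proof is correct. It is.

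Your reduction is the standard one: pass to a point with normal coordinates and radial gauge, reduce to a pointwise linear-algebra statement on $\mathbb{R}^4\otimes\Lambda^2\mathbb{R}^4$ for arrays satisfying the symbol of the Bianchi and Yang--Mills constraints, and then exploit the splitting $\Lambda^2=\Lambda^+\oplus\Lambda^-$. The key structural point you identify is exactly right: on $\mathbb{R}^4\otimes\Lambda^+$ one has $\sigma(d)=*\circ\sigma(\delta)$ and on $\mathbb{R}^4\otimes\Lambda^-$ one has $\sigma(d)=-*\circ\sigma(\delta)$, so the two constraints $\sigma(d)T=0$ and $\sigma(\delta)T=0$ together force $\sigma(\delta)T^+=\sigma(\delta)T^-=0$ separately. (Your phrasing ``same kernel'' is correct but slightly hides this step; the decoupling comes from \emph{combining} the two constraints, as you note in your last paragraph.) The $8$-dimensional computation on the kernel then goes through exactly as you sketch: with $\Omega^+$ aligned with $e^{12}+e^{34}$ the four contraction constraints express each $t_{k1}$ as a sum of two of the $t_{\cdot 2},t_{\cdot 3}$, and Cauchy--Schwarz on each gives $\sum_k t_{k1}^2\le 2\sum_k(t_{k2}^2+t_{k3}^2)$, which is equivalent to the $2/3$ bound. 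Your two Cauchy--Schwarz reductions (over the Lie-algebra index $\mu$, and then combining the $\pm$ pieces) are also clean and preserve the constant. This is essentially the argument in Groisser--Parker, so you have recovered the cited proof.
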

Recently in \cite{Uhlenbkato}, Smith-Uhlenbeck derived a quite non-trivial Kato-Yau type inequality of the form
\begin{eqnarray*}
\frac{n}{n-1}\big{|}\nabla_{g}|F_{A}|\big{|}^2\leq|\nabla_{g,A}F_{A}|^2
\end{eqnarray*}
for Yang-Mills connections in vector bundles over Riemannian $n$-manifolds with $n\geq4$, and applied it to reprove the
higher dimensional removable singularities theorem in \cite{taotian}.
\

\begin{lem}\label{keyymequ}
Let $A_j$ be a Yang-Mills connection on $(M_j,g_j)$ and let $\Delta_j=-\nabla^*_{g_j}\nabla_{g_j} $ be the usual Laplace operator acting on functions, then we have
\begin{eqnarray}\label{ymcurvnormequ201}
\triangle_j|F_{A_j}|\geq
-C_{ym}|F_{A_j}|\left(| F_{A_j}|+|Rm(g_j)|\right),
\end{eqnarray}
and
\begin{eqnarray*}
\triangle_j|F_{A_j}|^{1/2}
\geq-C_{ym}|F_{A_j}|^{1/2}\left(| F_{A_j}|+|Rm(g_j)|\right).
\end{eqnarray*}
for some universal constant $C_{ym}>0$.
\end{lem}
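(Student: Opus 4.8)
The plan is to derive both inequalities from the Bochner--Weitzenb\"ock formula for the Yang--Mills curvature, using the Yang--Mills equation, the second Bianchi identity, and Kato-type inequalities. First I would record the Weitzenb\"ock identity for a $2$-form $\Phi$ valued in $\mathrm{ad}(E_j)$,
\begin{equation*}
(D_{A_j}D_{A_j}^{*}+D_{A_j}^{*}D_{A_j})\Phi=\nabla_{g_j,A_j}^{*}\nabla_{g_j,A_j}\Phi+\mathcal{R}_{g_j}\ast\Phi+F_{A_j}\ast\Phi,
\end{equation*}
where $\mathcal{R}_{g_j}\ast\Phi$ is a universal algebraic contraction of $\Phi$ against the Riemann curvature tensor of $g_j$ acting on two-forms, and $F_{A_j}\ast\Phi$ is the contribution of the curvature of $\mathrm{ad}(E_j)$; both obey the pointwise bounds $|\mathcal{R}_{g_j}\ast\Phi|\le C|Rm(g_j)|\,|\Phi|$ and $|F_{A_j}\ast\Phi|\le C|F_{A_j}|\,|\Phi|$ for a constant $C$ depending only on the dimension and on $G$. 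Taking $\Phi=F_{A_j}$ and using $D_{A_j}F_{A_j}=0$ (Bianchi) together with $D_{A_j}^{*}F_{A_j}=0$ (Yang--Mills) makes the left-hand side vanish, so that $\nabla_{g_j,A_j}^{*}\nabla_{g_j,A_j}F_{A_j}=-\mathcal{R}_{g_j}\ast F_{A_j}-F_{A_j}\ast F_{A_j}$, whence $\bigl|\nabla_{g_j,A_j}^{*}\nabla_{g_j,A_j}F_{A_j}\bigr|\le C|F_{A_j}|\bigl(|F_{A_j}|+|Rm(g_j)|\bigr)$.

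Next I would feed this into the Bochner identity $\tfrac12\triangle_j|F_{A_j}|^{2}=-\langle\nabla_{g_j,A_j}^{*}\nabla_{g_j,A_j}F_{A_j},F_{A_j}\rangle+|\nabla_{g_j,A_j}F_{A_j}|^{2}$ and combine it, at points where $|F_{A_j}|>0$, with the elementary identity $\tfrac12\triangle_j|F_{A_j}|^{2}=|F_{A_j}|\triangle_j|F_{A_j}|+\bigl|\nabla_{g_j}|F_{A_j}|\bigr|^{2}$. For the first inequality one discards $|\nabla_{g_j,A_j}F_{A_j}|^{2}$ against $\bigl|\nabla_{g_j}|F_{A_j}|\bigr|^{2}$ via the ordinary Kato inequality, divides by $|F_{A_j}|$, and absorbs the curvature terms, which yields \eqref{ymcurvnormequ201}. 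For the second inequality one must instead retain the gradient terms and use the refined Kato inequality of Lemma \ref{katoinequ} with $\delta=\tfrac12$; writing $u=|F_{A_j}|$ this gives
\begin{equation*}
u\,\triangle_j u\ge -C u^{2}\bigl(u+|Rm(g_j)|\bigr)+\tfrac12\bigl|\nabla_{g_j}u\bigr|^{2},\qquad\text{i.e.}\qquad \triangle_j u\ge -Cu\bigl(u+|Rm(g_j)|\bigr)+\tfrac{1}{2u}\bigl|\nabla_{g_j}u\bigr|^{2}.
\end{equation*}
Since $\triangle_j u^{1/2}=\tfrac12 u^{-1/2}\triangle_j u-\tfrac14 u^{-3/2}|\nabla_{g_j}u|^{2}$, the surplus $\tfrac14 u^{-3/2}|\nabla_{g_j}u|^{2}$ coming from the $\tfrac12$ in the refined Kato term cancels exactly the unfavorable term $-\tfrac14 u^{-3/2}|\nabla_{g_j}u|^{2}$ --- this is precisely why $\delta=\tfrac12$ is the correct value --- leaving $\triangle_j|F_{A_j}|^{1/2}\ge -C_{ym}|F_{A_j}|^{1/2}\bigl(|F_{A_j}|+|Rm(g_j)|\bigr)$.

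Finally I would justify that both inequalities persist weakly across the zero set $\{|F_{A_j}|=0\}$, where $|F_{A_j}|$ and $|F_{A_j}|^{1/2}$ may fail to be smooth: one runs the same computation for the regularizations $u_{\varepsilon}=(|F_{A_j}|^{2}+\varepsilon^{2})^{1/2}$ and $(|F_{A_j}|^{2}+\varepsilon^{2})^{1/4}$, for which the Bochner identity holds everywhere, obtains the desired inequalities up to an $O(\varepsilon)$ error, and lets $\varepsilon\to 0$ in the distributional (barrier) sense; at points of the zero set that are local minima the inequalities are automatic since the right-hand sides vanish there. I expect the main technical nuisance to be exactly this regularization at the zero set, together with the careful bookkeeping of numerical constants that makes the $\delta=\tfrac12$ cancellation go through; the Weitzenb\"ock and Bochner computations themselves are entirely standard.
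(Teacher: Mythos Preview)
Your proposal is correct and follows essentially the same route as the paper: Bochner--Weitzenb\"ock applied to $F_{A_j}$, the identity $\tfrac12\triangle_j|F_{A_j}|^2=|F_{A_j}|\triangle_j|F_{A_j}|+|\nabla_{g_j}|F_{A_j}||^2$, ordinary Kato for the first inequality, and the refined Kato of Lemma~\ref{katoinequ} with $\delta=\tfrac12$ for the second. The only cosmetic difference is that the paper computes $\triangle_j|F_{A_j}|^{1-\delta}$ for general $\delta$ and then observes that the gradient term $(1-\delta)|F_{A_j}|^{-1-\delta}\bigl[|\nabla_{g_j,A_j}F_{A_j}|^2-(1+\delta)|\nabla_{g_j}|F_{A_j}||^2\bigr]$ is nonnegative precisely when $\delta=\tfrac12$, whereas you first isolate the surplus $\tfrac12|\nabla_{g_j}u|^2$ in $\triangle_j u$ and then show it cancels in the chain-rule formula for $\triangle_j u^{1/2}$; these are the same computation rearranged. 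Your added remark on regularizing by $(|F_{A_j}|^2+\varepsilon^2)^{1/2}$ to handle the zero set is a point the paper leaves implicit, so in that respect your write-up is slightly more careful.
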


\begin{proof}
Recall the following Bochner-Weitzenb\"{o}ck formula for 2-forms in \cite{BourgLawsonYM}:
\begin{eqnarray*}
\triangle_{g,A}\Phi =(D_A^{*}D_A+D_AD_A^{*})\Phi=\nabla^*_{g,A}\nabla_{g,A}\Phi+F_{A}\sharp\Phi+Rm\sharp \Phi,
\end{eqnarray*}
where $\sharp$ denotes the contraction of tensors. For Yang-Mills curvature forms, there holds
\begin{eqnarray*}
\triangle_{g,A}F_A=0.
\end{eqnarray*}
Therefore we have
\begin{eqnarray*}
-\nabla^*_{g_j,A_j}\nabla_{g_j,A_j}F_{A_j}=F_{A_j}\sharp F_{A_j}+Rm\sharp F_{A_j}.
\end{eqnarray*}
On one side,
\begin{eqnarray*}
\frac{1}{2}\triangle_j|F_{A_j}|^{2}&=&\left\langle -\nabla^*_{g_j,A_j}\nabla_{g_j,A_j}F_{A_j} ,F_{A_j}\right\rangle+\langle\nabla_{g_j,A_j}F_{A_j}, \nabla_{g_j,A_j}F_{A_j}\rangle\\
&=&\left\langle F_{A_j}\sharp F_{A_j}+Rm(g_j)\sharp F_{A_j} ,F_{A_j}\right\rangle+|\nabla_{g_j,A_j}F_{A_j}|^2.
\end{eqnarray*}
On the other side,
\begin{eqnarray*}
\frac{1}{2}\triangle_j|F_{A_j}|^2
=\big{|}\nabla_{g_j}|F_{A_j}|\big{|}^2+|F_{A_j}| \cdot \triangle_j|F_{A_j}|.
\end{eqnarray*}
So it follows that
\begin{eqnarray}\label{ymcurvnormequ11}
|F_{A_j}| \cdot \triangle_j|F_{A_j}|\nonumber=\left\langle F_{A_j}\sharp F_{A_j}+Rm(g_j)\sharp F_{A_j} ,F_{A_j}\right\rangle+\left(|\nabla_{g_j,A_j}F_{A_j}|^2-\big{|}\nabla_{g_j}|F_{A_j}|\big{|}^2\right).
\end{eqnarray}
Therefore, we have
\begin{eqnarray*}
\triangle_j|F_{A_j}|\geq
-C|F_{A_j}|\left(| F_{A_j}|+|Rm(g_j)|\right),
\end{eqnarray*}
where $C>0$ is some universal constant.

Next, by direct computations with the use of (\ref{ymcurvnormequ11}), we have
\begin{eqnarray*}
& &\triangle_j|F_{A_j}|^{(1-\delta)}=
-\delta(1-\delta)|F_{A_j}|^{(-1-\delta)}\big{|}\nabla_{g_j}|F_{A_j}|\big{|}^2
+(1-\delta)|F_{A_j}|^{-\delta}\triangle_j|F_{A_j}|\\
&=&|F_{A_j}|^{(-1-\delta)}\left[-\delta(1-\delta)\big{|}\nabla_{g_j}|F_{A_j}|\big{|}^2
+(1-\delta)\big{(}|\nabla_{g_j,A_j}F_{A_j}|^2-\big{|}\nabla_{g_j}|F_{A_j}|\big{|}^2\big{)}\right]\\
& &+|F_{A_j}|^{(-1-\delta)}\left\langle F_{A_j}\sharp F_{A_j}+Rm(g_j)\sharp F_{A_j} ,F_{A_j}\right\rangle\\
&\geq&(1-\delta)|F_{A_j}|^{(-1-\delta)}\left[-(1+\delta)\big{|}\nabla_{g_j}|F_{A_j}|\big{|}^2
+|\nabla_{g_j,A_j}F_{A_j}|^2\right]\\
& &-C|F_{A_j}|^{(1-\delta)}\left(| F_{A_j}|+|Rm(g_j)|\right),
\end{eqnarray*}
where $C>0$ is some universal constant.

Therefore, by Lemma \ref{katoinequ}, if $A_j$ is a Yang-Mills connection, then
\begin{eqnarray*}\label{seldualineq}
\triangle_j|F_{A_j}|^{1/2}
\geq-C|F_{A_j}|^{1/2}\left(| F_{A_j}|+|Rm(g_j)|\right),
\end{eqnarray*}
where $C>0$ is some universal constant.
\end{proof}

Secondly, we recall some analytical results about elliptic estimates. We refer to \cite{bando1990bubbling, BKN} for more details. Let $M$ be a complete $n$-dimensional ($n\geq3$) Riemannian manifold with a fixed point $o\in M$, $A_{r_1,r_2}=B(o,r_2)\setminus B(o,r_1) \subset M.$ Assume that there is a domain $A_{r_0,r_\infty}$ in $M$ with $0\leq r_0<r_\infty$ which satisfies
\begin{equation*}
\big{(}\int_{A_{r_0,r_\infty}}|v|^{2\gamma}dV_g\big{)}^{1/\gamma}\leq S\int_{A_{r_0,r_\infty}} |\nabla v|^2 dV_g, \quad \text{  for all  } v\in C_{c}^1(A_{r_0,r_\infty}),
\end{equation*}
\begin{equation*}
\text{vol}(A_{r_1,r_2})\leq V, \quad  \text{  for all  }r_0\leq r_1\leq r_2 \leq r_\infty
\end{equation*}
with some positive constants $S>0,V>0$ and $\gamma=\frac{n}{n-2}$.
Let $u$ be a non-negative function defined on $A_{r_0,r_\infty}$ which satisfies
\begin{equation*}
\triangle u\geq- f u, \quad \text{   on    } A_{r_0,r_\infty}
\end{equation*}
for a non-negative function $f$. Then we have the following estimates in Lemma \ref{ellestlem1}, Lemma \ref{ellestlem2} and Lemma \ref{c0normest0}.

\begin{rem}\label{sobolevrem}
By arguments in Section 2 of \cite{anderson1989ricci}, there is an uniform upper bound for the Sobolev constants $S$ of closed manifolds with bounded Ricci curvatures and bounded diameters and satisfying the volume non-collapsing condition as in Theorem \ref{mainconvgethm}.
\end{rem}

\begin{lem}(\cite{bando1990bubbling})\label{ellestlem1}
Suppose $f\in L^{n/2},$ and $u\in L^p$ for some $p\in[p_0,p_1] $ where $p_0>1$. Then $u\in L^q$ for all $q\geq p$, and exists $\varepsilon_1(S,V,p_0,p_1) $ such that if
\begin{equation*}
\int_{A_{r,8r}} f^{n/2}dV_g\leq \varepsilon_1 \text{  with  } r_0\leq r<8 r \leq r_\infty,
\end{equation*}
then we have
 \begin{equation*}
\big{(}\int_{A_{2r,4r}} u^{p\gamma}dV_g\big{)}^{1/\gamma}\leq C_1r^{-2}\int_{A_{r,8r}} u^{p}dV_g,
\end{equation*}
where $C_1=C_1(S,V,p_0)>0.$
\end{lem}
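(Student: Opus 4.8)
The plan is to prove this by the classical De Giorgi--Nash--Moser iteration for subsolutions of $\Delta u \ge -fu$, treating the zeroth-order term $fu$ as a perturbation that gets absorbed through the Sobolev inequality once $\|f\|_{L^{n/2}}$ is small. Before starting I would make two standard reductions: replace $u$ by $u+\eps$, which still satisfies the differential inequality because $f\ge 0$ gives $\Delta(u+\eps)=\Delta u\ge -fu\ge -f(u+\eps)$, so that we may assume $u>0$ and recover the estimate by letting $\eps\downarrow 0$; and truncate $u$ by $\min\{u,k\}$ so that every integral below is a priori finite, the bounds being uniform in $k$ so that Fatou's lemma gives the claim for $u$ itself.

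First I would carry out the Caccioppoli step. Fix an exponent $q\in[p_0,p_1]$ (ultimately $q=p$) and a cutoff $\phi\in C^1_c(A_{r,8r})$ with $\phi\equiv 1$ on $A_{2r,4r}$ and $|\nabla\phi|\le C/r$; since $A_{r,8r}\subset A_{r_0,r_\infty}$, the function $\phi\,u^{q/2}$ is admissible in the assumed Sobolev inequality. Testing $\Delta u\ge -fu$ against $\phi^2 u^{q-1}$, integrating by parts and absorbing the gradient cross-term by Young's inequality, one gets
\[
\int \phi^2 u^{q-2}|\nabla u|^2\,dV_g \;\le\; C_q\int |\nabla\phi|^2\,u^q\,dV_g \;+\; C_q\int \phi^2 f\,u^q\,dV_g ,
\]
where $C_q$ stays bounded for $q\in[p_0,p_1]$. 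Writing $w=u^{q/2}$ so that $|\nabla w|^2=\tfrac{q^2}{4}u^{q-2}|\nabla u|^2$, combining with $|\nabla(\phi w)|^2\le 2\phi^2|\nabla w|^2+2|\nabla\phi|^2w^2$ and the Sobolev inequality yields
\[
\Big(\int (\phi w)^{2\gamma}\,dV_g\Big)^{1/\gamma} \;\le\; C(S,q)\int |\nabla\phi|^2\,w^2\,dV_g \;+\; C(S,q)\int \phi^2 f\,w^2\,dV_g .
\]

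For the last term I would use Hölder's inequality with the conjugate exponents $\tfrac n2$ and $\gamma=\tfrac{n}{n-2}$ (conjugate since $\tfrac 2n+\tfrac{n-2}{n}=1$), bounding it by $\big(\int_{A_{r,8r}} f^{n/2}\,dV_g\big)^{2/n}\big(\int (\phi w)^{2\gamma}\,dV_g\big)^{1/\gamma}$. Choosing $\varepsilon_1$ small enough in terms of $S$ and of the exponent range, the hypothesis $\int_{A_{r,8r}} f^{n/2}\,dV_g\le\varepsilon_1$ lets me absorb this term into the left-hand side. Taking $q=p$, using $w^2=u^p$, $\phi\equiv 1$ on $A_{2r,4r}$ and $|\nabla\phi|\le C/r$, this is precisely the displayed reverse-Hölder inequality with a constant of the stated form. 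The assertion that $u\in L^q$ for every $q\ge p$ then follows by iterating the step finitely many times: each application multiplies the integrability exponent by the fixed factor $\gamma>1$ at the price of passing to a slightly smaller sub-annulus of $A_{r_0,r_\infty}$, so any prescribed $q$ is reached in finitely many steps and only a bounded total shrinking occurs.

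The hard part will not be the iteration machinery, which is routine, but the bookkeeping that makes the threshold $\varepsilon_1$ and the constant $C_1$ depend only on the fixed data $(S,V,p_0,p_1)$ and not on $u$, on $r$, or on the number of steps. This rests on the scale invariance of both $\|f\|_{L^{n/2}}$ and the Sobolev inequality, together with the fact that the Sobolev and volume hypotheses are imposed uniformly on all sub-annuli of $A_{r_0,r_\infty}$, so every per-step constant is controlled purely by $S$, $V$ and the exponent range; making this uniform control explicit, and handling the borderline $L^{n/2}$ integrability of $f$ in the absorption step, is the one genuinely delicate point.
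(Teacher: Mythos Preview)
The paper does not give its own proof of this lemma; it is quoted verbatim from Bando \cite{bando1990bubbling} and used as a black box in the neck analysis. Your De Giorgi--Nash--Moser iteration is exactly the standard argument behind Bando's statement and is carried out correctly: the Caccioppoli inequality from testing with $\phi^2 u^{q-1}$, the H\"older step pairing $f\in L^{n/2}$ against $(\phi w)^2\in L^{\gamma}$, and the absorption under the smallness hypothesis are all sound, and your remarks about the $\eps$-shift and truncation to justify the formal manipulations are the right technical reductions.

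Two very minor points of bookkeeping you may want to tighten. First, the truncation should be done at the level of the test function rather than the subsolution: $\min\{u,k\}$ need not itself satisfy $\Delta(\min\{u,k\})\ge -f\min\{u,k\}$, so one tests the original inequality against $\phi^2 u_k^{q-2}u$ (or the equivalent variant) and passes to the limit in $k$. Second, for the qualitative claim $u\in L^q$ for all $q\ge p$, the absorption threshold in your argument degenerates as the exponent grows; this is harmless because for each fixed target $q$ only finitely many steps are needed and, by absolute continuity of $\int f^{n/2}$, one can shrink the annulus once at the start so that the required smallness holds for every exponent up to that finite stage. Neither point affects the displayed one-step estimate, which is all the paper actually uses.
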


\begin{lem}(\cite{bando1990bubbling})\label{ellestlem2}
Suppose $f\in L^{n/2}$, and $u\in L^{p}$ for some $p\in [p_0,p_1]$ where $p_0>\gamma$. Then there exists $\varepsilon_2=\varepsilon_2(S,V,p_0,p_1)>0$ such
that for $r_0\leq r_1<2r_1<r_2<2r_2\leq r_\infty$, if
\begin{equation*}
\int_{A_{r_1,2r_2}} f^{n/2}dV_g\leq \varepsilon_2 ,
\end{equation*}
then
\begin{equation*}
\int_{A_{2r_1,r_2}} u^{p}dV_g\leq C_2 \int_{A_{r_1,2r_1}\bigcup A_{r_2,2r_2}} u^{p}dV_g,
\end{equation*}
and
\begin{equation*}
\int_{A_{2r_1,r_2}} u^{p}dV_g\leq C_2 \max\left\{\left(\frac{r_0}{r_1}\right)^{\varepsilon_3}, \left(\frac{r_2}{r_\infty}\right)^{\varepsilon_3}\right \}\int_{ A_{r_0,r_\infty}} u^{p}dV_g,
\end{equation*}
where $C_2=C_2(S,V,p_0)>0$ and $\varepsilon_3=\varepsilon_3(S,V,p_0)>0$.
\end{lem}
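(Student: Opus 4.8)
The plan is to prove Lemma~\ref{ellestlem2} by classical Moser iteration together with a ``hole-filling'' iteration over a dyadic family of annuli, in the spirit of \cite{bando1990bubbling,BKN}; both displayed inequalities will be extracted from one and the same recursion, the first by summing it and the second by solving it. Working with the fixed exponent $p\in[p_0,p_1]$, I would first derive a Caccioppoli-type inequality for $u^{p/2}$: testing $\triangle u\ge -fu$ against $u^{p-1}\phi^2$ for a Lipschitz cutoff $\phi$ supported in $A_{r_0,r_\infty}$, integrating by parts, and applying Young's inequality (legitimate because $p>\gamma>1$, so that $u^{p-1}\phi^2\ge 0$ and the good term $\int u^{p-2}\phi^2|\nabla u|^2$ enters with a positive sign) gives, after rewriting $u^{p-2}|\nabla u|^2$ in terms of $|\nabla u^{p/2}|^2$,
\[
\int\phi^2\,|\nabla u^{p/2}|^2\,dV_g\ \le\ C(p)\int u^p\,|\nabla\phi|^2\,dV_g\ +\ C(p)\int f\,(u^{p/2}\phi)^2\,dV_g .
\]
In the last term one uses H\"older's inequality (since $f\in L^{n/2}$) followed by the Sobolev inequality of constant $S$ applied to $u^{p/2}\phi$, so that as soon as $\int_{A_{r_1,2r_2}}f^{n/2}\le\varepsilon_2$ with $\varepsilon_2=\varepsilon_2(S,p_1)$ small it is absorbed into the left-hand side, leaving the clean bound $\int\phi^2|\nabla u^{p/2}|^2\le C(S,p)\int u^p|\nabla\phi|^2$. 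The threshold $\varepsilon_2$ is forced to depend on $p_1$ because this substitution inflates the effective potential to size of order $p\,f$; this is also why one argues directly with the test function instead of asserting that $u^{p/2}$ is a subsolution, which breaks down when $\gamma<p<2$.

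Next I would set up the dyadic iteration. Fix $\rho_0$ so that $\rho_{k_1}:=2^{k_1}\rho_0=r_1$ and $\rho_{k_2}:=2^{k_2}\rho_0=r_2$, write $A_k=A_{\rho_k,2\rho_k}$ and $E_k=\int_{A_k}u^p\,dV_g$. Choosing in the Caccioppoli estimate cutoffs adapted to three consecutive annuli ($\phi\equiv1$ on $A_k$, $\mathrm{supp}\,\phi\subset A_{k-1}\cup A_k\cup A_{k+1}$, $|\nabla\phi|\le C/\rho_k$), combining the resulting gradient bounds at the levels $k-1,k,k+1$ with the Sobolev inequality applied to $u^{p/2}\psi$, and then with H\"older's inequality on $A_k$ to pass from the $L^{p\gamma}$-norm back to the $L^p$-norm via $\mathrm{vol}(A_k)\le V$ (exactly the mechanism of Lemma~\ref{ellestlem1}), one should reach a recursion of the form $E_k\le\theta\,(E_{k-1}+E_{k+1})$ for $k_1<k<k_2$, with a constant $\theta=\theta(S,V,p_0)<\tfrac12$. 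Producing $\theta$ \emph{strictly} below $\tfrac12$ --- not merely finite --- is the heart of the matter and the step I expect to be the main obstacle: it is precisely here that the smallness of $\|f\|_{L^{n/2}}$, the Sobolev constant $S$, and the volume bound $V$ must be balanced against one another, possibly shrinking $\varepsilon_2$ once more and, on each annulus, removing the ``constant part'' of $u^{p/2}$ by a Poincar\'e-type estimate so that it is only the oscillation of $u^{p/2}$ that is allowed to propagate across the recursion.

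Finally, a scalar sequence with $E_k\le\theta(E_{k-1}+E_{k+1})$ and $\theta<\tfrac12$ obeys a discrete maximum principle: on $k_1\le k\le k_2$ it is dominated by $C\,(E_{k_1}+E_{k_2})\,\mu^{\min(k-k_1,\,k_2-k)}$, where $\mu\in(0,1)$ is the smaller root of $\mu^2-\theta^{-1}\mu+1=0$. Since $A_{2r_1,r_2}$ is covered by $A_{k_1+1},\dots,A_{k_2-1}$, summing this bound and using $A_{k_1}=A_{r_1,2r_1}$, $A_{k_2}=A_{r_2,2r_2}$ yields the first displayed inequality with $C_2=C_2(S,V,p_0)$. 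For the second, one moreover propagates the recursion toward the two ends of $A_{r_0,r_\infty}$ (which is what the standing $L^{n/2}$-smallness of $f$ on the full annular region permits in the applications); the geometric factor accumulated over the $\log_2(r_1/r_0)$ (resp.\ $\log_2(r_\infty/r_2)$) dyadic steps separating the inner (resp.\ outer) collar from the boundary becomes $(r_0/r_1)^{\varepsilon_3}$ (resp.\ $(r_2/r_\infty)^{\varepsilon_3}$) with $\varepsilon_3=-\log_2\mu=\varepsilon_3(S,V,p_0)$, and bounding the two collar integrals by $\int_{A_{r_0,r_\infty}}u^p$ closes the estimate.
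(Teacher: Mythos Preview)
The paper does not supply its own proof of this lemma; it is quoted from \cite{bando1990bubbling}. Your overall architecture (Caccioppoli for $u^{p/2}$, absorb the $f$--term via Sobolev and smallness of $\|f\|_{L^{n/2}}$, then a dyadic annulus iteration) is indeed Bando's framework. But the specific mechanism you propose has a real gap at exactly the point you flag.

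The chain ``Caccioppoli at exponent $p$ $\Rightarrow$ Sobolev $\Rightarrow$ H\"older via $\mathrm{vol}(A_k)$'' yields only
\[
E_k \le C(S,V,p)\,(E_{k-1}+E_{k+1}),
\]
with a constant that is some fixed number depending on $S,V,p$ and has no reason to be below $\tfrac12$; the H\"older step is sharp on $u=r^{2-n}$, and the Sobolev--Caccioppoli constants are geometric, so nothing in your argument forces $\theta<\tfrac12$. Your proposed Poincar\'e fix (``remove the constant part of $u^{p/2}$'') does not close this: subtracting a mean controls $\int_{A_k}(v-\bar v_k)^2$ by neighbours, but the term $\bar v_k^2\,\mathrm{vol}(A_k)$ is not bounded by $E_{k\pm1}$ in general, so you have shifted the problem rather than solved it.

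The clean repair --- and where the hypothesis $p_0>\gamma$ actually enters --- is to run the Caccioppoli step at exponent $q=p/\gamma>1$ rather than at $p$. With the single cutoff $\phi\equiv1$ on $A_{2r_1,r_2}$, supported in $A_{r_1,2r_2}$, Caccioppoli plus Sobolev give
\[
\Big(\int_{A_{2r_1,r_2}} u^{q\gamma}\Big)^{1/\gamma}
=\Big(\int_{A_{2r_1,r_2}} u^{p}\Big)^{1/\gamma}
\le C\Big[r_1^{-2}\!\int_{A_{r_1,2r_1}}\!u^{p/\gamma}+r_2^{-2}\!\int_{A_{r_2,2r_2}}\!u^{p/\gamma}\Big],
\]
and H\"older on the two \emph{boundary} collars (using $\mathrm{vol}(A_{r,2r})\le Vr^n$, the scale--invariant form of the volume bound) turns each $\int u^{p/\gamma}$ into $r_j^{2}(\int u^p)^{1/\gamma}$. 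Raising to the power $\gamma$ gives the first displayed inequality with a finite $C_2=C_2(S,V,p_0)$ --- no smallness required. The second inequality then follows by hole--filling: the first conclusion rewrites as $I_{\mathrm{middle}}\le \frac{C_2}{C_2+1}\,I_{\mathrm{total}}$, and iterating this from $A_{r_0,r_\infty}$ inward (which is legitimate because, as you note, in the application $\|f\|_{L^{n/2}}$ is small on every sub--annulus) produces the geometric decay with $\varepsilon_3=-\log_2\!\big(\tfrac{C_2}{C_2+1}\big)$. So the contraction constant you were hunting for lives in the hole--filling step, not in a three--term recursion.
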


\begin{lem}(\cite{bando1990bubbling})\label{c0normest0}
If $u\in L^{p}$ for some $p>1$, and if for some $q>n/2$
\begin{equation}\label{coeffdecay}
\int_{A_{r,8r}} f^{q}dV_g\leq C r^{-(2q-n)} ,
\end{equation}
with some constant $C>0$ for any $r$ such that $r_0\leq r<8r \leq r_\infty$, then we have
\begin{equation*}
\sup_{A_{2r,4r}}u^{p}\leq C_3 r^{-n}\int_{A_{r,8r}} u^{p}dV_g ,
\end{equation*}
where $C_3=C_3(C,S,V,p,q)>0$.
\end{lem}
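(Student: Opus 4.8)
The plan is to run a Moser iteration on the annular region $A_{r,8r}$, viewing $u$ as a nonnegative subsolution of the Schr\"odinger-type inequality $\triangle u\geq -fu$. Since the asserted estimate is scale-invariant — rescaling $A_{r,8r}$ to unit size multiplies $\int_{A_{r,8r}}u^p\,dV_g$ by $r^n$, which is exactly compensated by the prefactor $r^{-n}$ — the argument should be carried out with the powers of $r$ tracked throughout. First I would establish a Caccioppoli inequality: for $\beta\geq\frac12$ and a cut-off $\eta$ supported in $A_{r_0,r_\infty}$, testing $\triangle u\geq -fu$ against $u^{2\beta-1}\eta^2$ and integrating by parts yields
\[
\int|\nabla(u^\beta\eta)|^2\,dV_g\leq C\beta^2\int u^{2\beta}|\nabla\eta|^2\,dV_g+C\beta\int f\,u^{2\beta}\eta^2\,dV_g .
\]

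Next, writing $v=u^\beta\eta$ and invoking the Sobolev inequality $\big(\int v^{2\gamma}\,dV_g\big)^{1/\gamma}\leq S\int|\nabla v|^2\,dV_g$, the crux is to absorb the potential term $\int f\,v^2$. Here I would apply H\"older's inequality with exponents $q$ and $q'=q/(q-1)$ and invoke the hypothesis \eqref{coeffdecay}:
\[
\int f\,v^2\,dV_g\leq\Big(\int_{A_{r,8r}}f^q\,dV_g\Big)^{1/q}\,\|v\|_{L^{2q'}}^2\leq C^{1/q}\,r^{\,n/q-2}\,\|v\|_{L^{2q'}}^2 .
\]
Because $q>n/2$, one has $2q'<2\gamma$, so $\|v\|_{L^{2q'}}$ interpolates between $\|v\|_{L^2}$ and $\|v\|_{L^{2\gamma}}$ with interpolation exponent $1-n/(2q)$; the scaling factor $r^{\,n/q-2}$ is precisely the one for which Young's inequality then converts the last display into $\varepsilon\big(\int v^{2\gamma}\,dV_g\big)^{1/\gamma}+C(\varepsilon)\,r^{-2}\int v^2\,dV_g$. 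Absorbing the first term into the Sobolev term and taking $\eta$ to be a standard cut-off between concentric annuli produces a reverse-H\"older-type estimate of the shape
\[
\Big(\int_{A''}u^{2\beta\gamma}\,dV_g\Big)^{1/\gamma}\leq C\,\beta^2\,(\mathrm{width})^{-2}\int_{A'}u^{2\beta}\,dV_g ,
\]
valid for $\beta\geq\frac12$ and concentric annuli $A''\subset A'\subset A_{r,8r}$, with a constant depending only on $S,V,q$ and the constant in \eqref{coeffdecay}.

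I would then iterate this with exponents $p_k=p\gamma^k$ (hence $\beta_k=p_k/2\geq p/2>\frac12$) and a shrinking family of concentric annuli $A_{r,8r}=\Omega_0\supset\Omega_1\supset\cdots$ exhausting $A_{2r,4r}$ with $\mathrm{dist}(\partial\Omega_{k+1},\partial\Omega_k)\sim 2^{-k}r$. Composing the estimates and using that both $\sum_i\gamma^{-i}$ and $\sum_i i\gamma^{-i}$ converge, the accumulated product of constants telescopes to a finite $C_3=C_3(C,S,V,p,q)$ times the correct power of $r$; letting $k\to\infty$ and using $\|u\|_{L^{p_k}(A_{2r,4r})}\to\|u\|_{L^\infty(A_{2r,4r})}$ gives $\sup_{A_{2r,4r}}u^p\leq C_3\,r^{-n}\int_{A_{r,8r}}u^p\,dV_g$.

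The main obstacle is the second step, the treatment of $\int f\,v^2$: one must verify that the hypothesis $q>n/2$ is exactly strong enough so that $2q'<2\gamma$ and the interpolation leaves room to absorb into the Sobolev term, and that the explicit rate $r^{-(2q-n)}$ in \eqref{coeffdecay} is precisely the one cancelling the $r$-powers generated at each stage, so that no divergent factor accumulates over the iteration. The remaining ingredients — the Caccioppoli estimate, the bookkeeping of cut-offs, and the convergence of the iteration constants — are the standard Moser machinery; the only minor point to settle at the outset is whether to begin the iteration directly at $\beta_0=p/2$ (legitimate since $p>1$ makes the test exponent $2\beta_0-1=p-1$ positive) or to first bootstrap from $L^p$ to $L^2$ by Lemma \ref{ellestlem1}.
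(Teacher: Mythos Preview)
The paper does not give its own proof of this lemma; it is quoted verbatim from Bando's paper \cite{bando1990bubbling} and used as a black box in Section~\ref{treeidentity}. Your Moser iteration is exactly the argument behind the cited result, and the details you outline are correct: the condition $q>n/2$ is precisely what makes $2q'<2\gamma$ so that $\|v\|_{L^{2q'}}$ interpolates strictly between $\|v\|_{L^2}$ and $\|v\|_{L^{2\gamma}}$, and your computation that the factor $r^{n/q-2}$ combines with the interpolation exponent $1-\theta=1-n/(2q)$ to yield exactly $r^{-2}$ after Young's inequality is the heart of the matter. One small point worth making explicit in a write-up is that when you absorb $C\beta\,\varepsilon\,\|v\|_{L^{2\gamma}}^2$ you must take $\varepsilon\sim\beta^{-1}$, which produces an extra power $\beta^{\theta/(1-\theta)}$ in the constant at each step; this still iterates harmlessly since $\sum_k k\gamma^{-k}<\infty$, but it is the place where a careless reader might worry. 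Otherwise your plan is complete and matches the standard proof.
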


Now, we recall some vital results for the proof of Theorem \ref{degenockenergy}.
\begin{itemize}

\item[($\mathcal{V}_1$)]

By Theorem \ref{mainconvgethm}, there exist positive constants $C_7>0$ and $\varepsilon_5>0$ such that for $ 4r_j\leq r< 4r \leq r_\infty$ it holds that
\begin{equation*}\label{neckcurvturestmprop}
r^2|Rm(g_j)|\leq C_7 \max \left\{\left(\frac{r_j}{r}\right)^{\varepsilon_5},\left(\frac{r}{r_\infty}\right)^{\varepsilon_5}  \right\}.
\end{equation*}

\item[($\mathcal{V}_2$)]

 By Proposition \ref{properneckc2},
  $\forall  \ \varepsilon>0$, $\exists  \ R_{\varepsilon}>0,\delta_\varepsilon>0, j_\varepsilon>0$, such that if $R> R_{\varepsilon},\delta<\delta_\varepsilon$, $j>j_\varepsilon$, and $$A_{r,8r}(x_{a,j})\subset\bar{I}_j^{\mathfrak{K}}\subset A_{r_jR,\delta}(x_{a,j})\subset A_{r_j,r_\infty}(x_{a,j}),$$ then
\begin{equation}\label{ymenergysmall3.18}
\int_{A_{r,8r}(x_{a,j})}|F_{A_j}|^2 dV_{g_j}<\varepsilon^2.
\end{equation}

\end{itemize}

Now we start to prove Theorem \ref{degenockenergy}.
Firstly, we show that
for $A_{r,8r}(x_{a,j})\subset \bar{I}_j^{\mathfrak{K}},$ by choosing the constant $\varepsilon>0$ in (\ref{ymenergysmall3.18}) to be small enough, we have
\begin{equation}\label{connectionlpest}
\left(\int_{A_{2r,4r}(x_{a,j})}|F_{A_j}|^4 dV_{g_j}\right)^{1/2}<C_1r^{-2}\int_{A_{r,8r}(x_{a,j})}|F_{A_j}|^2 dV_{g_j},
\end{equation}
where $C_1>0$ is as  in Lemma \ref{ellestlem1}.

By applying Lemma \ref{ellestlem1} with $n=4$, $\gamma=2$. By assumption that there is no curvature concentration on $\bar{I}_j^{\mathfrak{K}}$, see ($\mathcal{V}_1$) and ($\mathcal{V}_2$) in the above. So
$$u=|F_{A_j}|\in L^2(\bar{I}_j^{\mathfrak{K}})$$ and
$$f=C_{ym}(|F_{A_j}|+|Rm(g_j)|)\in L^2(\bar{I}_j^{\mathfrak{K}}).$$
Moreover, it holds that for $A_{r,8r}(x_{a,j})\subset\bar{I}_j^{\mathfrak{K}}$, by choosing $\varepsilon>0$ small enough in (\ref{ymenergysmall3.18}), then
\begin{equation*}
\int_{A_{r,8r}(x_{a,j})} f^{2}dV_{g_j}<\varepsilon_1,
\end{equation*}
where $\varepsilon_1>0$ is as in Lemma \ref{ellestlem1}.
Therefore, the estimate \eqref{connectionlpest} follows by applying Lemma \ref{ellestlem1} to (\ref{ymcurvnormequ201}) which states that
\begin{eqnarray*}\label{ymcurvnormequ20}
\triangle_j|F_{A_j}|\geq
-C_{ym}|F_{A_j}|\left(| F_{A_j}|+|Rm(g_j)|\right),
\end{eqnarray*}
where $C_{ym}>0$ is the universal constant in Lemma \ref{keyymequ}.

Secondly, we show that
for $A_{r,8r}(x_{a,j})\subset \bar{I}_j^{\mathfrak{K}},$ by choosing the constant $\varepsilon>0$ in (\ref{ymenergysmall3.18}) to be small enough, we have
\begin{equation}\label{decayYM}
\int_{A_{2r,4r}(x_{a,j})} |F_{A_j}|^2dV_{g_j}\leq C_2 \max\left\{\left(\frac{e^{b_j^{\mathfrak{K}}}}{r}\right)^{\varepsilon_3}, \left(\frac{r}{e^{a_j^{\mathfrak{K}+1}}}\right)^{\varepsilon_3} \right\}\int_{\bar{I}_j^{\mathfrak{K}}} |F_{A_j}|^2dV_{g_j},
\end{equation}
where $C_2>0$ and $\varepsilon_3>0$ are as in Lemma \ref{ellestlem2}.

By Lemma \ref{keyymequ},
\begin{eqnarray*}
\triangle_j|F_{A_j}|^{1/2}
\geq-C_{ym}|F_{A_j}|^{1/2}\left(| F_{A_j}|+|Rm(g_j)|\right).
\end{eqnarray*}
As before, we know that $f=C_{ym}(| F_{A_j}|+|Rm(g_j)|)\in L^2(\bar{I}_j^{\mathfrak{K}}), $ and $u=|F_{A_j}|^{1/2}\in L^4(\bar{I}_j^{\mathfrak{K}}).$

By choosing the constant $\varepsilon>0$ in (\ref{ymenergysmall3.18}) small enough, we have
\begin{equation*}
\int_{A_{r,8r}(x_{a,j})} f^{2}dV_{g_j}\leq \varepsilon_2
\end{equation*}
where $\varepsilon_2>0$ is as in Lemma \ref{ellestlem2}.
Noticing that $p=4>\gamma=2$, we get the desired estimate \eqref{decayYM} by Lemma \ref{ellestlem2}.

Thirdly, we show that
for $A_{r,8r}(x_{a,j})\subset \bar{I}_j^{\mathfrak{K}}, $
\begin{equation}\label{keydecayYM}
\sup_{A_{2r,4r}(x_{a,j})}|F_{A_j}|^2\leq C_4 r^{-4}\max\left\{\left(\frac{e^{b_j^{\mathfrak{K}}}}{r}\right)^{\varepsilon_3}, \left(\frac{r}{e^{a_j^{\mathfrak{K}+1}}}\right)^{\varepsilon_3} \right\} \int_{\bar{I}_j^{\mathfrak{K}}} |F_{A_j}|^2dV_{g_j},
\end{equation}
where $C_4=C_4(S,V)>0$, and $\varepsilon_3>0$ is as in Lemma \ref{c0normest0}.

By \eqref{connectionlpest}, the two facts ($\mathcal{V}_1$) and ($\mathcal{V}_2$),
$f=C_{ym}(| F_{A_j}|+|Rm(g_j)|) $ satisfies (\ref{coeffdecay}) with $q=4$ for some universal constant $C>0$, and
$$u=|F_{A_j}|\in L^2 \left(\bar{I}_j^{\mathfrak{K}} \right).$$
Then by Lemma \ref{c0normest0}, \eqref{decayYM}, and the differential inequality (\ref{ymcurvnormequ201}) which states that
\begin{eqnarray*}\label{ymcurvnormequ20}
\triangle_j|F_{A_j}|\geq
-C_{ym}|F_{A_j}|\left(| F_{A_j}|+|Rm(g_j)|\right),
\end{eqnarray*}
we can obtain the desired decay estimate \eqref{keydecayYM}.

Finally, by applying \eqref{keydecayYM}, we can prove Theorem \ref{degenockenergy}. Then by previous arguments, Theorem \ref{degeymenergyintr} follows immediately. This completes the proof of Theorem \ref{degeymenergyintr}.
\end{proof}

Now we state a higher dimensional analog of Theorem \ref{degeymenergyintr}.

\begin{thm}\label{degeymenergyhd} Let $n>4$. Let $(M_j,g_j)$ be a sequence of (real) $n$ dimensional Einstein manifolds with uniformly bounded Einstein constants and diameters, uniform $L^{\frac{n}{2}}$ Riemannian curvature bounds, and positive uniform volume lower bounds. Let $E_j$ be a sequence of vector bundle over $M_j$, and $A_j$ a sequence of Yang-Mills connection in $E_j$ with uniform $L^{\frac{n}{2}}$ Yang-Mills curvature bounds. Then up to a subsequence, we have the following identity:

\begin{eqnarray*}
\lim_{j\rightarrow \infty}\int_{M_j}|F_{A_j}|^{\frac{n}{2}} dV_{g_j} &=& \int_{M_\infty}|F_{A_\infty}|^{\frac{n}{2}}dV_{g_\infty} +\sum_{l=1}^{v}\sum_{k=1}^{N_{b_l}}\mathcal{YM}_n(\mathbb{R}^n, A_{\omega_{b_l,k}})  \\
&& +\sum_{l=1}^{u}\sum_{\eta=\alpha,\beta,\gamma}\sum_{k=1}^{N_{a_l,\eta}} \mathcal{YM}_n(X_{\eta,k,a_l}, A_{\omega^{a_l,\eta,k}}),
\end{eqnarray*}
where $\mathcal{YM}_n(X, A)$ is the $L^{\frac{n}{2}}$ Yang-Mills energy of the connection $A$ over the underlying space $X$. Here $X_{\alpha,k,a_l}$ is $\mathbb{R}^n$, $X_{\beta,k,a_l}$ is a Ricci flat ALE bubble space and $X_{\gamma,k,a_l}$ is of the form $\mathbb{R}^n/\Gamma$ for some nontrivial finite group $\Gamma \subset SO(n)$.
\end{thm}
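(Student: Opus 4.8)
The plan is to rerun the proof of Theorem \ref{degeymenergyintr} from Section \ref{treeidentity}, replacing each four-dimensional ingredient by its $n$-dimensional counterpart. First one extracts a subsequence along which the underlying manifolds degenerate: under the extra uniform $L^{n/2}$ bound on $Rm(g_j)$, the compactness theory of Theorem \ref{mainconvgethm} (together with the remark following it in Section \ref{Preliminaries111}) extends to dimension $n$ by \cite{cheeger2015regularity,jiangnaber21,CJN2021} and the Einstein condition, so that $(M_j,g_j)$ converges, up to a subsequence, to an Einstein orbifold $(M_\infty,g_\infty)$ modulo a finite bubble tree $\mathbf{Tr_{ALE}}$ of complete Ricci-flat ALE bubble spaces, with the neck curvature decay estimate and the $L^{n/2}$ Riemannian curvature energy identity. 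Next, by Uhlenbeck's gauge-fixing and weak compactness, the higher-dimensional $\varepsilon$-regularity and removable-singularity theory for (stationary) Yang-Mills connections \cite{tian2000y,taotian,Uhlenbkato}, and the energy quantization theory of \cite{RiviequantYM,Naberyangenergy}, the connections $A_j$ converge, in the $W^{1,2}$-weak sense of Definition \ref{weakconverconnet}, to a Yang-Mills connection $A_\infty$ on a bundle $E_\infty$ over $M_\infty$ modulo bubble connections of the three types --- type $\alpha$ over $\mathbb{R}^n$, type $\beta$ over the Ricci-flat ALE bubble spaces of $\mathbf{Tr_{ALE}}$, and type $\gamma$ over $\mathbb{R}^n/\Gamma$. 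The $L^{n/2}$ energy gap on $\mathbb{R}^n$ and on $\mathbb{R}^n/\Gamma$ (the $n$-dimensional analog of Theorem \ref{energygap}), together with the finiteness of the number of ALE bubble spaces, shows the whole bubble tree is finite.

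As in the four-dimensional case, the crux is to show that there is no $L^{n/2}$ Yang-Mills energy loss on the degenerating neck regions. One first carries out the bubble-neck decomposition of the degenerate annular region as in Proposition \ref{properneckc2}, whose proof uses only the neck curvature estimate and abstract energy-concentration arguments and so goes through in dimension $n$; this reduces the problem to the $n$-dimensional analog of Theorem \ref{degenockenergy}, namely that $\lim_{\delta\to0}\lim_{R\to\infty}\lim_{j\to\infty}\int_{\bar{I}_j^{\mathfrak{K}}}|F_{A_j}|^{n/2}\,dV_{g_j}=0$ on each neck piece between consecutive $\gamma$-bubbles. Here one uses the Bochner--Weitzenb\"{o}ck inequality $\Delta_j|F_{A_j}|\ge -C_{ym}|F_{A_j}|(|F_{A_j}|+|Rm(g_j)|)$ of Lemma \ref{keyymequ}, which holds verbatim in dimension $n$, together with the Smith--Uhlenbeck refined Kato--Yau inequality (valid for all $n\ge 4$, cf. \cite{Uhlenbkato}) for fractional powers if needed. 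The key simplification over dimension four is that now $|F_{A_j}|\in L^{n/2}$ with $n/2>\gamma=\tfrac{n}{n-2}$ (this strict inequality is precisely $n>4$), so Lemma \ref{ellestlem1}, Lemma \ref{ellestlem2} and Lemma \ref{c0normest0} --- already stated there for general $n\ge 3$ with $\gamma=\tfrac{n}{n-2}$ and coefficient in $L^{n/2}$ --- apply directly to $u=|F_{A_j}|$ with exponent $p=n/2$ and $f=C_{ym}(|F_{A_j}|+|Rm(g_j)|)$, the required smallness $\int_{A_{r,8r}}f^{n/2}<\varepsilon_i$ being provided by the absence of Yang-Mills and Riemannian curvature concentration on the neck (facts ($\mathcal{V}_1$) and ($\mathcal{V}_2$)) and the uniform Sobolev constant of Remark \ref{sobolevrem}.

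Chaining these three lemmas: Lemma \ref{ellestlem1} upgrades $|F_{A_j}|$ to $L^{n\gamma/2}_{\mathrm{loc}}$ with the scale-invariant bound $(\int_{A_{2r,4r}}|F_{A_j}|^{n\gamma/2})^{1/\gamma}\le Cr^{-2}\int_{A_{r,8r}}|F_{A_j}|^{n/2}$, which, combined with the neck curvature estimate, verifies the coefficient decay hypothesis \eqref{coeffdecay} for $f$ with the exponent $q=n\gamma/2>n/2$. Lemma \ref{ellestlem2} then yields exponential decay of $\int_{A_{2r,4r}}|F_{A_j}|^{n/2}$ away from the two ends of the neck, and feeding this into Lemma \ref{c0normest0} gives a pointwise bound of the form $\sup_{A_{2r,4r}}|F_{A_j}|^{n/2}\le Cr^{-n}\max\{(e^{b_j^{\mathfrak{K}}}/r)^{\varepsilon_3},(r/e^{a_j^{\mathfrak{K}+1}})^{\varepsilon_3}\}\int_{\bar{I}_j^{\mathfrak{K}}}|F_{A_j}|^{n/2}$; integrating over the neck (whose volume at scale $r$ is comparable to $r^n$) and summing the geometric series over dyadic annuli shows the neck energy tends to zero. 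Combined with the absence of energy loss near the $\alpha$- and $\gamma$-bubbles (the argument of Theorem \ref{noneckthm2}, whose flat-space input in dimension $n$ is the Rivi\`ere--Naber--Valtorta quantization), all of the $L^{n/2}$ Yang-Mills energy is accounted for by $M_\infty$ and the three families of bubble connections, which is the asserted identity.

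The main obstacle is not the neck estimates --- these are, if anything, cleaner than in dimension four because of the strict inequality $n/2>\gamma$ --- but the two structural inputs in their full strength: (i) that the degeneration of the non-collapsed Einstein $n$-manifolds under only an $L^{n/2}$ curvature bound genuinely produces an orbifold limit with a finite tree of Ricci-flat ALE bubbles, so that in particular the Riemannian curvature (and hence, via $\varepsilon$-regularity, the Yang-Mills) concentration takes place along a discrete set and the point-bubbling picture of Section \ref{treeidentity} is available; and (ii) the corresponding isolated-point structure of the Yang-Mills energy-concentration set, for which one appeals to the stratification of the defect measure of a stationary Yang-Mills connection \cite{Naberyangenergy,tian2000y,taotian} on the smooth, bounded-geometry parts of $M_\infty$ and of the ALE bubbles. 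A secondary, purely technical point is to pin down the admissible exponent $q$ in \eqref{coeffdecay}; the choice $q=n\gamma/2$ works, but the bookkeeping must be done carefully since we now track $|F_{A_j}|^{n/2}$ rather than $|F_{A_j}|^2$.
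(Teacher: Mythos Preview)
Your proposal is correct and follows essentially the same approach as the paper's own argument (given in Remark \ref{hgdimrem}): rerun the four-dimensional proof, noting that since $p=n/2>\gamma=\tfrac{n}{n-2}$ for $n>4$, the three elliptic Lemmas \ref{ellestlem1}--\ref{c0normest0} apply directly to $u=|F_{A_j}|$ via the first inequality of Lemma \ref{keyymequ}, so the fractional-power/Kato step is no longer needed. Your write-up is in fact more detailed than the paper's sketch, and your caveats about the structural inputs (orbifold limit under $L^{n/2}$ curvature bounds, discreteness of the concentration set) are fair points that the paper simply takes for granted.
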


\begin{rem}\label{hgdimrem}
Notice that bubble tree convergence result similar as in Theorem \ref{mainconvgethm} hold for higher dimensional Einstein manifolds. Since
\begin{eqnarray*}
(| F_{A_j}|+|Rm(g_j)|)&\in& L^{n/2} \left(\bar{I}_j^{\mathfrak{K}} \right)\\
| F_{A_j}|\in L^{p} \left(\bar{I}_j^{\mathfrak{K}} \right),\quad p=n/2&>&\frac{n}{n-2}\quad \text{when} \,\, n>4,
\end{eqnarray*}
by applying Lemma \ref{ellestlem1}, Lemma \ref{ellestlem2} and Lemma \ref{c0normest0} to the inequality
\begin{eqnarray*}
\triangle_j|F_{A_j}|\geq
-C_{ym}|F_{A_j}|\left(| F_{A_j}|+|Rm(g_j)|\right)
\end{eqnarray*}
which is valid in higher dimensions $n>4$, we can get a similar estimate for Yang-Mills curvatures of \eqref{keydecayYM} in the higher dimensional case, therefore Theorem \ref{degeymenergyhd} holds. Then a result about quantization of Euler numbers of holomorphic vector bundles over a sequence of higher dimensional K\"{a}hler-Einstein manifolds analogous to
Theorem \ref{appli33} follows immediately.
\end{rem}

\vskip10pt

%\noindent {\bf Data availability:} The manuscript has no associated data.

% \vskip10pt

%\noindent {\bf Conflicts of interests: }  The authors have no conflicts of interests to disclose.

\
%\bibliographystyle{plain}
%\bibliography{foo}

\begin{thebibliography}{100}

\bibitem{AGGfib}
Bernhard Albach, Hansj\"{o}rg Geiges, and Hansj\"{o}rg Geiges.
\newblock Surfaces of section for {S}eifert fibrations.
\newblock {\em Arnold Math. J.}, 7(4):573--597, 2021.

\bibitem{anderson1989ricci}
Michael~T. Anderson.
\newblock Ricci curvature bounds and einstein metrics on compact manifolds.
\newblock {\em Journal of the American Mathematical Society}, 2(3):455--490,
  1989.

\bibitem{Anderson1992}
Michael~T. Anderson.
\newblock The ${L}^2$ structure of moduli spaces of einstein metrics on
  4-manifolds.
\newblock {\em Geom. Funct. Anal.}, 2(1):29--89, 1992.

\bibitem{Anderson94icm}
Michael~T. Anderson.
\newblock Einstein metrics and metrics with bounds on {R}icci curvature.
\newblock In {\em Proceedings of the {I}nternational {C}ongress of
  {M}athematicians, {V}ol. 1, 2 ({Z}\"{u}rich, 1994)}, pages 443--452.
  Birkh\"{a}user, Basel, 1995.

\bibitem{AHDM1978}
M.~F. Atiyah, N.~J. Hitchin, V.~G. Drinfeld, and Yu.~I. Manin.
\newblock Construction of instantons.
\newblock {\em Physics Letters A}, 65(3):185--187, 1978.

\bibitem{AtiyahSelf}
M.~F. Atiyah, N.~J. Hitchin, and I.~M. Singer.
\newblock Self-duality in four-dimensional {R}iemannian geometry.
\newblock {\em Proc. Roy. Soc. London Ser. A}, 362(1711):425--461, 1978.

\bibitem{AubinEinstein76}
Thierry Aubin.
\newblock \'{E}quations du type {M}onge-{A}mp\`ere sur les vari\'{e}t\'{e}s
  k\"{a}hleriennes compactes.
\newblock {\em C. R. Acad. Sci. Paris S\'{e}r. A-B}, 283(3):Aiii, A119--A121,
  1976.

\bibitem{bando1990bubbling}
Shigetoshi Bando.
\newblock Bubbling out of einstein manifolds.
\newblock {\em Tohoku Mathematical Journal, Second Series}, 42(2):205--216,
  1990.

\bibitem{BKN}
Shigetoshi Bando, Atsushi Kasue, and Hiraku Nakajima.
\newblock On a construction of coordinates at infinity on manifolds with fast
  curvature decay and maximal volume growth.
\newblock {\em Invent. Math.}, 97(2):313--349, 1989.

\bibitem{BBJ2021}
Robert Berman, S\'{e}bastien Boucksom, and Mattias Jonsson.
\newblock A variational approach to the {Y}au--{T}ian--{D}onaldson conjecture.
\newblock {\em J. Amer. Math. Soc.}, 34(3):605--652, 2021.

\bibitem{BlacheChern96}
Raimund Blache.
\newblock Chern classes and {H}irzebruch-{R}iemann-{R}och theorem for coherent
  sheaves on complex-projective orbifolds with isolated singularities.
\newblock {\em Math. Z.}, 222(1):7--57, 1996.

\bibitem{Bouck2}
S\'{e}bastien Boucksom.
\newblock On the volume of a line bundle.
\newblock {\em Internat. J. Math.}, 13(10):1043--1063, 2002.

\bibitem{Bouck4}
S\'{e}bastien Boucksom.
\newblock Divisorial {Z}ariski decompositions on compact complex manifolds.
\newblock {\em Ann. Sci. \'{E}cole Norm. Sup. (4)}, 37(1):45--76, 2004.

\bibitem{BourgLawsonYM}
Jean-Pierre Bourguignon and H.~Blaine Lawson.
\newblock Stability and isolation phenomena for {Y}ang-{M}ills fields.
\newblock {\em Comm. Math. Phys.}, 79(2):189--230, 1981.

\bibitem{holobundle96book}
Vasile Br\^{\i}nz\u{a}nescu.
\newblock {\em Holomorphic vector bundles over compact complex surfaces},
  volume 1624 of {\em Lecture Notes in Mathematics}.
\newblock Springer-Verlag, Berlin, 1996.

\bibitem{Icecream}
A.~Buckley, M.~Reid, and S.~Zhou.
\newblock Ice cream and orbifold {R}iemann-{R}och.
\newblock {\em Izv. Ross. Akad. Nauk Ser. Mat.}, 77(3):29--54, 2013.

\bibitem{caoeinstein85}
Huai~Dong Cao.
\newblock Deformation of {K}\"{a}hler metrics to {K}\"{a}hler-{E}instein
  metrics on compact {K}\"{a}hler manifolds.
\newblock {\em Invent. Math.}, 81(2):359--372, 1985.

\bibitem{Cheeger2003gafa}
J.~Cheeger.
\newblock Integral bounds on curvature elliptic estimates and rectifiability of
  singular sets.
\newblock {\em Geom. Funct. Anal.}, 13(1):20--72, 2003.

\bibitem{cheegercoldingJdg1}
Jeff Cheeger and Tobias~H. Colding.
\newblock On the structure of spaces with {R}icci curvature bounded below. {I}.
\newblock {\em J. Differential Geom.}, 46(3):406--480, 1997.

\bibitem{cheegercoldingJdg2}
Jeff Cheeger and Tobias~H. Colding.
\newblock On the structure of spaces with {R}icci curvature bounded below.
  {II}.
\newblock {\em J. Differential Geom.}, 54(1):13--35, 2000.

\bibitem{cheegercoldingJdg3}
Jeff Cheeger and Tobias~H. Colding.
\newblock On the structure of spaces with {R}icci curvature bounded below.
  {III}.
\newblock {\em J. Differential Geom.}, 54(1):37--74, 2000.

\bibitem{CCT2002}
Jeff Cheeger, Tobias~H. Colding, and Gang Tian.
\newblock On the singularities of spaces with bounded {R}icci curvature.
\newblock {\em Geom. Funct. Anal.}, 12(5):873--914, 2002.

\bibitem{CJN2021}
Jeff Cheeger, Wenshuai Jiang, and Aaron Naber.
\newblock Rectifiability of singular sets of noncollapsed limit spaces with
  {R}icci curvature bounded below.
\newblock {\em Ann. of Math. (2)}, 193(2):407--538, 2021.

\bibitem{cheeger2015regularity}
Jeff Cheeger and Aaron Naber.
\newblock Regularity of einstein manifolds and the codimension 4 conjecture.
\newblock {\em Annals of Mathematics}, 182(3):1093--1165, 2015.

\bibitem{ct05cmp}
Jeff Cheeger and Gang Tian.
\newblock Anti-self-duality of curvature and degeneration of metrics with
  special holonomy.
\newblock {\em Comm. Math. Phys.}, 255(2):391--417, 2005.

\bibitem{Jeff2006Curvature}
Jeff Cheeger and Gang Tian.
\newblock Curvature and injectivity radius estimates for einstein 4-manifolds.
\newblock {\em J.Amer.Math.Soc}, 19(2):487--525, 2006.

\bibitem{ChCh1}
Gao Chen and Xiuxiong Chen.
\newblock Gravitational instantons with faster than quadratic curvature decay.
  {I}.
\newblock {\em Acta Math.}, 227(2):263--307, 2021.

\bibitem{Chen1998Con}
Jingyi Chen.
\newblock Convergence of anti-self-dual connections on su(n)-bundles over
  product of two riemann surfaces.
\newblock {\em Communications in Mathematical Physics}, 196(3):571--590, 1998.

\bibitem{ChenComplex}
Jingyi Chen.
\newblock Complex anti-self-dual connections on a product of calabi-yau
  surfaces and triholomorphic curves.
\newblock {\em Communications in Mathematical Physics}, 201(1):217--247, 1999.

\bibitem{ChTs-BMY}
Jiun-Cheng Chen and Hsian-Hua Tseng.
\newblock On the {B}ogomolov-{M}iyaoka-{Y}au inequality for stacky surfaces.
\newblock {\em Taiwanese J. Math.}, 24(4):841--853, 2020.

\bibitem{2013Volume}
X.-X. Chen and S.~K. Donaldson.
\newblock Volume estimates for {K}\"{a}hler-{E}instein metrics: the
  three-dimensional case.
\newblock {\em J. Differential Geom.}, 93(2):175--189, 2013.

\bibitem{CDS1}
Xiuxiong Chen, Simon Donaldson, and Song Sun.
\newblock K\"{a}hler-{E}instein metrics on {F}ano manifolds. {I}:
  {A}pproximation of metrics with cone singularities.
\newblock {\em J. Amer. Math. Soc.}, 28(1):183--197, 2015.

\bibitem{CDS2}
Xiuxiong Chen, Simon Donaldson, and Song Sun.
\newblock K\"{a}hler-{E}instein metrics on {F}ano manifolds. {II}: {L}imits
  with cone angle less than {$2\pi$}.
\newblock {\em J. Amer. Math. Soc.}, 28(1):199--234, 2015.

\bibitem{CDS3}
Xiuxiong Chen, Simon Donaldson, and Song Sun.
\newblock K\"{a}hler-{E}instein metrics on {F}ano manifolds. {III}: {L}imits as
  cone angle approaches {$2\pi$} and completion of the main proof.
\newblock {\em J. Amer. Math. Soc.}, 28(1):235--278, 2015.

\bibitem{ChenWang12fano}
Xiuxiong Chen and Bing Wang.
\newblock The {K}\"{a}hler {R}icci flow on {F}ano manifolds ({I}).
\newblock {\em J. Eur. Math. Soc. (JEMS)}, 14(6):2001--2038, 2012.

\bibitem{CZdegebiharm}
Youmin Chen and Miaomiao Zhu.
\newblock Quantization for biharmonic maps from non-collapsed degenerating
  {E}instein 4-manifolds. To appear in
\newblock {\em  American Journal of Mathematics}, 2024.

\bibitem{SuvainaALE}
Ioana \c{S}uvaina.
\newblock A{LE} {R}icci-flat {K}\"{a}hler metrics and deformations of quotient
  surface singularities.
\newblock {\em Ann. Global Anal. Geom.}, 41(1):109--123, 2012.

\bibitem{ASDK32021}
Ved Datar, Adam Jacob, and Yuguang Zhang.
\newblock Adiabatic limits of anti-self-dual connections on collapsed {$K3$}
  surfaces.
\newblock {\em J. Differential Geom.}, 118(2):223--296, 2021.

\bibitem{2022Computing}
O.~Debarre, D.~Huybrechts, E.~Macr\`{i}, and C.~Voisin.
\newblock Computing {R}iemann-{R}och polynomials and classifying
  hyper-{K}\"{a}hler fourfolds.
\newblock {\em J. Amer. Math. Soc}, 37:151--185, 2024.

\bibitem{gaugefourtopology}
S.~K. Donaldson.
\newblock An application of gauge theory to four-dimensional topology.
\newblock {\em J. Differential Geom.}, 18(2):279--315, 1983.

\bibitem{1985Anti}
S.~K. Donaldson.
\newblock Anti self-dual {Y}ang-{M}ills connections over complex algebraic
  surfaces and stable vector bundles.
\newblock {\em Proc. London Math. Soc. (3)}, 50(1):1--26, 1985.

\bibitem{Donaldson87bundle}
S.~K. Donaldson.
\newblock Infinite determinants, stable bundles and curvature.
\newblock {\em Duke Math. J.}, 54(1):231--247, 1987.

\bibitem{Donaldson2002Scalar}
S.~K. Donaldson.
\newblock Scalar curvature and stability of toric varieties.
\newblock {\em J Differential Geom}, 62(2):289--349, 2002.

\bibitem{geofourfolds}
S.~K. Donaldson and P.~B. Kronheimer.
\newblock {\em The geometry of four-manifolds}.
\newblock Oxford Mathematical Monographs. The Clarendon Press, Oxford
  University Press, New York, 1990.
\newblock Oxford Science Publications.

\bibitem{DS1989}
S.~K. Donaldson and D.~P. Sullivan.
\newblock Quasiconformal 4-manifolds.
\newblock {\em Acta Math.}, 163(3-4):181--252, 1989.

\bibitem{DonaldsonSunacta}
Simon Donaldson and Song Sun.
\newblock Gromov-{H}ausdorff limits of {K}\"{a}hler manifolds and algebraic
  geometry.
\newblock {\em Acta Math.}, 213(1):63--106, 2014.

\bibitem{DonaldsonSunjdg}
Simon Donaldson and Song Sun.
\newblock Gromov-{H}ausdorff limits of {K}\"{a}hler manifolds and algebraic
  geometry, {II}.
\newblock {\em J. Differential Geom.}, 107(2):327--371, 2017.

\bibitem{vanishbook92}
H\'{e}l\`ene Esnault and Eckart Viehweg.
\newblock {\em Lectures on vanishing theorems}, volume~20 of {\em DMV Seminar}.
\newblock Birkh\"{a}user Verlag, Basel, 1992.

\bibitem{EGZ09jams}
Philippe Eyssidieux, Vincent Guedj, and Ahmed Zeriahi.
\newblock Singular {K}\"{a}hler-{E}instein metrics.
\newblock {\em J. Amer. Math. Soc.}, 22(3):607--639, 2009.

\bibitem{1986Compactness}
R.~Fintushel and T.~Lawson.
\newblock Compactness of moduli spaces for orbifold instantons.
\newblock {\em Topology and its Applications}, 23(3):305--312, 1986.

\bibitem{freeorbifolds}
Ronald Fintushel and Ronald~J. Stern.
\newblock Pseudofree orbifolds.
\newblock {\em Ann. of Math. (2)}, 122(2):335--364, 1985.

\bibitem{Instanton4manifolds}
Daniel~S. Freed and Karen~K. Uhlenbeck.
\newblock {\em Instantons and four-manifolds}, volume~1 of {\em Mathematical
  Sciences Research Institute Publications}.
\newblock Springer-Verlag, New York, second edition, 1991.

\bibitem{FukayaAsd98}
K.~Fukaya.
\newblock Anti-self-dual equation on {$4$}-manifolds with degenerate metric.
\newblock {\em Geom. Funct. Anal.}, 8(3):466--528, 1998.

\bibitem{2010An}
C.~Gerhardt.
\newblock An energy gap for yang-mills connections.
\newblock {\em Communications in Mathematical Physics}, 298(2):515--522, 2010.

\bibitem{GOMPF85}
Robert~E. Gompf.
\newblock An infinite set of exotic {${\bf R}^4$}'s.
\newblock {\em J. Differential Geom.}, 21(2):283--300, 1985.

\bibitem{decayParker}
David Groisser and Thomas~H. Parker.
\newblock Sharp decay estimates for {Y}ang-{M}ills fields.
\newblock {\em Comm. Anal. Geom.}, 5(3):439--474, 1997.

\bibitem{HPdel}
Paul Hacking and Yuri Prokhorov.
\newblock Smoothable del {P}ezzo surfaces with quotient singularities.
\newblock {\em Compos. Math.}, 146(1):169--192, 2010.

\bibitem{Hirzebruchbook1}
Friedrich Hirzebruch.
\newblock {\em Topological methods in algebraic geometry}.
\newblock Classics in Mathematics. Springer-Verlag, Berlin, 1995.
\newblock Translated from the German and Appendix One by R. L. E.
  Schwarzenberger, With a preface to the third English edition by the author
  and Schwarzenberger, Appendix Two by A. Borel, Reprint of the 1978 edition.

\bibitem{HHorbeuler}
Friedrich Hirzebruch and Thomas H\"{o}fer.
\newblock On the {E}uler number of an orbifold.
\newblock {\em Math. Ann.}, 286(1-3):255--260, 1990.

\bibitem{weigpro}
Timothy Hosgood.
\newblock An introduction to varieties in weighted projective space.
\newblock {\em arXiv:1604.02441v5}.

\bibitem{Huybrechts05book}
Daniel Huybrechts.
\newblock {\em Complex geometry}.
\newblock Universitext. Springer-Verlag, Berlin, 2005.
\newblock An introduction.

\bibitem{jiangnaber21}
Wenshuai Jiang and Aaron Naber.
\newblock {$L^2$} curvature bounds on manifolds with bounded {R}icci curvature.
\newblock {\em Ann. of Math. (2)}, 193(1):107--222, 2021.

\bibitem{Kawamata1988}
Y.~Kawamata.
\newblock Crepant blowing-up of 3-dimensional canonical singularities and its
  application to degenerations of surfaces.
\newblock {\em Ann. of Math}, 127(1):93--163, 1988.

\bibitem{HRRthmorbifold79}
Tetsuro Kawasaki.
\newblock The {R}iemann-{R}och theorem for complex {$V$}-manifolds.
\newblock {\em Osaka Math. J.}, 16(1):151--159, 1979.

\bibitem{Kodaira1953}
K.~Kodaira.
\newblock On a differential-geometric method in the theory of analytic stacks.
\newblock {\em Proc. Nat. Acad. Sci. U.S.A.}, 39:1268--1273, 1953.

\bibitem{Kronheimer89jdg}
P.~B. Kronheimer.
\newblock The construction of {ALE} spaces as hyper-{K}\"{a}hler quotients.
\newblock {\em J. Differential Geom.}, 29(3):665--683, 1989.

\bibitem{Kronheimer89jdg1}
P.~B. Kronheimer.
\newblock A {T}orelli-type theorem for gravitational instantons.
\newblock {\em J. Differential Geom.}, 29(3):685--697, 1989.

\bibitem{Kronheimer1990Yang}
Peter~B. Kronheimer and Hiraku Nakajima.
\newblock Yang-mills instantons on ale gravitational instantons.
\newblock {\em Mathematische Annalen}, 288(1):263--307, 1990.

\bibitem{Lorbeuler}
Adrian Langer.
\newblock Logarithmic orbifold {E}uler numbers of surfaces with applications.
\newblock {\em Proc. London Math. Soc. (3)}, 86(2):358--396, 2003.

\bibitem{spingeobook89}
H.~Blaine Lawson, Jr. and Marie-Louise Michelsohn.
\newblock {\em Spin geometry}, volume~38 of {\em Princeton Mathematical
  Series}.
\newblock Princeton University Press, Princeton, NJ, 1989.

\bibitem{Li-KStabEins}
Chi Li.
\newblock {$G$}-uniform stability and {K}\"{a}hler-{E}instein metrics on {F}ano
  varieties.
\newblock {\em Invent. Math.}, 227(2):661--744, 2022.

\bibitem{LWC-moduli}
Chi Li, Xiaowei Wang, and Chenyang Xu.
\newblock On the proper moduli spaces of smoothable {K}\"{a}hler-{E}instein
  {F}ano varieties.
\newblock {\em Duke Math. J.}, 168(8):1387--1459, 2019.

\bibitem{LimRota22}
Bronson Lim and Franco Rota.
\newblock Characteristic classes and stability conditions for projective
  {K}leinian orbisurfaces.
\newblock {\em Math. Z.}, 300(1):827--849, 2022.

\bibitem{LimRota23}
Bronson Lim and Franco Rota.
\newblock Riemann-{R}och coefficients for {K}leinian orbisurfaces.
\newblock {\em Boll. Unione Mat. Ital.}, 16(3):627--639, 2023.

\bibitem{liusSze2021cpam}
Gang Liu and G\'{a}bor Sz\'{e}kelyhidi.
\newblock Gromov-{H}ausdorff limits of {K}\"{a}hler manifolds with {R}icci
  curvature bounded below {II}.
\newblock {\em Comm. Pure Appl. Math.}, 74(5):909--931, 2021.

\bibitem{lzMem}
Lei Liu and Miaomiao Zhu.
\newblock Asymptotic analysis for {S}acks-{U}hlenbeck {$\alpha$}-harmonic maps
  from degenerating {R}iemann surfaces.
\newblock {\em Mem. Amer. Math. Soc.}, 303(1525):v+90, 2024.

\bibitem{LXZ-Kstab}
Yuchen Liu, Chenyang Xu, and Ziquan Zhuang.
\newblock Finite generation for valuations computing stability thresholds and
  applications to {K}-stability.
\newblock {\em Ann. of Math. (2)}, 196(2):507--566, 2022.

\bibitem{mumfordbook66}
David Mumford.
\newblock {\em Lectures on curves on an algebraic surface}.
\newblock Annals of Mathematics Studies, No. 59. Princeton University Press,
  Princeton, N.J., 1966.
\newblock With a section by G. M. Bergman.

\bibitem{Naberyangenergy}
Aaron Naber and Daniele Valtorta.
\newblock Energy identity for stationary {Y}ang {M}ills.
\newblock {\em Invent. Math.}, 216(3):847--925, 2019.

\bibitem{nakajima1988hausdorff}
Hiraku Nakajima.
\newblock Hausdorff convergence of einstein 4-manifolds.
\newblock {\em J. Fac. Sci. Univ. Tokyo}, 35:411--424, 1988.

\bibitem{Nakajima1990}
Hiraku Nakajima.
\newblock Moduli spaces of anti-self-dual connections on ale gravitational
  instantons.
\newblock {\em Inventiones Mathematicae}, 102(1):267--303, 1990.

\bibitem{nakajima1994convergence}
Hiraku Nakajima.
\newblock A convergence theorem for einstein metrics and the ale spaces.
\newblock {\em Translations of the American Mathematical Society-Series 2},
  160:79--94, 1994.

\bibitem{Od-moduli}
Yuji Odaka.
\newblock Compact moduli spaces of {K}\"{a}hler-{E}instein {F}ano varieties.
\newblock {\em Publ. Res. Inst. Math. Sci.}, 51(3):549--565, 2015.

\bibitem{sunjdg16}
Yuji Odaka, Cristiano Spotti, and Song Sun.
\newblock Compact moduli spaces of del {P}ezzo surfaces and
  {K}\"{a}hler-{E}instein metrics.
\newblock {\em J. Differential Geom.}, 102(1):127--172, 2016.

\bibitem{OW-top1}
Peter Orlik and Philip Wagreich.
\newblock Equivariant resolution of singularities with {$C^{\ast}$} action.
\newblock In {\em Proceedings of the {S}econd {C}onference on {C}ompact
  {T}ransformation {G}roups ({U}niv. {M}assachusetts, {A}mherst, {M}ass.,
  1971), {P}art {I}}, Lecture Notes in Math., Vol. 298, pages 270--290.
  Springer, Berlin-New York, 1972.

\bibitem{1982Gauge}
T.~H. Parker.
\newblock Gauge theories on four dimensional riemannian manifolds.
\newblock {\em Communications in Mathematical Physics}, 85(4):563--602, 1982.

\bibitem{parkerinvention92}
Thomas~H. Parker.
\newblock Nonminimal {Y}ang-{M}ills fields and dynamics.
\newblock {\em Invent. Math.}, 107(2):397--420, 1992.

\bibitem{ReidYPS}
Miles Reid.
\newblock Young person's guide to canonical singularities.
\newblock In {\em Algebraic geometry, {B}owdoin, 1985 ({B}runswick, {M}aine,
  1985)}, volume~46 of {\em Proc. Sympos. Pure Math.}, pages 345--414. Amer.
  Math. Soc., Providence, RI, 1987.

\bibitem{RiviequantYM}
Tristan Rivi\`ere.
\newblock Interpolation spaces and energy quantization for {Y}ang-{M}ills
  fields.
\newblock {\em Comm. Anal. Geom.}, 10(4):683--708, 2002.

\bibitem{SadunSegert92}
Lorenzo Sadun and Jan Segert.
\newblock Non-self-dual {Y}ang-{M}ills connections with quadrupole symmetry.
\newblock {\em Comm. Math. Phys.}, 145(2):363--391, 1992.

\bibitem{minimizingYM}
Steven Sedlacek.
\newblock A direct method for minimizing the {Y}ang-{M}ills functional over
  {$4$}-manifolds.
\newblock {\em Comm. Math. Phys.}, 86(4):515--527, 1982.

\bibitem{vanish85book}
Bernard Shiffman and Andrew~John Sommese.
\newblock {\em Vanishing theorems on complex manifolds}, volume~56 of {\em
  Progress in Mathematics}.
\newblock Birkh\"{a}user Boston, Inc., Boston, MA, 1985.

\bibitem{Uhlenbeck1989}
L.~M. Sibner, R.~J. Sibner, and K.~Uhlenbeck.
\newblock Solutions to {Y}ang-{M}ills equations that are not self-dual.
\newblock {\em Proc. Nat. Acad. Sci. U.S.A.}, 86(22):8610--8613, 1989.

\bibitem{Uhlenbkato}
Penny Smith and Karen Uhlenbeck.
\newblock Removeability of a codimension four singular set for solutions of a
  {Y}ang-{M}ills-{H}iggs equation with small energy.
\newblock In {\em Surveys in differential geometry 2019. {D}ifferential
  geometry, {C}alabi-{Y}au theory, and general relativity. {P}art 2}, volume~24
  of {\em Surv. Differ. Geom.}, pages 257--291. Int. Press, Boston, MA, [2022]
  \copyright 2022.

\bibitem{SS-moduli}
Cristiano Spotti and Song Sun.
\newblock Explicit {G}romov-{H}ausdorff compactifications of moduli spaces of
  {K}\"{a}hler-{E}instein {F}ano manifolds.
\newblock {\em Pure Appl. Math. Q.}, 13(3):477--515, 2017.

\bibitem{SSY-exis}
Cristiano Spotti, Song Sun, and Chengjian Yao.
\newblock Existence and deformations of {K}\"{a}hler-{E}instein metrics on
  smoothable {$\Bbb{Q}$}-{F}ano varieties.
\newblock {\em Duke Math. J.}, 165(16):3043--3083, 2016.

\bibitem{sze2013jams}
G\'{a}bor Sz\'{e}kelyhidi.
\newblock The partial {$C^0$}-estimate along the continuity method.
\newblock {\em J. Amer. Math. Soc.}, 29(2):537--560, 2016.

\bibitem{taotian}
Terence Tao and Gang Tian.
\newblock A singularity removal theorem for {Y}ang-{M}ills fields in higher
  dimensions.
\newblock {\em J. Amer. Math. Soc.}, 17(3):557--593, 2004.

\bibitem{Taubes1982Self}
Clifford~Henry Taubes.
\newblock Self-dual yang-mills connections on non-self-dual {4}-manifolds.
\newblock {\em Journal of Differential Geometry}, 17(1982):139--170, 1982.

\bibitem{Taubes1984Self}
Clifford~Henry Taubes.
\newblock Self-dual connections on {4}-manifolds with indefinite intersection
  matrix.
\newblock {\em Journal of Differential Geometry}, 19(1984):517--560, 1984.

\bibitem{Taubes1987jdg}
Clifford~Henry Taubes.
\newblock Gauge theory on asymptotically periodic {$4$}-manifolds.
\newblock {\em J. Differential Geom.}, 25(3):363--430, 1987.

\bibitem{Tian1990}
G.~Tian.
\newblock On {C}alabi's conjecture for complex surfaces with positive first
  chern class.
\newblock {\em Invent. Math.}, 101(1):101--172, 1990.

\bibitem{2016tzacta}
G.~Tian and Z.~Zhang.
\newblock Regularity of {K}\"{a}hler-{R}icci flows on fano manifolds.
\newblock {\em Acta Mathematica}, 216(1):127--176, 2016.

\bibitem{tian90icm}
Gang Tian.
\newblock K\"{a}hler-{E}instein metrics on algebraic manifolds.
\newblock In {\em Proceedings of the {I}nternational {C}ongress of
  {M}athematicians, {V}ol. {I}, {II} ({K}yoto, 1990)}, pages 587--598. Math.
  Soc. Japan, Tokyo, 1991.

\bibitem{tianinvt1997}
Gang Tian.
\newblock K\"{a}hler-{E}instein metrics with positive scalar curvature.
\newblock {\em Invent. Math.}, 130(1):1--37, 1997.

\bibitem{tian2000y}
Gang Tian.
\newblock Gauge theory and calibrated geometry. {I}.
\newblock {\em Ann. of Math. (2)}, 151(1):193--268, 2000.

\bibitem{tian13c0}
Gang Tian.
\newblock Partial {$C^0$}-estimate for {K}\"{a}hler-{E}instein metrics.
\newblock {\em Commun. Math. Stat.}, 1(2):105--113, 2013.

\bibitem{tiancpam15}
Gang Tian.
\newblock K-stability and {K}\"{a}hler-{E}instein metrics.
\newblock {\em Comm. Pure Appl. Math.}, 68(7):1085--1156, 2015.

\bibitem{tianyau87}
Gang Tian and Shing-Tung Yau.
\newblock K\"{a}hler-{E}instein metrics on complex surfaces with {$C_1>0$}.
\newblock {\em Comm. Math. Phys.}, 112(1):175--203, 1987.

\bibitem{UY1986}
K.~Uhlenbeck and S.-T. Yau.
\newblock On the existence of {H}ermitian-{Y}ang-{M}ills connections in stable
  vector bundles.
\newblock volume~39, pages S257--S293. 1986.
\newblock Frontiers of the mathematical sciences: 1985 (New York, 1985).

\bibitem{UhlenbeckLp}
Karen~K. Uhlenbeck.
\newblock Connections with {$L^{p}$} bounds on curvature.
\newblock {\em Comm. Math. Phys.}, 83(1):31--42, 1982.

\bibitem{UhlenbeckRemovablesingu}
Karen~K. Uhlenbeck.
\newblock Removable singularities in {Y}ang-{M}ills fields.
\newblock {\em Comm. Math. Phys.}, 83(1):11--29, 1982.

\bibitem{YMcompbook}
Katrin Wehrheim.
\newblock {\em Uhlenbeck compactness}.
\newblock EMS Series of Lectures in Mathematics. European Mathematical Society
  (EMS), Z\"{u}rich, 2004.

\bibitem{WrightALE}
Evan~P. Wright.
\newblock Quotients of gravitational instantons.
\newblock {\em Ann. Global Anal. Geom.}, 41(1):91--108, 2012.

\bibitem{yauopenprob}
S.-T. Yau.
\newblock Open problems in geometry.
\newblock Differential geometry: partial differential equations on manifolds
  (Los Angeles, CA, 1990), 1-28, Proc. Sympos. Pure Math., 54, Part 1, Amer.
  Math. Soc., Providence, RI, 1993.

\bibitem{yau78calabi}
Shing~Tung Yau.
\newblock On the {R}icci curvature of a compact {K}\"{a}hler manifold and the
  complex {M}onge-{A}mp\`ere equation. {I}.
\newblock {\em Comm. Pure Appl. Math.}, 31(3):339--411, 1978.

\bibitem{zhu2010harmonic}
Miaomiao Zhu.
\newblock Harmonic maps from degenerating {R}iemann surfaces.
\newblock {\em Math. Z.}, 264(1):63--85, 2010.

\end{thebibliography}

\end{document}